\newenvironment{customthm}[1]
  {\innercustomthm}
  {\endinnercustomthm}
\newenvironment{customprop}[1]
  {\innercustomprop}
  {\endinnercustomprop}
\newenvironment{customlem}[1]
  {\innercustomlem}
  {\endinnercustomlem}
\newcolumntype{L}[1]{>{\raggedright\let\newline\\\arraybackslash\hspace{0pt}}p{#1}}
\newcolumntype{C}[1]{>{\centering\let\newline\\\arraybackslash\hspace{0pt}}p{#1}}
\newcolumntype{R}[1]{>{\raggedleft\let\newline\\\arraybackslash\hspace{0pt}}p{#1}}
\newcommand{\PP}{{\mathbf{P}}}
\newcommand{\card}{\dim}
\newcommand{\llll}{\boldsymbol \lambda}
\newcommand{\yy}{\mbox{$\mathbf y$}}
\newcommand{\xx}{\mathbf x}
\newcommand{\ww}{\mathbf w}
\newcommand{\TT}{\mathbb T}
\newcommand{\ab}{\bm \varnothing}
\newcommand{\bLam}{\boldsymbol \Lambda}
\newcommand{\tV}{\tilde{V}}
\newcommand{\Real}{\mathbb{R}}
\newcommand{\eqd}{{\stackrel{d}{=}}}
\newcommand{\prob}{\mathbb{P}}
\newcommand{\qrob}{\mathbb{Q}}
\newcommand{\expe}{\mathbb{E}}
\newcommand{\KKo}{\mathbb{K}_{\textrm{o}}}
\newcommand{\KKa}{\mathbb{K}_{\textrm{a}}}
\newcommand{\mhomo}{$\mathcal{M}_{\textrm{s}}$}
\newcommand{\pcr}{\mathrm{FNP}}
\newcommand{\lpcr}{\mathrm{LFNR}}
\newcommand{\uti}{\mathrm{U}}
\newtheorem{theorem}{Theorem}
\newtheorem{fact}{Fact}
\newtheorem{definition}{Definition}
\newtheorem{example}{Example}
\newtheorem{lemma}{Lemma}
\newtheorem{proposition}{Proposition}
\newtheorem{remark}{Remark}
\newcommand\strG{\text{$*$} }
\newcommand\strGfull[1]{\text{$\textbf{A}{\mathbf P}_{#1}$}}
\newcommand\strA{\text{$\textbf{A}$} }
\newcommand{\spaceu}{\mathcal{S}_{\mathrm{u}}}
\newcommand{\spaceo}{\mathcal{S}_{\mathrm{o}}}
\newcommand{\lleq}{\preccurlyeq}
\newcommand{\TTp}{\TT^{*}}
\newcommand{\adset}{\mathcal{T}}
\newcommand{\fil}{\mathcal{F}}
\renewcommand{\algorithmicrequire}{\textbf{Input:}}
\renewcommand{\algorithmicensure}{\textbf{Output:}}
\numberwithin{lemma}{section}
\numberwithin{equation}{section}
\numberwithin{proposition}{section}
\numberwithin{remark}{section}
\newcommand{\ind}{\mathds{1}}
\let\hat\widehat
\let\tilde\widetilde
\newcommand{\VV}{\mathbf V}
\newcommand{\vv}{\mathbf v}
\newcommand{\uu}{\mathbf u}
\newcommand{\zz}{\mathbf z}
\title{Compound Sequential Change-point Detection in Parallel Data Streams}
\author{Yunxiao Chen\\
London School of Economics and Political Science\\
Xiaoou Li\\
University of Minnesota
}
\date{}
\begin{document}
\onehalfspacing
\maketitle

\abstract{
We consider sequential change-point detection in parallel data streams, where each stream has its own change point.
Once a change is detected in a data stream, this stream is deactivated permanently.  The goal is to
maximize the normal operation of the pre-change streams, while controlling the proportion of post-change streams among the active streams at all time points. Taking a Bayesian formulation, we develop a compound decision framework for this problem. A procedure is proposed that is uniformly optimal among all sequential procedures which control the expected proportion of post-change streams at all time points.
We also investigate the asymptotic behavior of the proposed method when the number of data streams grows large.
Numerical examples are provided to illustrate the use and performance of the proposed method.
}

\bigskip
\noindent {\bf Keywords:}
Sequential analysis; Change-point detection; Compound decision; False non-discovery rate; 
Large-scale inference

\section{Introduction}

Sequential change-point detection, which dates back to the pioneering work of \cite{page1954continuous,page1955test},
aims at the early detection of distributional changes in sequentially observed data. Methods for sequential change-point detection have received wide applications in various fields, including engineering, education, medical diagnostics, finance, among others, where a change point typically corresponds to a deviation of a data stream from its `normal' state. The classical methods for sequential change-point detection focus on the detection of one or multiple changes in a single data stream \citep{lorden1971procedures, page1954continuous,roberts1966comparison,shewhart1931economic,shiryaev1963optimum}. With the advances in information technology, large-scale streaming data become more common and
many recent developments tend to focus on change-point detection in multiple data streams \citep{chan2017optimal,chen2015graph,chen2019sequential,mei2010efficient,xie2013sequential,fellouris2016second}.


In this paper, we consider sequential change-point detection in multiple parallel data streams,
where each stream has its own change point. Once a change is detected in a data stream, this stream is deactivated permanently and its data are no longer collected. The goal is to
maximize the normal operation of the pre-change streams, while controlling the proportion of post-change streams among the active ones at all time points.
This problem is commonly encountered in the real world. One such example is the monitoring of item pool in standardized educational testing   \citep{choe2018sequential,cizek2016handbook,van2015bayesian,veerkamp2000detection}. In this application, each item corresponds to a data stream, for which data are collected sequentially from its use in test administrations over time.
A change point occurs when the item is leaked to future test takers. The goal is to detect and remove  changed items in an item pool that consists of hundreds or even thousands of items in a sequential fashion. Once a change point is detected for an item, test administrators would like to remove it from the item pool to ensure test fairness. On the other hand, it is important to maximize the usage of each item before its leakage, due to the cost of developing new items. There are many other applications, including multichannel spectrum sensing \citep{chen2020false} and credit card fraud detection \citep{dal2017credit}.

Despite its wide applications, this type of problems is rarely explored in the literature of multi-stream sequential change-point detection.
One exception is \cite{chen2020false}, who address a similar problem by proposing a sequential version of the Benjamini-Hochberg FDR control procedure \citep{benjamini1995controlling} for detecting and deactivating post-change data streams. However, no optimality theory is provided in \cite{chen2020false}.
The challenges of developing optimality theory lie in the compound nature of the FDR-type risk measure and the stochastic control component due to the deactivation of data streams.
 In this paper, we formulate the problem under a compound decision theory framework and propose an optimal change-point detection procedure. Our contributions are summarized below.

First,  we formulate this problem under a Bayesian sequential change-point detection setting, which generalizes the classical Bayesian setting for single-stream change-point detection \citep{lai2001sequential} to parallel streams. Moreover, we introduce new performance metrics, borrowing ideas from the compound decision theory for multiple hypothesis testing \citep{benjamini1995controlling,brown2009nonparametric,cai2019covariate,efron2016computer,efron2019bayes,genovese2002operating,sun2007oracle,zhang2003compound}.
Specifically, we propose to control a local False Non-discovery Rate (FNR) at each time point, defined as the expected proportion of post-change streams among the active ones under the current posterior measure. This metric adapts the FNR for multiple hypothesis testing \citep{genovese2002operating} to  parallel-stream change-point detection. In addition, we introduce a compound stream utilization measure that is closely related to the classical notion of average run length \citep{lorden1971procedures}. A compound sequential detection procedure involves a trade-off between  local FNR and stream utilization at each time point and our objective is to maximize stream utilization,
while controlling the local FNR to be below a pre-specified threshold all the time. {Comparing with classical performance metrics for individual streams,}
 the proposed metrics better evaluate the risk of sequential decision at an aggregate level and thus is more suitable for large-scale streaming data.

 Second, we propose a sequential decision procedure that can control local FNR under any pre-specified threshold. Under a class of Bayesian change-point models, it is shown that this procedure is uniformly optimal among all the sequential detection procedures under the same local FNR constraint, in the sense that the proposed procedure has the highest  stream utilization at any time. We emphasize that this is a non-asymptotic result that applies to any finite number of data streams. This result implies that this compound change-point detection problem is very special, in the sense that a myopic decision rule that maximizes the next-step stream utilization under the local FNR constraint is also uniformly optimal throughout time. Phenomenon of this kind  does not hold in general for  stochastic control problems \citep{howard1960dynamic}. The proof of this uniform optimality result is  non-trivial, for which new mathematical tools are developed,
including the construction of monotone coupling over a partially ordered space \citep{thorisson2000regeneration} for {comparing stochastic processes with different dimensions due to the deactivation step.}
Besides non-asymptotic optimality, asymptotic theory is also established to characterize the performance of the proposed method when the number of data streams grows large.

We point out that 
the current setting is substantially different from most of the existing works on multi-stream sequential change-point detection, including \cite{mei2010efficient}, \cite{xie2013sequential}, \cite{chen2015graph}, \cite{chan2017optimal},
\cite{chen2019sequential}, and \cite{chen2020high}.
These works consider the detection of a {\em single} change point,  after which all (or part) of the data streams deviate from their initial states. On the other hand, the current work  detects multiple
change points in parallel streams. As the dimension of the action space at each time point grows exponentially with the number of data streams, the current problem tends to be computationally and theoretically more challenging.

\section{Compound Sequential Change-point Detection}\label{sec:problem-settings}
\subsection{Bayesian Change-point Model for Parallel Streams}
Consider in total $K$ parallel data streams. For each $k = 1, ..., K$, the observations from the $k$th stream are $X_{k,t}$, $t = 1, 2, ...$.
 {Each stream $k$ is associated with a change point, denoted by $\tau_k$, which takes value in $\{0\} \cup \{\infty\}\cup\mathbb{Z}_+$}. 
The random vector $(\tau_1,\cdots,\tau_K)$ is assumed to follow a known prior distribution. Given the change points, the data points $X_{k,t}$ from the $k$th stream at time $t$ are independent for different $t$ and $k$. It is further assumed that the pre- and post-change distributions of $X_{k,t}$ have the density functions $p_{k,t}(\cdot)$ and $q_{k,t}(\cdot)$ with respect to some baseline measure $\mu$. That is, $X_{k,t}$ has the following conditional density functions
\begin{equation}\label{eq:general-model}
	X_{k,t}\mid\tau_1,\cdots,\tau_K, \{X_{l,s}; 1\leq l\leq K, 1\leq s\leq t-1 \}
	\sim
	\begin{cases}
		p_{k,t} &\text{ if }t\leq \tau_k,\\
		q_{k,t}& \text{ if }t>\tau_k.
	\end{cases}
\end{equation}
{\begin{remark}
We assume that the prior distribution for the change points, and the pre- and post-change distributions are known, which is a standard assumption in single-stream Bayesian sequential change detection \citep[e.g.,][]{shiryaev1963optimum}. Similar assumptions are adopted in
recent developments on multi-stream sequential multiple testing \citep{song2019sequential} and multi-stream sequential change detection \citep{chen2020false}.




When these distributions are unknown, the current results provide the oracle procedure and theoretical guidance for the development and analysis of the sequential change detection procedures. In addition, the proposed procedure can be extended to handle the unknown distribution scenario via an empirical Bayes approach \citep[see e.g.,][]{efron2008microarrays,jiang2009general,robbins1956empirical,zhang2003compound}.
Alternatively,  we can run the proposed procedure under the worst case model, if
such a model 
can be specified using domain knowledge.
This procedure will preserve some properties of the oracle one, when the change point model enjoys certain stochastic ordering properties.

\end{remark}


Equation \eqref{eq:general-model} provides a general model for change points in parallel data streams. It contains some commonly used models as special cases. We provide two examples below.

\begin{example}[A partially dependent model]\label{example:model-partial}
Let $\tau_0$ be a non-negative random variable and $\tau_1,\cdots,\tau_K$ are i.i.d. conditional on $\tau_0$, with conditional distribution
$
\prob\left(\tau_k=m|\tau_0=m\right)=\eta$ and $\prob\left(\tau_k=\infty|\tau_0=m\right)=1-\eta
$ for $m=0,1,\cdots$, and some parameter $\eta\in [0,1]$.
\end{example}
This model describes the situation where there  is a single change point for all of the data streams. After the change point, all or part of the data streams have a distributional change. If we further let $p_{k,t}$ be the density function of standard normal distribution $N(0,1)$, and $q_{k,t}$ be the density function of $N(\mu,1)$ for some $\mu>0$. Then this model becomes a Bayesian formulation of the change-point models studied in \cite{mei2010efficient}, \cite{xie2013sequential}, and \cite{chan2017optimal}. An interesting boundary case is $\eta=1$, where all the change points $\tau_1=\cdots=\tau_K$ are the same. This case can be viewed as a single change point affecting all the data streams.
\begin{example}[An i.i.d. change-point model]\label{example:mhomo}
Assume that $\tau_1,\cdots,\tau_K$ are i.i.d. geometrically distributed random variables with $\prob(\tau_k=m) = \theta (1-\theta)^m$ for $m=0,1,\cdots$ and $\theta\in(0,1)$. In addition, assume that  $p_{k,t}(x)=p(x)$ and $q_{k,t}(x)=q(x)$ for all $k,t,x$. This model is referred to as model \mhomo~in the rest of the paper.
\end{example}

We remark that the geometric distribution assumption is commonly adopted in
Bayesian change-point detection \citep[see, e.g., ][]{tartakovsky2014sequential}. We adopt this assumption for simplicity
as it leads to analytic posterior probabilities, and point out that it can be relaxed to  other known distributions.  See Section~\ref{subsec:post} for a  discussion about the calculation of posterior probabilities.

\subsection{Compound Sequential Change-point Detection}\label{subsec:procedure}

We now introduce a compound sequential change-point detection problem, which will be defined through an
index set process, $S_t \subset \{1, ..., K\}$, where $S_t$ indicates the set of active streams at time $t$. Specifically, if $k \in S_t$, then stream $k$ is active at time $t$; otherwise, it is deactivated.
We require the process to satisfy that $S_{t+1}\subset S_t$ for all $t = 1, 2, \cdots$, meaning that a stream is not allowed to be re-activated once turned off.
This requirement is consistent with many real-world applications. For example, in standardized educational testing, once an item is found to have leaked, it will be removed from the item pool permanently. At the beginning of data collection (i.e., $t = 1$), all the data streams are active, and thus $S_1 = \{1, ..., K\}$.

A sequential detection procedure $S_t$ is defined together with an information filtration, where the definition is inductive. We first let $\mathcal{F}_{1} = \sigma(X_{k,1}, k = 1, ..., K)$.
Then for any $t > 1$, we let $\mathcal{F}_{t} = \sigma(\mathcal{F}_{t-1}, S_{t}, X_{k,t},  k \in S_t)$, where
$S_t \subset \{1, ..., K\}$ is $\mathcal{F}_{t-1}$ measurable. 
We say  $\{\mathcal{F}_{t}\}_{t = 1, 2, ...}$ is the information filtration, and the index set process $\{S_t\}_{t\geq 1}$ describes a compound sequential change-point detection procedure with respect to this information filtration.

Sometimes, it is  more convenient to represent the decision procedure by a random vector
$\TT=(T_1,\cdots, T_K)$, where $T_k \in \mathbb Z_{+}$ is defined as
$T_k = \sup\{t: k \in S_t\}.$
It is easy to check that $\{T_{k}=t\}\in \mathcal{F}_t$ for all $t$, and thus $T_k$ is a stopping time under the filtration $\{\mathcal{F}_{t}\}_{t = 1, 2,\cdots}$. The stopping time $T_k$ indicates the time up to which we collect data from the $k$th stream. In other words, starting from time $T_k+1$, the $k$th stream is deactivated and its data are no longer collected. The
index set at time $t$ is given by $S_t = \{k: T_k \geq t\}$.

The sigma field $\sigma(X_{k,s\wedge T_k}, s\leq t, k = 1, ..., K)$ is in $\mathcal{F}_t$, meaning that
our information filtration at time $t$ contains all the information from the streams when they are active.
Besides the information from observable data $X_{k,t}$,  the filtration $\mathcal{F}_t$ also contains information from the decision history, reflected by that $S_s$ is measurable with respective to $\mathcal{F}_t$, for all $s \leq t$.

In what follows, we introduce two compound performance metrics for this sequential decision problem.

\subsection{Local False Non-discovery Rate}

In this sequential decision problem, our primary goal is to control the proportion of post-change streams among the active ones at any time, where a smaller proportion indicates a better overall quality of the active streams.
This proportion can be viewed as a False Non-discovery Proportion (FNP) that is often considered in multiple hypothesis testing \citep{genovese2002operating}, but defined at each time point under the current sequential setting. More precisely, we define the FNP as
\begin{equation}\label{eq:pcr}
	\pcr_{t+1}(\TT) = \frac{\sum_{k \in S_{t+1}} \ind( \tau_k<t) }{|S_{t+1}| \vee 1} =\frac{ \sum_{k=1}^K \ind(T_{k}> t, \tau_k<t)}{ \big\{\sum_{k=1}^K \ind(T_{k}> t)\big\}\vee 1},
\end{equation}
where $t = 1, 2, ...$, $a\vee b=\max(a,b)$ and $|S|$ indicates the size of a set $S$.
In this definition, $|S_{t+1}|$ 
represents the total number of active streams at time $t+1$, 
and $\sum_{k \in S_{t+1}} \ind( \tau_k<t)$ 
represents the total number of active post-change streams at time $t+1$.
By having `$\vee 1$' in the denominator, $\pcr_{t+1}(\TT)$ is well-defined even when $|S_{t+1}| = 0$.
Finally, we let $\pcr_1=0$, as
$\tau_k\geq 0$ for $k = 1, 2, ..., K$.

Ideally, we would like to control the FNP to be below an acceptable threshold at any time point, which is not always possible as the change points are unknown.
As an alternative, we control the Local False Non-discovery Rate (LFNR) which can be viewed as the best estimate of the FNP under the Bayesian sense.
The LFNR is defined as
\begin{equation}\label{eq:pcr2}
	\lpcr_{t+1}(\TT) =\expe(\pcr_{t+1}(\TT)\mid \mathcal F_t), ~t = 1, 2, \cdots.
\end{equation}
Since  $\pcr_1=0$, $\lpcr_{1}(\TT)$ is set to 0.

In what follows, we will focus on sequential change-point detection procedures defined in Section \ref{subsec:procedure} under the constraint that $\lpcr_{t}(\TT) \leq \alpha$ for all $t$ for some
pre-specified level $\alpha$ (e.g., $\alpha=1\%$).
More precisely, for a given $\alpha\in(0,1]$,
we consider the following class of compound sequential change-point detection procedures which controls the LFNR to be below or equal to $\alpha$  at any time,
$
	\adset_{\alpha}=\{
	\TT\in\adset: \lpcr_{t}(\TT) \leq \alpha \text{ a.s.}, \text{ for all } t=1,2,\cdots
	\},
$
where $\adset$ denotes the entire set of compound sequential change-point detection procedures. 

We provide a few remarks. First, $\lpcr_{t+1}(\TT)$ is a random variable measurable with respect to $\mathcal{F}_t$. It depends on both the change-point model and the detection procedure $\TT$.  Second,
it is easy to observe that $\expe(\pcr_t(\TT))\leq \alpha$ for every $t$, for any $\TT\in\adset_{\alpha}$. That is, the unconditional expectation of FNP is also controlled at the same $\alpha$ level. {Finally, 
 by replacing $\tau_k<t$ with $\tau_k\geq t$ and $ S_{t+1}$ with $S_{t}\setminus S_{t+1}$ in the definition of FNP,
	 we can similarly define the  false discovery proportion (FDP) and local false discovery rate (LFNR) as $\textrm{FDP}_{t+1}=(|S_{t}\setminus S_{t+1}| \vee 1)^{-1}{\sum_{k \in S_t\setminus S_{t+1}} \ind( \tau_k\geq t) }$ and $\textrm{LFDR}_{t+1}=\expe(\textrm{FDP}_{t+1}(\TT)|\fil_t)$. 
The main difference between FNR- and FDR-type risk measures is whether focusing on the remaining streams or the streams to be deactivated. Specifically, the LFNR focuses on the remaining streams and thus is a preferred measure if the goal is to control the overall quality of the active data streams (e.g., controlling the proportion of leaked items in the item pool of an educational test). On the other hand, the LFDR is calculated based on the streams to be detected and deactivated. It is thus a better metric if the goal is to control the accuracy among the detected streams.
}





\subsection{Stream Utilization and Optimality Criteria} \label{subsec:criteria}

Given a level $\alpha$, the class $\adset_{\alpha}$ has many elements. 
We propose to compare them based on their overall utilization of data streams.
More precisely,
we consider the following measure
\begin{equation*}
	\uti_t(\TT)= \sum_{s=1}^t\vert S_s\vert = \sum_{s=1}^t \sum_{k=1}^K \ind(T_k\geq s),
\end{equation*}
where $\TT=(T_1,\cdots, T_K)$ is a sequential change-point detection procedure and $\uti_t(\TT)$ is the total number of data points collected from the beginning to time $t$.

For two sequential procedures $\TT$ and $\TT'$ in $\adset_{\alpha}$, we say $\TT$ is more efficient 
than $\TT'$ at time $t$ if $ \expe(\uti_t(\TT)) \geq \expe(\uti_t(\TT'))$.
In addition, we say
$\TT$ is uniformly more efficient than
$\TT'$ if $\expe( \uti_t(\TT)) \geq  \expe(\uti_t(\TT') )$, for all $t = 1, 2, ...$. 
Following the previous discussion, our goal becomes developing an efficient procedure in terms of stream utilization, under the constraint that LFNR is below a pre-specified $\alpha$ level all the time. 
Specifically, we consider the following two optimality criteria, which will guide our development of compound detection procedures to be discussed in Section~\ref{sec:method}.

\begin{definition}[Uniform optimality]\label{def:uniform}
We say a sequential change-point detection procedure $\TT\in \adset_{\alpha}$ is uniformly optimal in $\adset_{\alpha}$, if $\TT$ is uniformly more efficient than
$\TT'$, for any $\TT' \in \adset_{\alpha}$. {That is,  $\expe(\uti_t(\TT)) = \sup_{\TT'\in \adset_{\alpha}} \expe(\uti_t(\TT')),$
for all $t = 1, 2, ...$.}
\end{definition}
Ideally, we would like to find this uniformly optimal procedure.
However, such a procedure does not necessarily exist  as the most efficient procedure at one time point  may be less efficient than another procedure at a different time point.
Thus, we also consider a weaker version of optimality, which is referred to as the local optimality at a given time point.
\begin{definition}[Local optimality]\label{def:local}
Given $\mathcal F_t$ at time $t$, we say the choice of $S_{t+1} \subset S_t$ is
locally optimal at time $t+1$, if $S_{t+1}$ is $\mathcal F_t$ measurable,
$\expe\left(\frac{\sum_{k \in S_{t+1}} \ind( \tau_k<t) }{|S_{t+1}| \vee 1}\big\vert \mathcal F_t\right)\leq \alpha,$ 
{and $\vert S_{t+1}\vert \geq \vert S\vert$ a.s. for any other $S \subset S_t$ that
is also $\mathcal F_t$ measurable and satisfies}
$\expe\left(\frac{\sum_{k \in S} \ind( \tau_k<t) }{|S| \vee 1}\big\vert \mathcal F_t\right)\leq \alpha. $


\end{definition}
Note that the local optimality criterion only looks at one step forward. A procedure is locally optimal if
it maximizes the stream utilization in the next step. Achieving local optimality in each step does not necessarily lead to uniform optimality and a uniformly optimal procedure does not necessarily exist; see Example~\ref{example:noopt} in Section~\ref{sec:theory}.

{We provide a discussion on the choice of the performance metric. The expected stream utilization measure is most sensible, if the active streams have the same utility at any time point, whether having changed or not. This approximately holds for the application to item pool monitoring in educational testing, when the leaked items are only accessible by a small proportion of test takers, in which case the utilities of the leaked and unleaked items are similar.

 A closely related performance measure is the 
cumulative number of detections $\textrm{CD}_t=K - |S_t|$ at each time point. This performance metric is sensible when each detection (and thus deactivation) is associated with a fixed cost, in which case the goal becomes to minimize the total cost up to each time point. This metric may also be sensible for the application to item pool monitoring in educational testing. That is, once an item is deactivated, a new item needs to be developed as a replacement, for which the cost  is approximately the same across items.

In some applications, it may be more sensible to consider a performance metric based on the utilization of pre-change streams, defined as $\text{RL}_t(\TT)=\sum_{k=1}^K (T_k\wedge \tau_k\wedge t)$. {The expectation of this metric can be viewed as an online-and-compound version of the average run length to false alarm \citep{lorden1971procedures}},
a classical performance metric for sequential change detection. 


As will be shown in Section~\ref{sec:additional}, similar optimality results hold based on the performance measures $\textrm{RL}_t(\TT)$ and $\textrm{CD}_t(\TT)$.
}


%
%
%
%
%
%
%

\section{Proposed Method}\label{sec:method}
\subsection{One-step Update Rule}

We first propose a one-step update rule for controlling the LFNR to be below a pre-specified level. Let a certain sequential change-point detection procedure be implemented from time 1 to $t$, and $\mathcal F_t$ be the current information filtration. A one-step update rule decides the index set $S_{t+1} \subset S_t$ based on the up-to-date information $\mathcal F_t$, so that the LFNR at time $t+1$
is controlled below the pre-specified level $\alpha$. In the meantime, this update rule tries to maximize the size of $S_{t+1}$ to optimize stream utilization. {The details of the proposed one-step update rule is described in Algorithm~\ref{alg:one-step-rule1} below.}

%
{\centering
\renewcommand{\algorithmicrequire}{\textbf{Input:}}
\renewcommand{\algorithmicensure}{\textbf{Output:}}
\begin{algorithm}[tbh]
\caption{One-step update rule.\label{alg:one-step-rule1}}
\begin{algorithmic}[1]
\Require  Threshold $\alpha$, the current index set $S_t$, and posterior probabilities $(W_{k,t})_{k\in S_t}$, where
$W_{k,t} = \prob(\tau_k < t\vert \mathcal F_t).$
\State Sort the posterior probabilities in an ascending order. That is,
$W_{k_1,t} \leq W_{k_2,t} \leq \cdots \leq W_{k_{|S_t|},t},$
where $S_t = \{k_1, ..., k_{|S_t|}\}$. To avoid additional randomness, when there exists a tie ($W_{k_i,t} = W_{k_{i+1},t}$), we require $k_i <k_{i+1}$.
\State For $n = 1, ..., |S_t|$, define
$R_{n} = \frac{\sum_{i=1}^n W_{k_i,t}}{n}.$
and define $R_{0} =0$.
\State Find the largest $n \in \{0, 1, ..., |S_t|\}$ such that
$R_{n} \leq \alpha.$
\Ensure $S_{t+1} = \{k_1, ..., k_n\}$ if $n\geq 1$ and $S_{t+1} = \emptyset$ if $n = 0$.
\end{algorithmic}
\end{algorithm}
}

{This algorithm contains three steps. In the first step, the stream-specific posterior probabilities are sorted in an ascending order. We tend to select the streams with small posterior probabilities into $S_{t+1}$, as 
they are more likely to be pre-change streams. In the second step, we calculate the cumulative averages of the sorted posterior probabilities. Finally, we find the largest $n$  such that the corresponding cumulative average is  no greater than $\alpha$. The corresponding streams will be kept in $S_{t+1}$ and the rest will be deactivated. The cumulative average of the $n$ streams gives the LFNR for $S_{t+1}$.}


The proposed one-step update rule controls the LFNR under the general model in \eqref{eq:general-model}, as formally described in Proposition~\ref{prop:control}. 
\begin{proposition}\label{prop:control}
Suppose that we obtain the index set $S_{t+1}$ using Algorithm~\ref{alg:one-step-rule1}, given
the index set $S_t$ and information filtration $\mathcal F_t$ at time $t$. Then the LFNR at time $t+1$ satisfies
$\expe\left(\frac{\sum_{k \in S_{t+1}} \ind( \tau_k<t) }{|S_{t+1}| \vee 1}\big\vert \mathcal F_t\right)\leq \alpha.$
\end{proposition}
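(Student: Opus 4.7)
The plan is to verify the inequality by direct computation, exploiting the fact that the set $S_{t+1}$ produced by Algorithm~\ref{alg:one-step-rule1} is itself $\mathcal{F}_t$-measurable. First I would separate the trivial case $n = 0$ (where $S_{t+1} = \emptyset$, so the numerator is $0$ and the ratio equals $0 \leq \alpha$) from the main case $n \geq 1$. In the main case, the cardinality $|S_{t+1}| = n$ and the specific indices $k_1, \ldots, k_n$ are all functions of the sorted posterior probabilities $(W_{k,t})_{k \in S_t}$ and of $S_t$, both of which are $\mathcal{F}_t$-measurable; hence $S_{t+1}$ and $n$ are $\mathcal{F}_t$-measurable.

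Next I would use this measurability to pull the denominator out of the conditional expectation:
\begin{equation*}
\expe\!\left(\frac{\sum_{k \in S_{t+1}} \ind(\tau_k < t)}{|S_{t+1}| \vee 1}\,\bigg\vert\, \mathcal{F}_t\right)
= \frac{1}{n}\sum_{i=1}^{n} \expe(\ind(\tau_{k_i} < t) \mid \mathcal{F}_t)
= \frac{1}{n}\sum_{i=1}^{n} W_{k_i,t} = R_n.
\end{equation*}
The first equality uses $\mathcal{F}_t$-measurability of $n$ and of the indices $k_1,\ldots,k_n$; the second uses the definition $W_{k,t} = \prob(\tau_k < t \mid \mathcal{F}_t)$. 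By the construction of $n$ in Step~3 of the algorithm, $R_n \leq \alpha$, which gives the claim.

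The only point requiring a little care is the $\mathcal{F}_t$-measurability of the random index $n$ and of the selected indices $k_1, \ldots, k_n$, since the ascending sort and the tie-breaking rule make them nontrivial functions of the posterior probabilities. I would address this by noting that the sort and the stopping rule are deterministic functions of the $\mathcal{F}_t$-measurable vector $(W_{k,t})_{k \in S_t}$, with ties broken by the deterministic ordering on $\{1,\ldots,K\}$; thus $n$ and $\{k_1,\ldots,k_n\}$ are $\mathcal{F}_t$-measurable, which justifies the algebraic manipulation above. No further machinery is needed, and the result follows.
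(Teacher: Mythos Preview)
Your proposal is correct and follows essentially the same argument as the paper's own proof: both rely on the $\mathcal{F}_t$-measurability of $S_{t+1}$ to reduce the conditional expectation to $\frac{\sum_{k\in S_{t+1}} W_{k,t}}{|S_{t+1}|\vee 1} = R_n$, and then invoke the algorithm's stopping rule to get $R_n \leq \alpha$. Your treatment is slightly more detailed in separating out the $n=0$ case and justifying measurability of the sorted indices, but the substance is identical.
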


\subsection{Proposed Compound Sequential Change-point Detection Procedure}

The proposed   procedure adaptively applies the above one-step update rule. {That is, at each time point $t$, we select the active set $S_{t+1}$ using Algorithm~\ref{alg:one-step-rule1},
given the information available at time $t$ including the current active set $S_t$ and the corresponding posterior probabilities $(W_{k,t})_{k\in S_t}$.}
This method is formally described in Algorithm~\ref{alg:SSS} below.
We will later refer to this procedure as $\TTp$.


{\centering
\renewcommand{\algorithmicrequire}{\textbf{Input:}}
\renewcommand{\algorithmicensure}{\textbf{Output:}}
\begin{algorithm}[tbh]
 \caption{Proposed Procedure ($\TTp$).\label{alg:SSS}}
\begin{algorithmic}[1]
\Require  Threshold $\alpha$.
\State Let $S_1=\{1,\cdots,K\}$ and  $W_{k,1} = \prob(\tau_k < 1\vert \fil_1)$ for $k\in S_1$.
\State For {$t=1, 2,3,\cdots$},
input $\alpha$, $S_t$ and $(W_{k,t})_{k\in S_t}$ to Algorithm~\ref{alg:one-step-rule1}, and obtain
$S_{t+1}$
and $W_{k,t+1} = \prob(\tau_k < t+1\vert \fil_{t+1})$ for $k\in S_{t+1}$,
where $\fil_{t+1} = \sigma(\fil_t, S_{t+1}, X_{k,t+1}, k\in S_{t+1})$.
\Ensure 
$\{S_t\}_{t=1,2,\cdots}$, or equivalently,
$\TTp=(T_1,\cdots, T_K)$, where 
$T_k= \sup\{t: k \in S_t\}.$
\end{algorithmic}
\end{algorithm}
}


Making use of Proposition~\ref{prop:control}, it is easy to show that the proposed procedure controls the LFNR at each step  under the general change-point model described in \eqref{eq:general-model}. This result is summarized in Proposition~\ref{prop:control2}.
\begin{proposition}\label{prop:control2}
Let $\TTp$ be defined in Algorithm~\ref{alg:SSS}. Then, $\TTp\in\adset_{\alpha}$.
\end{proposition}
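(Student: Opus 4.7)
The plan is to verify directly from the inductive construction of $\TTp$ in Algorithm~\ref{alg:SSS} that $\TTp$ is a legitimate compound sequential change-point detection procedure and that $\lpcr_t(\TTp) \leq \alpha$ almost surely for every $t$. The argument amounts to a one-line invocation of Proposition~\ref{prop:control} at each time step, combined with a trivial base case.

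First I would record that $\TTp \in \adset$. Algorithm~\ref{alg:SSS} makes $S_{t+1}$ a deterministic function of $S_t$ and the posterior probabilities $(W_{k,t})_{k\in S_t}$, both of which are $\mathcal{F}_t$-measurable; moreover Algorithm~\ref{alg:one-step-rule1} always returns a set $S_{t+1}\subset S_t$, so the monotone nesting $S_{t+1}\subset S_t$ required of a valid procedure holds. Equivalently, the induced $T_k = \sup\{t:k\in S_t\}$ are stopping times with respect to $\{\mathcal{F}_t\}$, so $\TTp$ is a bona fide element of $\adset$.

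Next I would verify $\lpcr_t(\TTp)\leq \alpha$ for all $t$ by induction on $t$. The base case $t = 1$ is immediate from the convention $\lpcr_1(\TTp) = 0 \leq \alpha$ noted after equation~\eqref{eq:pcr2}. For the inductive step, assume the procedure has been executed up to time $t$, so that $\mathcal{F}_t$, $S_t$, and $W_{k,t} = \prob(\tau_k < t \mid \mathcal{F}_t)$ for $k \in S_t$ are all defined. By the definition of $\TTp$, $S_{t+1}$ is obtained by feeding $\alpha$, $S_t$, and $(W_{k,t})_{k\in S_t}$ into Algorithm~\ref{alg:one-step-rule1}. Proposition~\ref{prop:control} then yields
$$\lpcr_{t+1}(\TTp) \;=\; \expe\bigl(\pcr_{t+1}(\TTp) \,\big\vert\, \mathcal{F}_t\bigr) \;\leq\; \alpha \quad \text{a.s.},$$
which closes the induction.

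I do not expect any serious obstacle: Proposition~\ref{prop:control} was proved for the general model~\eqref{eq:general-model} and applies verbatim at every step. The only point worth flagging in the write-up is that the posterior probabilities $W_{k,t}$ fed into Algorithm~\ref{alg:one-step-rule1} are taken with respect to the filtration $\mathcal{F}_t$ that already encodes all past deactivation decisions $S_1\supset S_2 \supset \cdots \supset S_t$, so the hypothesis of Proposition~\ref{prop:control} is satisfied at every invocation and no additional accounting is required.
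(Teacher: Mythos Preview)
Your proposal is correct and follows essentially the same approach as the paper: the paper's proof is the one-liner ``combine Proposition~\ref{prop:control} with Lemma~\ref{lemma:control},'' and your argument is exactly the unpacked version of this, verifying $\TTp\in\adset$ directly and then invoking Proposition~\ref{prop:control} at every step to control $\lpcr_t$.
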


\subsection{Calculation of Posterior Probabilities}\label{subsec:post}

The proposed update rule relies on the posterior probability
	$W_{k,t}=\prob\left(\tau_k<t|\mathcal{F}_{t}\right)$,
which is the conditional probability of the change point has occurred to stream $k$ before the current time point $t$.
In general, this posterior probability depends on data from all the streams and thus its evaluation may be computationally intensive 
when $K$ is large and $(\tau_1,\cdots,\tau_K)$ has a complex dependence structure. 
In that case, a Markov chain Monte Carlo methods may be needed for evaluating this posterior probability.
Under the special case of model {\mhomo} described in Example~\ref{example:mhomo}, this posterior probability is easy to evaluate using an iterative update rule as given in Lemma~\ref{lem:update} below.

\begin{lemma}\label{lem:update}
	Under model {\mhomo} described in Example~\ref{example:mhomo}, $W_{k,0}=0$ for $1\leq k\leq K$ and $W_{k,t}$ can be computed using the following update rule for $1\leq k\leq K$,
	\begin{equation}\label{eq:posterior-update}
		W_{k,t+1}= \begin{cases}
					\frac{q(X_{k,t+1})/p(X_{k,t+1})}{(1-\theta)(1-W_{k,t})/(\theta+(1-\theta)W_{k,t})+q(X_{k,t+1})/p(X_{k,t+1})}&\text{ if } 1\leq t\leq T_{k}-1,\\
					W_{k,T_{k}} & \text{ if } t\geq T_k.
		\end{cases}
	\end{equation}
\end{lemma}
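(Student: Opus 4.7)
The plan is to verify the recursion in two substantive steps: first reduce the posterior of $\tau_k$ given $\mathcal{F}_t$ to one involving only stream $k$'s own data, and then apply Bayes' rule together with the memoryless property of the geometric prior. The deactivation branch and the base case will be handled as bookkeeping at the end.

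First I would show that for every $t$ and $k$,
\[
\prob\bigl(\tau_k \in \cdot \mid \mathcal{F}_t\bigr) \;=\; \prob\bigl(\tau_k \in \cdot \mid X_{k,1}, \ldots, X_{k,t\wedge T_k}\bigr).
\]
Under model \mhomo, the $\tau_k$'s are a priori independent and the observations across streams are conditionally independent given $(\tau_1,\ldots,\tau_K)$; moreover each $S_s$ is $\mathcal{F}_{s-1}$-measurable and hence a deterministic function of past observations. Inducting on $t$, I would argue that the joint posterior factorizes,
\[
\prob\bigl(\tau_1,\ldots,\tau_K \mid \mathcal{F}_t\bigr) \;=\; \prod_{k=1}^K \prob\bigl(\tau_k \mid X_{k,1},\ldots,X_{k,t\wedge T_k}\bigr).
\]

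Given this reduction, fix $k$ and $t$ with $t+1 \le T_k$. I would decompose $\{\tau_k < t+1\}=\{\tau_k < t\}\cup\{\tau_k = t\}$ and invoke the memoryless property of the geometric prior: conditional on $\{\tau_k \ge t\}$ and $\mathcal{F}_t$, the posterior distribution of $\tau_k - t$ is again $\mathrm{Geom}(\theta)$, because on this event $X_{k,1},\ldots,X_{k,t}$ are all drawn from $p$ and hence carry no information that distinguishes values of $\tau_k \ge t$. This yields
\[
\prob(\tau_k < t+1 \mid \mathcal{F}_t) = \theta + (1-\theta)W_{k,t},\qquad \prob(\tau_k \ge t+1 \mid \mathcal{F}_t) = (1-\theta)(1-W_{k,t}).
\]
Since $X_{k,t+1}$ has density $q$ on $\{\tau_k < t+1\}$ and density $p$ on $\{\tau_k \ge t+1\}$, a one-step Bayes update gives
\[
W_{k,t+1} = \frac{[\theta+(1-\theta)W_{k,t}]\,q(X_{k,t+1})}{[\theta+(1-\theta)W_{k,t}]\,q(X_{k,t+1}) + (1-\theta)(1-W_{k,t})\,p(X_{k,t+1})},
\]
and dividing numerator and denominator by $p(X_{k,t+1})\cdot[\theta+(1-\theta)W_{k,t}]$ produces \eqref{eq:posterior-update}. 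For $t \ge T_k$, the identity $W_{k,t+1}=W_{k,T_k}$ is a bookkeeping convention: Algorithm~\ref{alg:SSS} only accesses $W_{k,t}$ for $k \in S_t$, so no genuine update is required after deactivation. The base case $W_{k,0}=0$ is vacuous because $\tau_k \ge 0$.

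The main obstacle is the factorization step. Because the deactivation times $T_k$ depend on all streams' data through the $S_s$ process, conditioning on $\mathcal{F}_t$ couples the streams through the observed sample path of $(S_s)_{s\le t}$. The crux of the argument is that this coupling is itself a deterministic function of the observations, so that after conditioning on the observed data it contributes no information beyond what is already captured in the factorized likelihood, leaving the stream-level posterior independence intact.
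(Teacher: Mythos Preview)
Your proposal is correct and arrives at the same recursion as the paper, but the derivation of the recursive step differs in spirit. The paper reduces to the single-stream posterior just as you do, then appeals to a supporting lemma (their Lemma~\ref{lemma:dist-v}) that carries out an explicit likelihood-ratio computation: it writes the posterior as $Q_{k,t}/(Q_{k,t}+(1-\theta)^t)$ with $Q_{k,t}=\sum_{s=0}^{t-1}\theta(1-\theta)^s\prod_{r=s+1}^t q(X_{k,r})/p(X_{k,r})$, establishes the algebraic recursion $Q_{k,t+1}=\{q(X_{k,t+1})/p(X_{k,t+1})\}\{\theta(1-\theta)^t+Q_{k,t}\}$, and then simplifies. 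Your route instead uses a predict-update filtering argument: you first compute the one-step-ahead prior $\prob(\tau_k<t+1\mid\mathcal{F}_t)=\theta+(1-\theta)W_{k,t}$ by exploiting the memoryless property of the geometric prior, and then apply a single Bayes update with the new datum $X_{k,t+1}$. Your approach is more conceptual and makes the role of the geometric assumption transparent; the paper's approach is more algebraic but has the advantage of giving a closed-form expression for the posterior that is reused elsewhere (e.g., in their Lemma~\ref{lemma:delta_t}, which is exactly your intermediate identity $\delta_{k,t}=\theta+(1-\theta)W_{k,t}$). You also treat the factorization step---showing that the decision history $(S_s)_{s\le t}$ does not break the stream-wise posterior independence---more carefully than the paper, which simply asserts it ``according to the independence assumption.''
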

We point out that the iteration in the above lemma is a slight modification of a classical result for Bayesian sequential change-point detection \citep{shiryaev1963optimum}. Indeed, with a single data stream,
the statistic $W_{k,t}$ is known to be the test statistic for the Shiryaev procedure, a sequential change-point detection procedure that has been proven the Bayes rule for minimizing the average detection delay while controlling the probability of false alarm.
A slight difference here is that $W_{k,t}$ stays the same after $T_k$ due to the control process that deactivates data streams.

\section{Theoretical Results}\label{sec:theory}

\subsection{Optimality Results}\label{sec:optimality}
In what follows, we establish
optimality results for the proposed one-step update rule
and the proposed procedure
 $\TTp$,
 under the optimality criteria given in Section~\ref{subsec:criteria}. 
The proposed update rule is locally optimal under the general change-point model   \eqref{eq:general-model}, following  Definition~\ref{def:local} for local optimality.
\begin{proposition}\label{prop:size}
Given LFNR level $\alpha$ and information filtration $\mathcal F_t$,
the index set $S_{t+1}$ given by Algorithm~\ref{alg:one-step-rule1} is locally optimal at time $t+1$.
\end{proposition}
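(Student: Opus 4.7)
The plan is to argue the local optimality pointwise on $\mathcal{F}_t$, by reducing the conditional expectation in the LFNR constraint to a deterministic optimization problem in the posterior probabilities $W_{k,t}$ and then showing that a sorting-plus-greedy rule solves it.

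First, I would observe that because any admissible $S \subset S_t$ appearing in Definition~\ref{def:local} is required to be $\mathcal{F}_t$-measurable, both $S$ and $|S|$ are constants given $\mathcal{F}_t$. Therefore the LFNR constraint collapses to
\[
\frac{1}{|S|\vee 1}\sum_{k \in S} \expe(\ind(\tau_k<t)\mid \mathcal F_t) \;=\; \frac{1}{|S|\vee 1}\sum_{k \in S} W_{k,t} \;\leq\; \alpha,
\]
using the definition $W_{k,t}=\prob(\tau_k<t\mid \mathcal F_t)$. Hence, conditional on $\mathcal F_t$, the local-optimality statement reduces to the following deterministic claim: among subsets $S \subseteq S_t$ whose weighted average $|S|^{-1}\sum_{k\in S}W_{k,t}$ is at most $\alpha$ (with the empty set interpreted as feasible), Algorithm~\ref{alg:one-step-rule1} returns one of largest cardinality.

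Next I would establish this deterministic claim directly. Let $W_{k_1,t}\leq \cdots \leq W_{k_{|S_t|},t}$ be the sorted posterior probabilities and let $n$ be the largest index (possibly $0$) with $R_n = n^{-1}\sum_{i=1}^n W_{k_i,t} \leq \alpha$, so that Algorithm~\ref{alg:one-step-rule1} outputs $S^* = \{k_1,\dots,k_n\}$. Suppose, for contradiction, that some feasible $S \subseteq S_t$ with $|S| = m > n$ exists. Since $\{W_{k_1,t},\dots,W_{k_m,t}\}$ are by construction the $m$ smallest elements of $\{W_{k,t}:k\in S_t\}$, we have $\sum_{i=1}^m W_{k_i,t} \leq \sum_{k \in S} W_{k,t}$, and feasibility of $S$ gives $\sum_{k\in S}W_{k,t}\leq m\alpha$. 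Dividing by $m$ yields $R_m \leq \alpha$, which contradicts the maximality of $n$. Hence $|S| \leq n = |S^*|$, which is the desired inequality. Feasibility of $S^*$ itself was already established in Proposition~\ref{prop:control}, so $S^* \in \adset_\alpha$'s one-step feasible family.

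I do not anticipate a genuine obstacle: the only subtle point is recognizing that the $\mathcal F_t$-measurability of $S$ trivializes the conditional expectation, after which a standard rearrangement/greedy argument closes the proof; the edge cases $n=0$ (where $S^* = \emptyset$ and $|S^*|=0\leq|S|$ forces $|S|=0$ by the contradiction above) and the tie-breaking convention built into Algorithm~\ref{alg:one-step-rule1} (which only affects which maximizer is picked, not the maximum size) should be noted explicitly for completeness.
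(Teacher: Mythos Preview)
Your proposal is correct and follows essentially the same approach as the paper: both reduce the LFNR constraint to the deterministic average condition $\sum_{k\in S}W_{k,t}\le \alpha|S|$ and then use the rearrangement fact that any feasible $m$-subset forces $R_m\le\alpha$. The paper merely packages this rearrangement step as Lemma~\ref{lemma:ho-domin} (stated in the $\Ho/\Io$ and partial-order language that is reused for the uniform-optimality proof), whereas you carry it out inline.
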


In general, having local optimality in each step does not necessarily lead to uniform optimality and a uniformly optimal procedure may not even exist. However, Theorem~\ref{thm:uniformy optimality} below shows that
a uniformly optimal procedure exists under change-point model {\mhomo} and furthermore the proposed procedure is uniformly optimal. 
In other words, in this case, a myopic decision rule that maximizes the next-step stream utilization under the LFNR constraint is also uniformly optimal throughout time.


\begin{theorem}\label{thm:uniformy optimality}
Under model {\mhomo}, the proposed method $\TTp$ is uniformly optimal in $\adset_{\alpha}$.
\end{theorem}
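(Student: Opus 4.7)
The plan is to prove the stronger claim that $\expe[|S_t(\TTp)|]\geq \expe[|S_t(\TT')|]$ for every $\TT'\in \adset_{\alpha}$ and every $t\geq 1$, from which uniform optimality follows because $\uti_t(\TT)=\sum_{s=1}^t |S_s|$. Under model \mhomo, the triples $(\tau_k,(X_{k,t})_t)$ are i.i.d.\ across $k$, so a direct conditional-independence argument shows $W_{k,t}=\prob(\tau_k<t\mid \fil_t)$ depends only on stream $k$'s own data $X_{k,1},\ldots,X_{k,t\wedge T_k}$, and by Lemma~\ref{lem:update} each active stream's posterior evolves as an independent copy of a scalar Markov chain with a common transition kernel $\Pi$. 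Using stream exchangeability one may restrict attention to symmetric procedures, which reduces the problem to comparing two Markov chains on the state space of sorted ascending tuples of active posteriors.

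I would equip this state space with the partial order $\mathbf{w}\succeq \mathbf{v}$ iff $|\mathbf{w}|\geq |\mathbf{v}|$ and $w_i\leq v_i$ for $1\leq i\leq |\mathbf{v}|$, and prove by induction on $t$ that there exists a coupling under which the sorted active-posterior tuples satisfy $\mathcal{W}_t^*\succeq \mathcal{W}_t'$ almost surely. The base case at $t=1$ is immediate since $S_1^*=S_1'=\{1,\ldots,K\}$ with identical posteriors. For the selection step at time $t+1$, assume $\mathbf{w}\succeq \mathbf{v}$ pre-trim. Then $\sum_{i\leq j}w_i/j\leq \sum_{i\leq j}v_i/j$ for $j\leq |\mathbf{v}|$, so the greedy size produced by Algorithm~\ref{alg:one-step-rule1} on $\mathbf{w}$ dominates the maximal LFNR-valid size for $\mathbf{v}$, and any valid subtrim $(v_{i_1},\ldots,v_{i_{m'}})$ of $\mathbf{v}$ satisfies $w_j\leq v_j\leq v_{i_j}$ for $j\leq m'$, yielding monotonicity of the selection map under $\succeq$.

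The main technical step is propagating the invariant through the evolution, where the extra entries of $\mathbf{w}$ make the two chains live on spaces of different dimensions. The kernel $\Pi$ is stochastically monotone: by Lemma~\ref{lem:update}, $W_{k,t+1}=L/(A+L)$ where $L=q(X_{k,t+1})/p(X_{k,t+1})$ and $A=(1-\theta)(1-w)/(\theta+(1-\theta)w)$ is decreasing in $w$, while the marginal law of $X_{k,t+1}$ given $W_{k,t}=w$ is the mixture $[\theta+(1-\theta)w]\,q+(1-\theta)(1-w)\,p$, which likelihood-ratio dominates its counterpart at smaller $w$; both the decrease of $A$ and the stochastic increase of $L$ push $\Pi(w,\cdot)$ upward in $w$. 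Strassen's theorem \citep{thorisson2000regeneration} then yields, for every pair $w\leq w'$, a pointwise monotone coupling of $\Pi(w,\cdot)$ and $\Pi(w',\cdot)$. Pair the first $|\mathbf{v}^\dagger|$ entries of the post-selection tuple $\mathbf{w}^\dagger$ with $\mathbf{v}^\dagger$ under this coupling and let the remaining $|\mathbf{w}^\dagger|-|\mathbf{v}^\dagger|$ entries evolve freely under $\Pi$; then for every $j\leq |\mathbf{v}^\dagger|$ the pairing produces $j$ entries of the evolved tuple that are $\leq$ the $j$-th smallest entry of $\mathbf{V}'$, so the $j$-th smallest of $\mathbf{W}'$ is itself $\leq$ that entry, giving $\mathbf{W}'\succeq \mathbf{V}'$ and completing the induction. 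The hardest part is precisely this dimension-mismatch coupling: because the two active sets have different sizes owing to asymmetric deactivation histories, one cannot couple the processes stream-by-stream on a common index set, and the partial order must be designed to ignore the extra entries of $\mathbf{w}$ while remaining strong enough to imply the desired size ordering $|S_t^*|\geq |S_t'|$.
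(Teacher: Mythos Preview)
Your proposal uses the same two technical pillars as the paper's proof: the partial order on sorted posterior tuples of varying length (your $\succeq$ is exactly the paper's $\lleq$), and the stochastic monotonicity of the one–step Shiryaev kernel $\Pi$ together with Strassen's theorem. The selection-step and evolution-step arguments you sketch are correct and match Lemmas~\ref{lemma:h-monotone}, \ref{lemma:ho-domin}, \ref{lemma:monotone-v} and \ref{lemma:monotone-kka} in the paper.

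Where your write-up differs from the paper is in the \emph{organization} of the induction, and there is a genuine gap in how you handle the arbitrary competitor. You assert that ``stream exchangeability \ldots\ reduces the problem to comparing two Markov chains on the state space of sorted ascending tuples''. This is not correct as stated: an arbitrary $\TT'\in\adset_\alpha$ makes $\fil_t$-measurable decisions that may depend on raw observations and stream labels, not merely on the current sorted tuple, so its sorted-tuple process is \emph{not} Markov. Your subsequent coupling argument does not actually use this Markov property (the selection step only uses that $\TT'$'s choice is an LFNR-valid subtrim of $\mathcal W_t'$, and the evolution step only uses monotonicity of $\Pi$), so the approach is salvageable; but to make it rigorous you must run $\TT'$ on its full native filtration and construct $\TTp$'s data on an enlarged space, then verify that the monotone coupling of $\Pi(w,\cdot)$ with $\Pi(v,\cdot)$ delivers the correct marginal $\Pi(v,\cdot)$ for $\TT'$'s side \emph{irrespective of} $w$, so that $\TT'$'s law is preserved even though the pairing with $\TTp$'s streams shifts from step to step. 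You do not supply this verification, and the Markov-reduction sentence obscures that it is needed.

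The paper sidesteps this difficulty altogether with a ``switching'' induction (Theorem~\ref{prop:stronger}). Rather than coupling $\TTp$ directly with $\TT'$, it inducts on the look-ahead horizon $s$ and inserts the intermediate procedures $\TT^{\strGfull{t_0}}$ and $\TT^{\strGfull{t_0+1}}$, which share $\TT'$'s history through time $t_0$ and then both follow the proposed rule. Because both compared processes are genuinely Markov on sorted tuples after the switch (Lemma~\ref{lemma:kko}), the monotone-coupling machinery (Propositions~\ref{prop:CouplingForProp} and~\ref{prop:monotone-o}) applies without having to track an arbitrary procedure's filtration inside the coupling. This device also yields the stronger conditional statement~\eqref{eq:induction-statement} and the refined comparison of Theorem~\ref{thm:comparison}, which your direct route does not give.
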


Although model {\mhomo} seems relatively simple, the uniform optimality result established in Theorem~\ref{thm:uniformy optimality} is highly non-trivial and requires non-standard technical tools for the proof, such as the monotone coupling on a partially ordered space for comparing stochastic processes of different dimensions.
 Part of the challenge is from the compound nature of the problem. Below we intuitively explain
 why standard techniques for justifying the optimality of single-stream sequential change-point detection methods do not apply to our problem.
Heuristically, for a given $t$, a larger value of $W_{k,t}=\prob(\tau_k\leq t-1|\mathcal{F}_t)$ suggests a
higher chance that a change point has already taken place for the $k$th data stream. This is why the proposed procedure chooses to detect streams with the largest posterior probabilities $W_{k,t}$. 
Indeed, this update rule  {has been proven optimal for a single change detection problem under a Bayesian formulation \citep{shiryaev1963optimum} and is locally optimal according to Proposition~\ref{prop:size}.} However, the local optimality does not necessarily imply uniform optimality. To show uniform optimality, one needs to look into the future. More specifically, we need to deal with the situation where
a large value of  $W_{k,t}$ is due to random noise
and the posterior probability of the stream
 may become small at a future time point.
In other words, supposing that $W_{k_1,t}>W_{k_2,t}$,  we need to show that
it is more optimal to detect $k_1$ than $k_2$ at time $t$ under our optimality criteria,
even though $W_{k_1,t+s}<W_{k_2,t+s}$ can happen with  high probability for some $s >0$. To establish the uniform optimality, we need the $W_{k,t}$ process generated by the proposed procedure to have some stochastically monotone property. 
A proof sketch
for Theorem~\ref{thm:uniformy optimality} and a complete proof are given in the supplementary material, where some new techniques are established for the  monotone coupling of stochastic processes on a partially ordered space.






In Theorem~\ref{thm:uniformy optimality}, the assumptions required by the model {\mhomo} may be relaxed. {By examining the current proof and the fact that the updating rule \eqref{eq:posterior-update} for the posterior probabilities can be extended to non-geometric priors, we believe that
the  uniform optimality can still be proved, if the change points are i.i.d. following some prior distribution with support $\{0,1,2,\dots\}$, for example, a negative binomial distribution. Similarly, the optimality results may be extended to the case where $p_{k,t}=p_t$ and $q_{k,t}=q_t$ for some time-dependent functions $p_t$ and $q_t$. }  On the other hand, we believe that it is necessary to assume the data streams $\{X_{k,t}\}_{t\geq 1}$ are identically distributed for different $k$ for the proposed method to be uniformly optimal.
Indeed, if the processes $\{X_{k,t}\}_{t\geq 1}$ are not identically distributed, then
 there may not exist a uniformly optimal procedure. One such example is given below.



\begin{example}[Non-existence of uniformly optimal procedure]\label{example:noopt}
Let $K=4$ and $\tau_k$s be independent, for $k = 1, 2,3, 4$. The change-point distributions satisfy
$\prob(\tau_k\geq 4)=0$ for $k=1,2,3, 4$. For $m=0,1,2,3$ and $k = 1, 2, 3, 4$,  the probabilities $\prob(\tau_k=m)$ are given below. In addition, let $X_{k,t}|t\leq \tau_k\sim \text{Bernoulli}(0.5)$ and $X_{k,t}|t>\tau_k\sim\text{Bernoulli}(0.51)$ for $k=1,2,3,4$. Finally, we set $\alpha=0.34$.
\begin{table}[h]
\centering
\begin{tabular}{c|cccc}
\hline
$\prob(\tau_k=m)$ & $m=0$  & $m=1$ & $m=2$ & $m=3$  \\ \hline
$k=1$                                            & 0.1  & 0 & 0   & 0.9    \\
$k=2$                                            & 0.4  & 0.6 & 0   & 0    \\
$k=3$                                            & 0.43 & 0.57   & 0   & 0 \\
$k=4$                                            & 0.55 & 0   & 0   & 0.45 \\ \hline
\end{tabular}
\end{table}
This model is not in {\mhomo}, as the change points are not identically distributed.
Enumerating all elements in $\adset_{\alpha}$, we have
$$
	\sup_{\TT\in\adset_{\alpha}}\expe\left(\uti_2(\TT)\right)=7
	\text{ and }	\sup_{\TT\in\adset_{\alpha}}\expe\left(\uti_4(\TT)\right)=10.
$$
However, there is no such a sequential procedure maximizing stream utilization at both $t=2$ and $t=4$. Consequently, there does not exist a uniformly optimal procedure in this example. 
The calculation for this example is provided in the supplementary material.
\end{example}
\begin{remark}
	We remark that a similar algorithm can be given for controlling $\text{LFDR}_t$ and in the meantime achieving a similar local optimality property. However, as the LFDR is calculated based on the stopped data streams rather than the active ones, the current techniques for proving uniform optimality no longer apply. The theoretical properties of the 
LFDR-control procedure is left for future investigation. 
\end{remark}
\subsection{Asymptotic Theory}
In modern multi-stream change-point detection problems, the number of data streams can be large.
To enhance our understanding of the proposed method 
in large-scale applications,
we study  the asymptotic properties of the proposed method when the number of streams $K$ goes to infinity.

We first study the structure of $\TTp$ under model \mhomo.
We define the following process
\begin{equation*}
	V_0=0 \text{ and } V_{t+1}=\frac{q(X_{1,t+1})/p(X_{1,t+1})}{(1-\theta)(1-V_{t})/(\theta + (1-\theta)V_t)+q(X_{1,t+1})/p(X_{1,t+1})},
\end{equation*}
where parameter $\theta$ and densities $p(\cdot)$ and $q(\cdot)$ are given by the model \mhomo. We further define $\lambda_{0}=1$ and
\begin{equation}\label{eq:threholds}
	 \lambda_{t}=\sup\big\{\lambda: \lambda\in[0,1] \text{ and }\expe(V_{t}\mid V_{t}\leq \lambda, V_s\leq \lambda_s, 0\leq s \leq t-1)\leq \alpha
	\big\}
\end{equation}
for $t=1,2,\cdots.$ Theorem~\ref{thm:large-sample-iid} below shows that when $K$ grows to infinity, the proposed procedure  $\TTp$
converges to a limiting procedure $\TT^\dag$, for which the choice of index set $S_{t+1}^\dag$ is given by
$S_{t+1}^\dag = \big\{k \in S_t^\dag: W_{k,t}\leq \lambda_{t}\big\}.$
It suggests that when $K$ is large, we can replace the proposed procedure $\TTp$ by the limiting procedure $\TT^\dag$.
The latter is computationally faster, as the thresholds $\lambda_t$ can be computed offline and the updates for streams can be computed in parallel.
We make the following technical assumption.
\begin{itemize}
	\item [A1.] For $Z_1$ following density function $p(\cdot)$ and $Z_2$ following density function $q(\cdot)$, the likelihood ratios ${q(Z_1)}/{p(Z_1)}$ and ${q(Z_2)}/{p(Z_2)}$ have continuous and strictly positive density functions over $\mathbb{R}_+$ (with respect to the Lebesgue measure).
\end{itemize}
The above assumption is easily satisfied by continuous random variables. For example, it is satisfied when $p(\cdot)$
and $q(\cdot)$ are two normal density functions  with different means and/or variances.



\begin{theorem}\label{thm:large-sample-iid}
Assume that model {\mhomo} holds and Assumption A1 is satisfied.
 To emphasize the dependence on $K$, we denote the proposed procedure by $\TTp_K$, the corresponding information filtration at time $t$ by $\mathcal F_{K,t}^{*}$, and the index set at time $t$ by
$S_{K,t}^{*}$. Then, the following results hold for each $t \geq 1$.
\begin{enumerate}
	\item $\lim_{K\to\infty} \hat{\lambda}_{K, t}=\lambda_{t}$ a.s., where $\hat{\lambda}_{K, t} = \max\{W_{k,t}: k\in S_{K,t+1}^{*}\}$
is the threshold used by $\TTp_K$.
\item $\lim_{K\to\infty}\lpcr_{t+1}(\TTp_K)= \expe(V_{t}\mid V_{s}\leq \lambda_{s},0\leq s\leq t)$ a.s. Moreover,
\begin{equation}\label{eq:cond-expe-expr}
	\expe(V_{t}\mid V_{s}\leq \lambda_{s},0\leq s\leq t)
	=
		\begin{cases}
	1-(1-\theta)^{t},~~ t< \frac{\log(1-\alpha)}{\log(1-\theta)},\\
	\alpha, ~~ t\geq\frac{\log(1-\alpha)}{\log(1-\theta)}.
	\end{cases}
\end{equation}
\item $\lim_{K\to\infty} K^{-1}\vert S_{K,t+1}^{*}\vert =\prob\left(V_1\leq \lambda_1,\cdots, V_t\leq \lambda_t \right)$ a.s.
\end{enumerate}

\end{theorem}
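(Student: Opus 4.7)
The plan is to proceed by induction on $t$, leveraging the i.i.d.\ structure of model \mhomo\ to reduce the joint behavior of all $K$ streams to an empirical process statement about i.i.d.\ copies of the scalar Markov chain $V_\cdot$. Specifically, for each $k$ introduce the auxiliary process $V_t^{(k)}$ obtained by running the update rule in Lemma~\ref{lem:update} with the data $\{X_{k,s}\}_{s\leq t}$ alone, as if stream $k$ were never deactivated. Because streams are i.i.d.\ under \mhomo, the processes $\{V_\cdot^{(k)}\}_{k=1}^K$ are i.i.d.\ copies of $V_\cdot$. Moreover, as long as stream $k$ has not yet been deactivated by $\TTp_K$, the posterior $W_{k,t}$ coincides with $V_t^{(k)}$. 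Consequently,
\[
k \in S^*_{K,t+1} \iff V_s^{(k)} \leq \hat{\lambda}_{K,s} \text{ for all } 0\leq s\leq t.
\]

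I would prove Part~1 by induction. The base case $t=0$ is immediate since $\hat{\lambda}_{K,0}=\lambda_0 = 1$. For the inductive step, assume $\hat{\lambda}_{K,s}\to\lambda_s$ a.s.\ for $s<t$. By the strong law of large numbers applied to the i.i.d.\ sample $\{(V_0^{(k)},\dots,V_t^{(k)})\}_{k=1}^K$, the joint empirical distribution of these vectors restricted to the random subsample $\{k: V_s^{(k)}\leq \hat{\lambda}_{K,s},\ s<t\}$ converges weakly to the joint law of $(V_0,\dots,V_t)$ conditional on $\{V_s\leq \lambda_s: s<t\}$. Algorithm~\ref{alg:one-step-rule1} selects $\hat{\lambda}_{K,t}$ as the largest empirical quantile of $V_t^{(k)}$ (over this subsample) at which the empirical cumulative mean remains at most $\alpha$. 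Assumption A1 makes the conditional c.d.f.\ of $V_t$ continuous and strictly increasing on its support, so a Glivenko-Cantelli argument combined with continuity of the implicit quantile functional yields $\hat{\lambda}_{K,t}\to \lambda_t$ a.s. The corner case $\lambda_t=1$ is handled separately: there the entire conditioned subsample is retained and $\hat{\lambda}_{K,t}=\max\{W_{k,t}: k\in S^*_{K,t+1}\}\to 1$, since Assumption A1 ensures that $V_t$ has support reaching $1$.

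Parts~2 and~3 then follow by one more application of the SLLN. For Part~2, $\lpcr_{t+1}(\TTp_K)$ is exactly the cumulative average of $\{V_t^{(k)}: V_s^{(k)}\leq \hat{\lambda}_{K,s},\, s\leq t\}$, which converges to the target conditional expectation by Part~1. For the explicit formula, note that $V_t=\prob(\tau_1<t\mid \fil_t)$ is a posterior probability, so $\expe(V_t)=\prob(\tau_1<t)=1-(1-\theta)^t$. When $t<\log(1-\alpha)/\log(1-\theta)$, this unconditional mean is strictly below $\alpha$, the supremum defining each $\lambda_s$ for $s\leq t$ is attained at $\lambda_s=1$, and the conditional expectation reduces to $1-(1-\theta)^t$. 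For larger $t$, thresholding becomes active and the conditional expectation equals $\alpha$ by the very definition of $\lambda_t$. For Part~3, the characterization of $S^*_{K,t+1}$ above together with the SLLN and the a.s.\ convergence $\hat{\lambda}_{K,s}\to\lambda_s$ gives $K^{-1}|S^*_{K,t+1}|\to \prob(V_s\leq \lambda_s,\ 0\leq s\leq t)$.

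The main obstacle will be turning the heuristic ``empirical quantile converges to population quantile'' into a rigorous statement that handles simultaneously (i)~the randomness in the earlier data-dependent thresholds $\hat{\lambda}_{K,s}$, $s<t$, which propagate through the inductive assumption; (ii)~the fact that the conditioning event itself is random and changes with $K$; and (iii)~the implicit, piecewise-constant definition of $\hat{\lambda}_{K,t}$ through a cumulative-average inequality. Resolving this requires a uniform Glivenko-Cantelli statement for the empirical c.d.f.\ of $V_t^{(k)}$ over the conditioned subsample, together with continuity of the map $\lambda \mapsto \expe(V_t \mid V_t\leq \lambda,\ V_s\leq \lambda_s,\ s<t)$; both ingredients ultimately trace back to Assumption~A1.
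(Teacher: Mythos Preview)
Your approach is essentially the paper's: introduce the i.i.d.\ auxiliary processes $V^{(k)}_\cdot$, identify $\hat\lambda_{K,t}$ and $\lambda_t$ as the same functional applied to the empirical and true laws of $(V_1,\dots,V_t)$, and combine a uniform Glivenko--Cantelli bound with continuity of that functional (which is where Assumption~A1 enters). The paper formalizes this by writing $\hat\lambda_{K,t}=\Lambda_t(\prob_K)$ and $\lambda_t=\Lambda_t(\prob^*)$ for an explicit functional $\Lambda_t$ on probability measures, then proving sup-norm continuity of $\Lambda_t$ at $\prob^*$; your inductive ``empirical quantile $\to$ population quantile'' argument is the same content.

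There is, however, a genuine gap in your derivation of the explicit formula~\eqref{eq:cond-expe-expr}. You assert that for $t\ge \log(1-\alpha)/\log(1-\theta)$ ``thresholding becomes active and the conditional expectation equals $\alpha$ by the very definition of $\lambda_t$.'' That conclusion requires $\lambda_t<1$, which you have not shown beyond the first time $t_0$ at which the \emph{unconditional} mean $\expe V_{t_0}$ exceeds $\alpha$. At $t_0+1$ you are now conditioning on $\{V_{t_0}\le\lambda_{t_0}\}$ with $\lambda_{t_0}<1$, and it is not automatic that $\expe(V_{t_0+1}\mid V_s\le\lambda_s,\ s\le t_0)>\alpha$; conditioning on a small $V_{t_0}$ could in principle pull the next mean back below $\alpha$. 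The paper closes this by the identity
\[
\expe\big(V_t\mid X_{1,1:t-1}\big)=\prob(\tau_1\le t-1\mid X_{1,1:t-1})=\theta+(1-\theta)V_{t-1},
\]
which uses the memoryless property of the geometric prior, and then the tower property gives
\[
\expe\big(V_t\mid \VV_{t-1}\le\llll_{t-1}\big)=\theta+(1-\theta)\,\expe\big(V_{t-1}\mid \VV_{t-1}\le\llll_{t-1}\big)=\theta+(1-\theta)\alpha>\alpha,
\]
so by induction $\lambda_t<1$ and the conditional mean equals $\alpha$ for all $t\ge t_0$. You need this step (or an equivalent one) to complete the argument.
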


We remark that according to the definition of $\lambda_t$ and the second statement of Theorem~\ref{thm:large-sample-iid},
when $t < {\log(1-\alpha)}/{\log(1-\theta)}$, 
$\lim_{K\to\infty}\lpcr_{t+1}(\TTp_K) <\alpha$ a.s. and no deactivation of streams is needed yet.
Otherwise, $\lim_{K\to\infty}\lpcr_{t+1}(\TTp_K)=\alpha$  a.s., which is achieved by deactivating suspicious  streams.

We also provide asymptotic theory for a special case of Example~\ref{example:model-partial} when the change points are completely dependent, i.e., $\tau_1=\cdots=\tau_K=\tau_0$.
We make the following assumption.
	\begin{itemize}
		\item [A2.] For $Z_1$ following density $p(\cdot)$ and $Z_2$ following density $q(\cdot)$, the density functions satisfy $\expe\left(\log\frac{p(Z_1)}{q(Z_1)}\right)>0, \expe\left(\log\frac{q(Z_2)}{p(Z_2)}\right)>0$, $\expe\left(\log\frac{p(Z_1)}{q(Z_1)}\right)^2<\infty$,  and $\expe\left(\log\frac{q(Z_2)}{p(Z_2)}\right)^2<\infty.$
	\end{itemize}
	Note that $\expe\left(\log({p(Z_1)}/{q(Z_1)})\right)$ and $\expe\left(\log({q(Z_2)}/{p(Z_2)})\right)$ are the Kullback-Leibler divergence between $p(\cdot)$ and $q(\cdot)$. Requiring them to be positive is the same as requiring $p(\cdot)$ and $q(\cdot)$ to be densities of two different distributions.
	

\begin{theorem}\label{thm:large-complete-dependent}
Suppose that data follow a special case of the model given in Example \ref{example:model-partial} when $\eta = 1$ and
$\tau_0\sim Geom(\theta)$, and Assumption A2 holds.
 Let
	\begin{equation*}
		W_t=\prob(\tau_0 < t\mid X_{k,s},1\leq k\leq K,1\leq s\leq t),\quad 		T=\min\{t: W_t>\alpha\}.
	\end{equation*}
	
	Then, $\TTp_K=(T,\cdots, T)$.
Moreover, the following asymptotic results hold.
	\begin{enumerate}
		\item $\lim_{K\to\infty} (T-\tau_0) =1$ a.s.,
		\item $\lim_{K\to\infty}\lpcr_{t+1}(\TTp_K)=0$ a.s., 
	\item $ \lim_{K\to\infty} K^{-1}\vert S_{K,t+1}^{*}\vert = \ind(\tau_0\geq t)$ a.s.
	\end{enumerate}
\end{theorem}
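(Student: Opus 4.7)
The plan is to show that the proposed procedure collapses to a single common stopping time $T$, and then to use Bayesian analysis together with the strong law of large numbers (SLLN) to establish that $T = \tau_0 + 1$ eventually almost surely as $K \to \infty$; all three asymptotic claims then follow. For the structural reduction: under complete dependence, $W_{k,t} = \prob(\tau_0 < t \mid \fil_t) =: W_t$ is the same across every active stream $k$ by exchangeability. Consequently, the sorted cumulative averages $R_n$ in Algorithm~\ref{alg:one-step-rule1} are all equal to $W_t$, so the algorithm either keeps every active stream (if $W_t \leq \alpha$) or drops them all (if $W_t > \alpha$). Inductively, every stream shares the same stopping time $T = \min\{t : W_t > \alpha\}$, so $\TTp_K = (T, \ldots, T)$ and $\vert S_{K,t+1}^{*}\vert = K\ind(T > t)$.

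The main technical step is the asymptotic analysis of $W_t$. By Bayes' formula and the geometric prior on $\tau_0$,
\begin{equation*}
W_t = \frac{\sum_{m=0}^{t-1} \theta(1-\theta)^m R_{m,t}}{\sum_{m=0}^{t-1} \theta(1-\theta)^m R_{m,t} + (1-\theta)^t},
\qquad
R_{m,t} = \prod_{k=1}^{K} \prod_{s=m+1}^{t} \frac{q(X_{k,s})}{p(X_{k,s})}.
\end{equation*}
Condition on $\tau_0 = m_0$. Then the log-ratios $\log(q(X_{k,s})/p(X_{k,s}))$ are i.i.d.\ across $k$ with mean $-\expe_p[\log(p(Z_1)/q(Z_1))] < 0$ if $s \leq m_0$ and $\expe_q[\log(q(Z_2)/p(Z_2))] > 0$ if $s > m_0$; Assumption A2 guarantees the strict inequalities and finite second moments, so the SLLN applies and $K^{-1}\log R_{m,t}$ converges a.s.\ to a deterministic limit $L_{m,t}$. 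For $t \leq m_0$ and $m < t$, the limit $L_{m,t} < 0$, so each numerator term of $W_t$ decays exponentially in $K$ while the denominator stays bounded below by $(1-\theta)^t > 0$; hence $W_t \to 0$ a.s. For $t = m_0 + 1$, the $m = m_0$ term of the numerator has $L_{m_0, m_0+1} > 0$, so $R_{m_0, m_0+1} \to \infty$ exponentially, forcing $W_t \to 1$ a.s.

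Since $\tau_0$ is geometric and therefore a.s.\ finite, conditioning on $\{\tau_0 = m_0\}$ reduces the question to intersecting the finitely many SLLN events for $t = 1, \ldots, m_0 + 1$, which still has full conditional probability. On that intersection, $W_t < \alpha$ for all $t \leq m_0$ and $W_{m_0 + 1} > \alpha$ once $K$ is large enough, so $T = m_0 + 1$ eventually; this proves part~1. Part~3 is then immediate: $K^{-1}\vert S_{K,t+1}^{*}\vert = \ind(T > t) = \ind(\tau_0 \geq t)$ for $K$ large. For part~2, exchangeability together with $\tau_k \equiv \tau_0$ reduces $\pcr_{t+1}(\TTp_K)$ to $\ind(T > t)\,\ind(\tau_0 < t)$, and since $\ind(T>t)$ is $\fil_t$-measurable we obtain $\lpcr_{t+1}(\TTp_K) = \ind(T > t)\, W_t$. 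If $\tau_0 < t$ then $T \leq t$ eventually and the indicator vanishes; if $\tau_0 \geq t$ then $t \leq \tau_0$ and the previous paragraph yields $W_t \to 0$. In either case $\lpcr_{t+1}(\TTp_K) \to 0$ a.s.

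The main obstacle is the careful treatment of the ratio form of $W_t$, specifically showing that the transition between the two regimes $t \leq \tau_0$ and $t = \tau_0 + 1$ is sharp enough to pin $T$ down to $\tau_0 + 1$ for large $K$. This is tractable thanks to the strict positivity of both Kullback-Leibler divergences guaranteed by Assumption A2, which produces exponential-in-$K$ separation between the pre- and post-change likelihood products, and to the a.s.\ finiteness of $\tau_0$, which limits the analysis to finitely many SLLN events per realization.
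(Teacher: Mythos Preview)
Your proof is correct and follows essentially the same approach as the paper's: both reduce to a common stopping time via the identical posteriors, express $W_t$ through Bayes' formula with likelihood-ratio products, apply the SLLN conditionally on $\{\tau_0=m_0\}$ to conclude $W_t\to\ind(t\ge\tau_0+1)$ a.s., and then read off the three limits. The only cosmetic difference is that the paper explicitly introduces the full-data posterior $\tilde W_{k,t}$ and notes it agrees with the filtration-based $W_{k,t}$ for $t\le T$, whereas you invoke the full-data Bayes formula directly---fine in substance, since only $t\le T$ matters for determining $T$.
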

According to the above theorem, the detection time in the proposed procedure is the same for all the data streams. This detection rule is the same as the classical Shiryaev procedure \citep{shiryaev1963optimum} for a single data stream. It thus shares all the optimality properties of the Shiryaev procedure. 
We further remark that the last limit in the above theorem is non-degenerate in the sense that it is a Bernoulli random variable, rather than a constant as in Theorem~\ref{thm:large-sample-iid}.

{\section{Additional Theoretical Results}\label{sec:additional}
In this section, we  give extensions of Theorem~\ref{thm:uniformy optimality}. 
We first extend the uniform optimality result in Theorem~\ref{thm:uniformy optimality} to two other performance measures, $\text{RL}_t(\TT)=\sum_{k=1}^K (T_k\wedge \tau_k\wedge t)
$ and $\textrm{CD}_t=K - |S_t|$, as discussed in Section~\ref{subsec:criteria}.

\begin{theorem}\label{thm:uniform-optimal-other}
 Under model {\mhomo}, the following equations hold for all $t$,
\begin{equation}
	\expe(\textrm{RL}_t(\TTp))=\sup_{\TT\in \adset_{\alpha}} \expe(\textrm{RL}_t(\TT)) \text{ and } \expe(\textrm{CD}_t(\TTp))=\inf_{\TT\in \adset_{\alpha}} \expe(\textrm{CD}_t(\TT)).
\end{equation}
\end{theorem}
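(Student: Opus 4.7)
The plan is to leverage (and slightly enrich) the monotone coupling constructed in the proof of Theorem~\ref{thm:uniformy optimality}. Both claims will reduce to pointwise almost-sure comparisons once $\TTp$ and an arbitrary $\TT\in\adset_{\alpha}$ are realized on a common probability space.

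For the $\textrm{CD}_t$ claim, observe that $\textrm{CD}_t(\TT)=K-|S_t(\TT)|$, so the statement is equivalent to $\expe[|S_t(\TTp)|]\ge\expe[|S_t(\TT)|]$ for every $t$ and every $\TT\in\adset_{\alpha}$. This is an immediate byproduct of the monotone coupling behind Theorem~\ref{thm:uniformy optimality}: that argument does not merely compare the cumulative sums $\uti_t=\sum_{s=1}^t|S_s|$ in expectation, but realizes $\TTp$ and $\TT$ jointly so that $|S_s(\TTp)|\ge|S_s(\TT)|$ almost surely at every $s$. Taking $s=t$ and integrating delivers the bound.

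For the $\textrm{RL}_t$ claim, I first rewrite
\begin{equation*}
\textrm{RL}_t(\TT)=\sum_{s=1}^t N_s(\TT),\qquad N_s(\TT)=\bigl|\{k\in S_s(\TT):\tau_k\ge s\}\bigr|,
\end{equation*}
so that $\textrm{RL}_t$ is the cumulative count of active pre-change observations through time~$t$. The goal is to show $N_s(\TTp)\ge N_s(\TT)$ a.s.\ under the coupling for every $s\le t$, from which the theorem follows by summing and taking expectation. To establish this, I would promote the coupling of Theorem~\ref{thm:uniformy optimality} from a comparison of active-set sizes to a stream-level injection $\phi_s:S_s(\TT)\hookrightarrow S_s(\TTp)$ with the change-point monotonicity $\tau_{\phi_s(k)}\ge\tau_k$. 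When such an injection exists, every pre-change index of $\TT$ (with $\tau_k\ge s$) is mapped to a pre-change index of $\TTp$, yielding $N_s(\TTp)\ge N_s(\TT)$. The intuition is consistent with Algorithm~\ref{alg:one-step-rule1}: $\TTp$ retains the streams with the smallest posterior probabilities, which under model \mhomo{} are, by a monotone-likelihood-ratio argument and the i.i.d.\ change-point prior, stochastically the most likely to have late change points.

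The main obstacle is constructing this change-point-monotone pairing. It requires enlarging the state space of the partial-order coupling used for Theorem~\ref{thm:uniformy optimality} so that it tracks not only the surviving index sets but also the change-point labels attached to them, and then verifying that the enriched monotone structure is preserved by the one-step update in Algorithm~\ref{alg:one-step-rule1}. The identical-distribution assumption of model \mhomo{}, which allows streams to be freely relabeled without altering the joint law of the data and change points, should be precisely what makes the enlargement go through, mirroring the role exchangeability plays in the proof of Theorem~\ref{thm:uniformy optimality}. Once the enriched coupling is in place, the remainder of the argument is a straightforward induction on $t$ paralleling the one used there.
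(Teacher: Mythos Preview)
Your treatment of $\textrm{CD}_t$ reaches the right conclusion but misdescribes the machinery. The proof of Theorem~\ref{thm:uniformy optimality} (via Theorem~\ref{prop:stronger}) does yield $\expe|S_s(\TTp)|\ge\expe|S_s(\TT)|$ for each fixed $s$, which is all you need; but it does \emph{not} build a single coupling of $\TTp$ and an arbitrary $\TT$ on which $|S_s(\TTp)|\ge|S_s(\TT)|$ holds pathwise. The argument is an induction that, at each step, compares the switching procedures $\TT^{\strGfull{t_0}}$ and $\TT^{\strGfull{t_0+1}}$ through a coupling of their \emph{posterior vectors}, and then passes to expectations. Once the two procedures have different active sets they observe different data, and a pathwise size comparison can fail on the original space. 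So for $\textrm{CD}_t$ just cite the per-$s$ expectation inequality and stop.

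For $\textrm{RL}_t$ your decomposition $\textrm{RL}_t=\sum_{s\le t}N_s$ matches the paper, but the route diverges and yours has a real gap. You propose to enrich the coupling so that it carries a change-point-monotone injection $\phi_s:S_s(\TT)\hookrightarrow S_s(\TTp)$ with $\tau_{\phi_s(k)}\ge\tau_k$. You correctly flag this as the main obstacle, and it is: the existing coupling lives on the space $\spaceo$ of sorted posterior vectors and has forgotten both stream identities and change points, so ``enlarging the state space'' means rebuilding the entire monotone-kernel argument with change points attached. Nothing in the paper supplies this, and since $S_s(\TT)\not\subseteq S_s(\TTp)$ in general even at the first nontrivial step, the injection cannot be the identity and the required change-point comparison is not available on the original space. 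The paper avoids the whole issue by conditioning out the change points: using the tower property,
\[
\expe\bigl[N_s(\TT)\bigr]=\expe\Bigl[\sum_{k\in S_s}\bigl(1-\ind(\tau_k<s)\bigr)\Bigr]=\expe\Bigl[\sum_{k\in S_s}(1-W_{k,s})\Bigr]=\expe\bigl[\Psi\bigl([W_{S_s,s}]\bigr)\bigr],
\]
where $\Psi(\ww)=\sum_i(1-w_i)$. The point is that $\Psi$ is monotone with respect to the partial order $\lleq$ on $\spaceo$ (if $\ww\lleq\ww'$ then $\Psi(\ww)\ge\Psi(\ww')$), so the \emph{same} coupling and induction that proved Theorem~\ref{prop:stronger} for the functional $\card(\cdot)$ go through verbatim with $\card$ replaced by $\Psi$. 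No new coupling, no change-point tracking---just a different monotone functional plugged into the existing argument.
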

}

We then extend Theorem~\ref{thm:uniformy optimality} by investigating a comparison between an arbitrary sequential procedure in $\adset_{\alpha}$ and a procedure which switches from this procedure to the proposed procedure after a certain time point. This result provides further insights into the proposed procedure.
Specifically, we use
$\TT^{\strA}\in\adset_{\alpha}$ to denote an arbitrary sequential procedure which controls the LFNR. 
We further consider a procedure
$\TT^{\strGfull{t_0}}$, which takes the same procedure as $\TT^{\strA}$ for $t = 1, ..., t_0$. After time $t_0 + 1$ and onwards, each step of
$\TT^{\strGfull{t_0}}$ follows the proposed update rule in Algorithm~\ref{alg:one-step-rule1}.
Theorem~\ref{thm:comparison} compares four sequential procedures, including
$\TT^{\strA}$, $\TT^\strGfull{t_0}$, $\TT^\strGfull{t_0+1}$, and $\TTp$.

%

\begin{theorem}\label{thm:comparison}
	Let $\TT^{\strA}\in \adset_{\alpha}$ be an arbitrary sequential procedure. Further let
$\TT^{\strGfull{t_0}}$ and $\TT^{\strGfull{t_0+1}}$ be the switching procedures described above, with switching time $t_0$ and $t_0 + 1$, respectively, for some $t_0\geq0$.
Then, for all $t=1,2,\cdots$,  $\TT^{\strGfull{t_0}}, \TT^{\strGfull{t_0+1}} \in \adset_{\alpha}$ and under model {\mhomo}
\begin{equation*}
	 		\expe\left(\uti_t(\TT^{\strA})\right)\leq\expe\left(\uti_t({\TT^\strGfull{t_0+1}})\right)\leq  \expe\left(\uti_t({\TT^\strGfull{t_0}})\right)\leq \expe\left(\uti_t(\TTp)\right).
	 	\end{equation*}
	 	
\end{theorem}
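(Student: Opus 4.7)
The plan is to reduce the four-inequality chain to a single ``one-step-switch'' inequality, namely $\expe(\uti_t(\TT^{\strGfull{t_0+1}})) \leq \expe(\uti_t(\TT^{\strGfull{t_0}}))$, and then prove this via a monotone coupling. The admissibility $\TT^{\strGfull{t_0}}, \TT^{\strGfull{t_0+1}} \in \adset_{\alpha}$ is routine: the LFNR constraint holds on the arbitrary-rule portion because $\TT^{\strA} \in \adset_{\alpha}$, and on the proposed-rule portion by Proposition~\ref{prop:control}. For the chain itself, observe that $\TT^{\strGfull{t_0}}$ and $\TT^{\strA}$ agree through time $t_0$, so $\uti_t(\TT^{\strGfull{t_0}}) = \uti_t(\TT^{\strA})$ whenever $t_0 \geq t$. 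Consequently, once the one-step-switch inequality is in hand, the first inequality of the theorem follows by iterating it from $\TT^{\strGfull{t}}$ (which has the same $\uti_t$ as $\TT^{\strA}$) down to $\TT^{\strGfull{t_0+1}}$, and the third by iterating it from $\TT^{\strGfull{t_0}}$ down to $\TT^{\strGfull{0}} = \TTp$.

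To prove the key one-step-switch inequality, I would exploit that $\TT^{\strGfull{t_0}}$ and $\TT^{\strGfull{t_0+1}}$ share their history up to time $t_0$ and use the same update rule from time $t_0+2$ onwards; they differ only in how $S_{t_0+1}$ is formed (Algorithm~\ref{alg:one-step-rule1} versus $\TT^{\strA}$). By Proposition~\ref{prop:size}, $|S_{t_0+1}^{\strGfull{t_0}}| \geq |S_{t_0+1}^{\strGfull{t_0+1}}|$ a.s., with $S_{t_0+1}^{\strGfull{t_0}}$ consisting of the streams with the smallest posterior probabilities. I would then construct, under model \mhomo, a monotone coupling with respect to a partial order on index-set configurations that intuitively reads ``larger active set whose posteriors are pointwise smaller.'' Exchangeability of streams under \mhomo lets me match streams across the two procedures so that the ordering of posteriors within the two active sets is preserved, and couple the future observations $X_{k,s}$ so that the per-stream posterior recursion in Lemma~\ref{lem:update} preserves that order along each matched pair. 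Because Algorithm~\ref{alg:one-step-rule1} depends on the active streams only through the multiset of posterior probabilities and selects by ascending posterior, the one-step rule is monotone under this partial order, and the ordering persists at every subsequent step. This yields $|S_s^{\strGfull{t_0}}| \geq |S_s^{\strGfull{t_0+1}}|$ a.s.\ for all $s \geq t_0+1$ under the coupling; summing over $s = 1, \ldots, t$ and taking expectations gives the one-step-switch inequality.

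The main obstacle will be the coupling construction. After the initial switch the two processes live on state spaces of different dimensions (different active-set sizes), so a naive pairwise comparison fails. This is precisely the ``monotone coupling on a partially ordered space'' technique flagged in the discussion after Theorem~\ref{thm:uniformy optimality}. The delicate sub-steps are (i) defining a partial order under which the one-step rule is monotone, (ii) verifying that under \mhomo the observations of streams active in one process but deactivated in the other can be drawn in a way compatible with this order, and (iii) propagating the order through the posterior recursion. Once the coupling is established, the remaining pieces---admissibility, the reduction to one-step switches, and the summation over $s$---are essentially bookkeeping.
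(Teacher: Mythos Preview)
Your proposal is correct and rests on the same technical core as the paper: the partial order ``larger active set with pointwise smaller ordered posteriors'' is exactly the order $\lleq$ on $\spaceo$, and the coupling you sketch is what the paper establishes in Propositions~\ref{prop:CouplingForProp} and~\ref{prop:monotone-o} (base-case comparison at time $t_0+1$, then propagation via stochastic monotonicity of the posterior Markov kernel and monotonicity of the selection map $\Ho$).

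The organization, however, differs. The paper does not telescope over switching times. Instead it first proves the stronger conditional statement of Theorem~\ref{prop:stronger}, $\expe[|S^{\strA}_{t_0+s}|\mid\fil^{\strA}_{t_0}]\le\expe[|S^{\strGfull{t_0}}_{t_0+s}|\mid\fil^{\strA}_{t_0}]$, by induction on $s$; the induction step combines the coupling (your one-step switch, their ``Step~2'') with the induction hypothesis applied at $t_0+1$ (``Step~1''). Theorem~\ref{thm:comparison} then follows by three applications of Theorem~\ref{prop:stronger} with different base procedures. Your route bypasses Theorem~\ref{prop:stronger}: you extract the one-step-switch inequality $\expe\uti_t(\TT^{\strGfull{r+1}})\le\expe\uti_t(\TT^{\strGfull{r}})$ directly from the pathwise coupling and telescope from $r=t$ down to $r=0$. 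This is cleaner for Theorem~\ref{thm:comparison} alone, but it does not deliver the conditional comparison of Theorem~\ref{prop:stronger}, which the paper also uses (e.g., for Theorem~\ref{thm:uniform-optimal-other}). One technical point to keep track of in your route: gluing the conditional coupling at time $t_0+1$ with the Markov-chain coupling for $s\ge 1$ into a single pathwise coupling valid for all $s$ requires that the latter be chosen measurably in its initial condition; this is standard but should be stated.
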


The above theorem implies that,
under model \mhomo, $\TT^{\strGfull{t_0}}$ is uniformly better than
$\TT^{\strA}$. 
It also 
suggests to switch to the proposed procedure as soon as possible, if one cannot use the proposed procedure at the beginning due to practical constraints. 
Theorems~\ref{thm:uniformy optimality} and \ref{thm:comparison} are implied by the next theorem.

\begin{theorem}\label{prop:stronger}
Suppose that 
model {\mhomo} holds.
	For any $t_0, s\geq 0$ and any sequential detection procedure $\TT^{\strA}\in\adset_{\alpha}$, let
	$\fil^\strA_t$ be the information filtration  and $S^{\strA}_t$ be the set of active streams at time $t$ given by $\TT^{\strA}$.
	Then,
\begin{equation}\label{eq:induction-statement}
		\expe\left[|S_{t_0+s}^{\strA}|\middle| \fil^\strA_{{t_0}}\right]
				\leq \expe\left[|S_{t_0+s}^{\strGfull{t_0}}|\middle| \fil^\strA_{t_0}\right] \text{ a.s.}
	\end{equation}
\end{theorem}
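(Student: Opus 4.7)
The proof is by induction on $s$, with a monotone coupling argument providing the key technical step. For $s=0$, the two procedures coincide through time $t_0$, so $|S_{t_0}^{\strA}| = |S_{t_0}^{\strGfull{t_0}}|$ almost surely. For the inductive step, assume the theorem holds with $s-1$ in place of $s$, for every starting time and every procedure in $\adset_{\alpha}$. Applying the tower property together with this induction hypothesis to $\TT^{\strA}$ at starting time $t_0+1$ (and using $\fil^{\strGfull{t_0+1}}_{t_0+1} = \fil^{\strA}_{t_0+1}$) gives
\begin{equation*}
\expe[|S_{t_0+s}^{\strA}| \mid \fil^{\strA}_{t_0}] \leq \expe[|S_{t_0+s}^{\strGfull{t_0+1}}| \mid \fil^{\strA}_{t_0}],
\end{equation*}
so it remains to show $\expe[|S_{t_0+s}^{\strGfull{t_0+1}}| \mid \fil^{\strA}_{t_0}] \leq \expe[|S_{t_0+s}^{\strGfull{t_0}}| \mid \fil^{\strA}_{t_0}]$. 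The two procedures on each side of this remaining inequality agree with $\TT^{\strA}$ through time $t_0$, both apply Algorithm~\ref{alg:one-step-rule1} from time $t_0+2$ onward, and differ only in the choice of active set at time $t_0+1$.

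Introduce the partial order $(A,W^A) \lleq (B,W^B)$ on pairs of (active set, posterior vector) defined by the existence of an injection $\phi : A \to B$ with $W_a^A \geq W_{\phi(a)}^B$ for every $a \in A$. Almost surely, the active sets at time $t_0+1$ satisfy $(S_{t_0+1}^{\strGfull{t_0+1}}, W_{\cdot,t_0}) \lleq (S_{t_0+1}^{\strGfull{t_0}}, W_{\cdot,t_0})$: Algorithm~\ref{alg:one-step-rule1} makes $S_{t_0+1}^{\strGfull{t_0}}$ equal to the set of $n^\ast$ streams in $S_{t_0}^{\strA}$ with the smallest posteriors, while Proposition~\ref{prop:size} gives $|S_{t_0+1}^{\strGfull{t_0+1}}| = |S_{t_0+1}^{\strA}| \leq n^\ast$, and matching the streams within each set in increasing order of posterior provides the required injection, since the $i$-th smallest posterior in any subset of $S_{t_0}^{\strA}$ is at least the $i$-th smallest over all of $S_{t_0}^{\strA}$.

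The heart of the argument is a monotone coupling on this partially ordered space of varying-dimensional active-set configurations, under which $\lleq$ is preserved by every joint step of the proposed rule. In each step, matched streams have their next observations coupled via the quantile transform of the conditional density $[W+(1-W)\theta]\,q(\cdot) + (1-W)(1-\theta)\,p(\cdot)$ of $X_{k,t+1}$ given $W_{k,t}=W$ under model $\mhomo$; this family has monotone likelihood ratio in $W$ with respect to $r(x)=q(x)/p(x)$, so the coupled likelihood ratios satisfy $r_a \geq r_{\phi(a)}$ almost surely. Because the Bayesian update \eqref{eq:posterior-update} is monotone in both $W$ and $r$, matched posteriors remain ordered after updating, and a fresh application of Algorithm~\ref{alg:one-step-rule1} then preserves $\lleq$ with the inherited sorted matching, because a pointwise-smaller sorted sequence of posteriors produces pointwise-smaller cumulative averages and hence a weakly larger admissible cardinality. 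Unmatched streams in the larger set draw fresh independent observations. Iterating this one-step preservation gives $|S_{t_0+s}^{\strGfull{t_0+1}}| \leq |S_{t_0+s}^{\strGfull{t_0}}|$ almost surely under the coupling, hence in expectation. The principal obstacle is the coupling construction itself: the two processes live on state spaces of different, independently shrinking dimensions (deactivations on the two sides need not synchronize), so one must verify that the canonical sorted matching can be rebuilt at each step while respecting the correct marginal laws of both processes.
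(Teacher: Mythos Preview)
Your proposal is correct and follows essentially the same route as the paper. The paper also inducts on $s$, uses the tower property with the hypothesis at $t_0+1$ to compare $\TT^{\strA}$ with $\TT^{\strGfull{t_0+1}}$, and then compares $\TT^{\strGfull{t_0+1}}$ with $\TT^{\strGfull{t_0}}$ via a monotone coupling on the same partially ordered space of variable-dimension sorted posterior vectors (your injection-based order is equivalent to their $\lleq$ on $\spaceo$). The only stylistic difference is that the paper packages the coupling step into two abstract propositions (one for the initial ordering at $t_0+1$, one for stochastic monotonicity of the Markov kernel $\KKo$) and invokes Strassen's theorem, whereas you build the coupling constructively via the quantile transform; the paper's proofs of those propositions in fact carry out exactly your construction.
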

\section{Numerical Experiment}\label{sec:numerical}

We evaluate the proposed procedure via a simulation study under the change-point model
{\mhomo}.  Two stream sizes $K = 50$ and 500 are considered, representing problems of different scales.
For all the data streams, we let the pre- and post-change distributions be $N(0, 1)$ and $N(1,1)$, respectively.
We consider two settings for the change-point distribution, with $\theta = 0.01$ and $0.05$
in the geometric distribution, respectively. We set the threshold to be $\alpha = 0.05$ for the control of LFNR. The
combinations of $K$ and $\theta$ lead to four different settings. For each setting, we run 5000 independent replications.

{We consider two procedures, including (1) the adaptive procedure given in Algorithm~\ref{alg:SSS} and (2) a procedure in which a stream $k$ is deactivated if the posterior probability $W_{k,t}$ is greater than the non-adaptive threshold $\lambda_t$ (see  \eqref{eq:threholds}) given by the asymptotic results. The non-adaptive threshold $\lambda_t$ is approximated via a simulation with 1,000,000 streams.

We evaluate these procedures by (1) mean  FNP, (2) mean LFNR,  (3)  mean number of active streams, and (4) mean stream utilization, at each time point.
These values are obtained by averaging over the 5000 independent replications. For example, for each simulation, we can calculate the FNP at each time point following equation \eqref{eq:pcr}. The mean FNP at each time point is then calculated by averaging the corresponding FNP values from the 5000 independent simulations under each setting. The other metrics are calculated similarly.
The results are given in Figures~\ref{fig:1} through \ref{fig:4} that correspond to the settings (1) $K = 50$, $\theta = 0.01$, (2) $K = 50$, $\theta = 0.05$, (3) $K = 500$, $\theta = 0.01$, and (4) $K = 500$, $\theta = 0.05$, respectively. We discuss these results below.

\begin{figure}
  \centering
  \includegraphics[scale = 0.35]{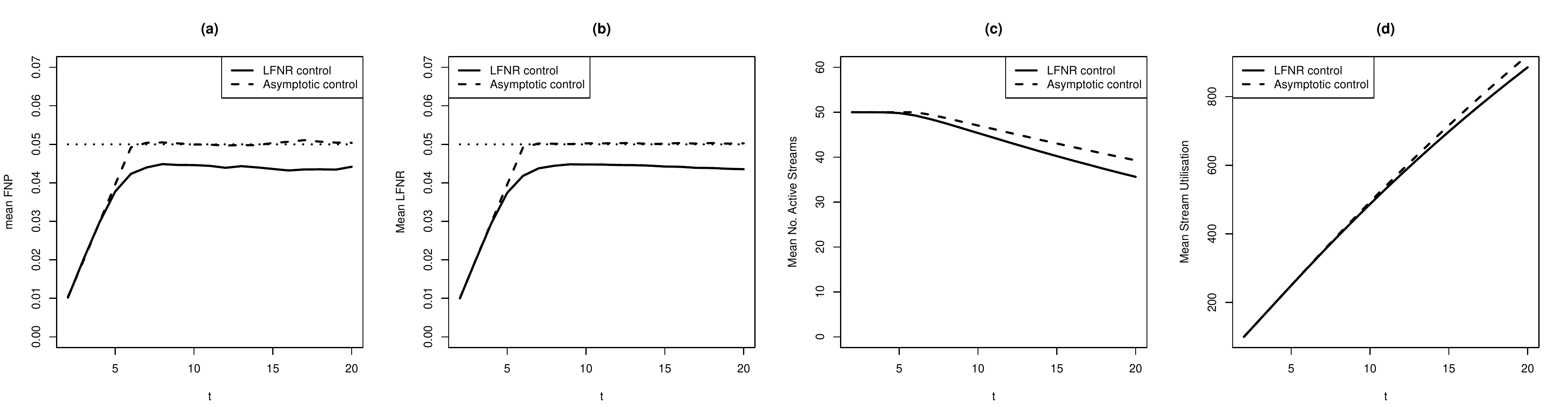}
  \caption{\footnotesize Results under the setting when $K = 50$ and  $\theta = 0.01$. Panels (a) through (d) correspond to the four metrics, (1) mean  FNP, (2) mean LFNR,  (3)  mean number of active streams, and (4) mean stream utilization, respectively. }\label{fig:1}
\end{figure}
\begin{figure}
  \centering
  \includegraphics[scale = 0.35]{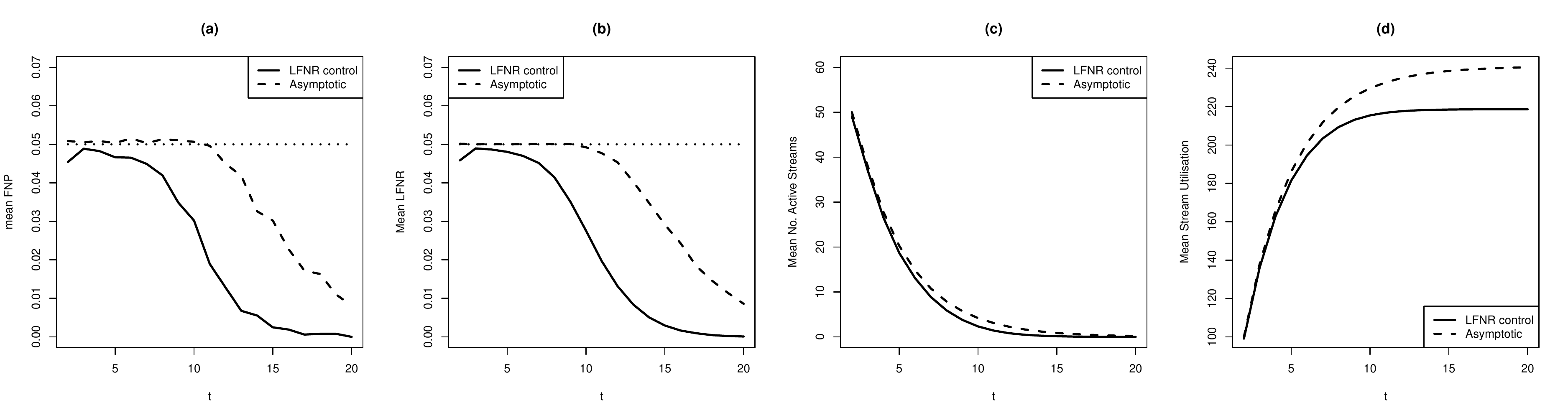}
  \caption{\footnotesize Results under the setting when $K = 50$ and  $\theta = 0.05$. The four panels show the same metrics as in Figure~\ref{fig:1}. }\label{fig:2}
  \centering
  \includegraphics[scale = 0.35]{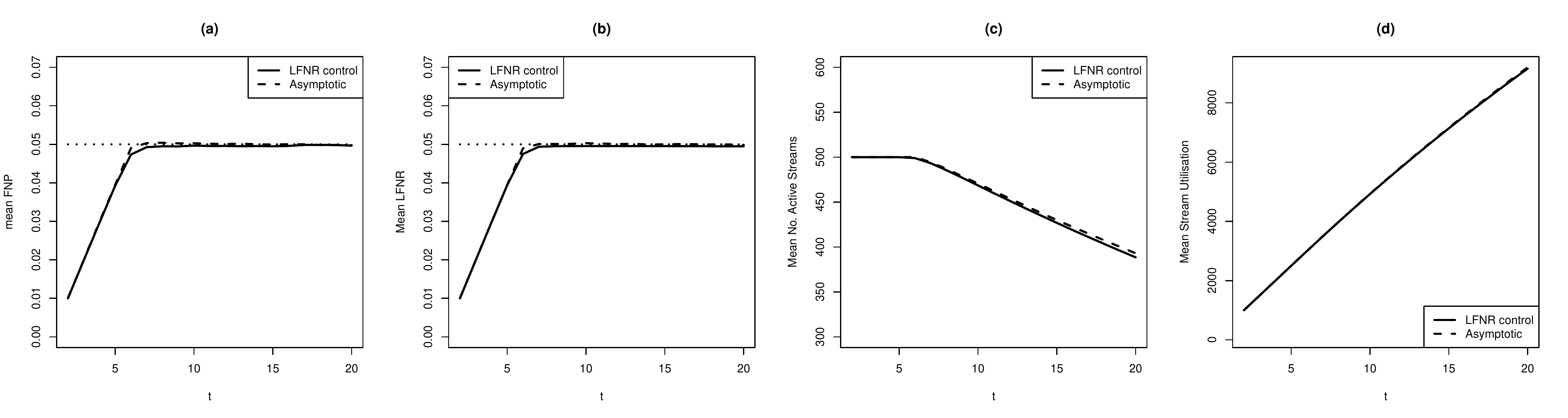}
  \caption{\footnotesize Results under the setting when $K = 500$ and  $\theta = 0.01$. The four panels show the same metrics as in Figure~\ref{fig:1}. }\label{fig:3}
 \centering
  \includegraphics[scale = 0.35]{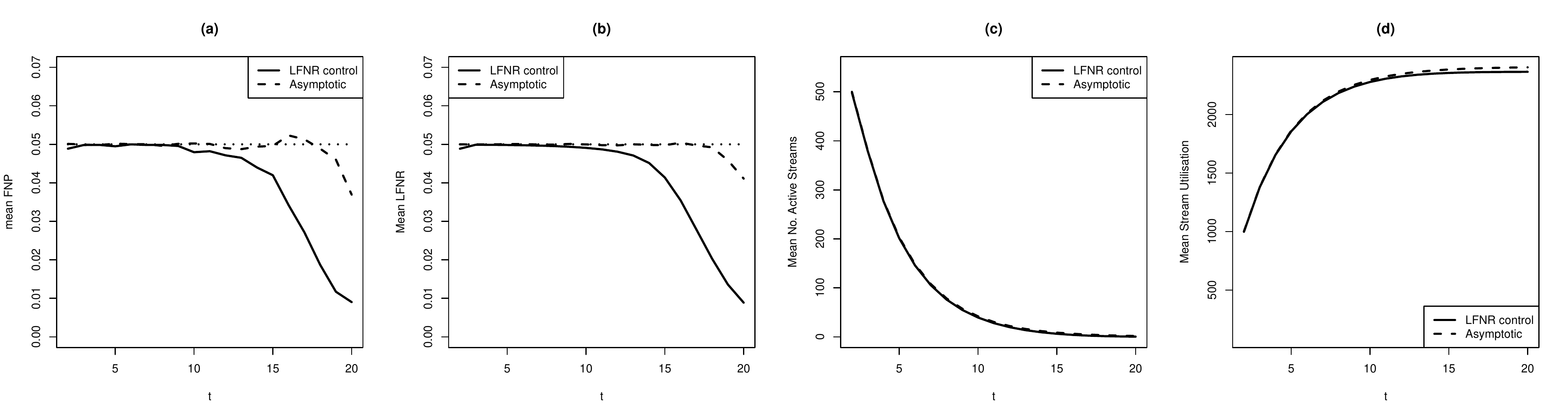}
  \caption{\footnotesize Results under the setting when $K = 500$ and  $\theta = 0.05$. The four panels show the same metrics as in Figure~\ref{fig:1}.  }\label{fig:4}
\end{figure}
First,  for the proposed adaptive procedure, the mean FNP and mean LFNR are always below the 0.05 threshold under all the four settings, suggesting that the risk of the active streams is well-controlled at the aggregate level; see Panels (a) and (b)  of the figures. The control of these quantities is a direct result of the proposed procedure controlling LFNR at every time point. More specifically, when $\theta = 0.05$, the data streams change relatively more quickly than the case when $\theta = 0.01$. In that case, the proportion of post-change streams quickly exceed 0.05 and the proposed procedure controls this proportion to be slightly below 0.05 by deactivating the ones with the highest post-change posterior probabilities. As time goes on, the mean FNP will decay towards zero, as the number of active streams decays to zero; see Panels (c) and (d) of Figures~\ref{fig:2} and \ref{fig:4}. When $\theta = 0.01$, the data streams change at a much slower rate. Thus, at the beginning, the proportion of post-change streams among the active ones tends to be smaller than 0.05 and the proposed procedure does not tend to detect and deactivate any streams. When the proportion of post-change streams accumulates to be above 0.05, the proposed procedure starts to deactivate changed streams to control the proportion to be around the targeted level. As change points occur more slowly, the number of active streams at any given time point tends to be larger than that when $\theta = 0.05$. See Figures~\ref{fig:1} and \ref{fig:3} for more details.

Second, as we can see from Panels (a) and (b) of the figures,
the non-adaptive procedure based on the asymptotic theory also controls the mean FNP and the mean LFNR to be near or below the targeted level, though the mean FNP may be slightly larger than the targeted level occasionally under the settings when $K = 50$. It tends to be slightly more aggressive than the adaptive procedure, because the LFNR can sometimes exceed the targeted threshold $\alpha$. Overall, the non-adaptive procedure also performs well, in the sense that it tends to control FNR at the targeted level $\alpha$ (i.e. the expected value of LFNR) at all time points, even though the LFNR itself is not exactly controlled.

Finally, we see that the two procedures tend to perform more similarly when the number of active streams is larger, as the non-adaptive procedure is the limiting case of the adaptive procedure when the number of streams grows to infinity.
More specifically, comparing the setting when $K = 500$ (Figures~\ref{fig:3} and \ref{fig:4}) with that when $K = 50$ (Figures~\ref{fig:1} and \ref{fig:2}) , we see that the two procedures are closer to each other when $K = 500$. For the same value of $K$, the two procedures tend to be more similar under the setting when $\theta= 0.01$ than that when $\theta = 0.05$, as data streams change more slowly  and thus there tend to be more active streams at every time point when $\theta = 0.01$. Moreover, for each setting, the two procedures tend to behave more similarly when $t$ is smaller, as the number of active streams decays with time $t$.

}

\section{Discussions}\label{sec:diss}

Motivated by real-world applications from various fields including education, engineering, and finance,
we propose a compound decision framework for Bayesian sequential change-point detection in parallel data streams. An easy-to-implement procedure is proposed, for which theoretical properties are established. Specifically, under a class of change-point models, the proposed procedure is shown to be uniformly optimal in a non-asymptotic sense. Numerical experiments show that the proposed procedure can accurately control the aggregated risk of active streams.


The current work can be extended along several directions.
First,
different optimality criteria may be considered and
the proposed procedure can be extended accordingly. For example, different streams may have different weights
due to their unequal importance in practice. In that case, more general definitions of local false non-discovery rate and
stream utilization measure can be given, for which a tailored sequential procedure can be derived.

Second, in some real applications, the change-point distribution  and the distributions for pre- and post-change data  may not be known in advance.
This problem may be handled by parameterizing the pre- and post-change distributions and then use a full or empirical Bayes approach that combines the proposed procedure   with sequential estimation of the unknown parameters.  Optimality theory may be established when the number of streams $K$ goes to infinity.



Third, optimal sequential procedures remain to be developed 
under reasonable models for dependent change points. In particular, in many multi-stream change detection problems, the change points may be driven by a low-dimensional latent process, which can be described by a dynamic latent factor model.
Several questions remain to be answered under such a change-point model,
including the existence of a uniformly optimal procedure and
the construction of the uniformly optimal procedure if it exists.



Finally, a more general setting may be considered that allows new data streams to be added dynamically. For example,  in educational testing, once an item is removed from the item pool, a new one needs to be developed to maintain the size of the pool. The inclusion of new data streams changes the information filtration. Under the new information filtration which contains information from both the original and new streams, a locally optimal procedure can be developed under similar optimality criteria.
However, it is unclear whether this procedure is still uniformly optimal. This problem is worth future investigation.
\section*{Acknowledgments}
We would like to thank the editors and two referees for their helpful and constructive comments. Xiaoou Li’s research was partially supported by the NSF grant DMS-1712657. 

\section*{Supplementary material}
\label{SM}
The supplementary material includes proofs of the theoretical results.
\appendix

\onehalfspacing



\vspace{1cm}

\newpage
 \begin{center}
 	 {\Large \bf Supplement to `Compound Sequential Change-point Detection in Parallel Data Streams'}
 \end{center}

\section{Notations}\label{sec:notation}
For the readers' convenience, we provide a list of notations below. {We will also restate these notation when they first appear in the proof.}
\begin{itemize}
	\item $X_{S,t}$ for some set $S\subset\{1,\cdots,K\}$:  $X_{S,t}=(X_{k,t})_{k\in S}$.
	\item $X_{k,s:t}$: $X_{k,s:t}=(X_{k,r})_{s\leq r\leq t}$.
	\item $\TT^{\strA}$: an arbitrary sequential procedure.
	\item $S_t^{\strA}$: The set of active streams at time $t$ given by procedure $\TT^{\strA}$.
	\item $\fil^{\strA}_t$: $\sigma$-field of information obtained up to time $t$ following $\TT^{\strA}$.
	\item  $W^{\strA}_{k,t}$: posterior probability $\prob(\tau_k<t|\fil^{\strA}_t)$ at the $k$-th stream following $\TT^{\strA}$ at time $t$.
	\item $W^{\strA}_{S,t}$ for some set $S\subset\{1,\cdots, K\}$: $W^{\strA}_{S,t}=(W^{\strA}_{k,t})_{k\in S}$.
	\item $\TTp$: the proposed sequential procedure.
	\item $S^{\strG}_t$, $\fil^{\strG}_t$, $W^{\strG}_{k,t}$, $W^{\strG}_{S,t}$ are defined similarly for procedure $\TTp$.
	\item $\TT^{\strGfull{t_0}}$: the sequential procedure that
takes the same steps as $\TT^{\strA}$
up to time $t_0$ (meaning $S^{\strGfull{t_0}}_{t}=S^{\strA}_t$ for $1\leq t\leq t_0$) and updates by  Algorithm~\ref{alg:one-step-rule1} from time $t_0+1$ and onward.
	\item $S^{\strGfull{t_0}}_t$, $\fil^{\strGfull{t_0}}_t$, $W^{\strGfull{t_0}}_{k,t}$, $W^{\strGfull{t_0}}_{S,t}$ are defined similarly for procedure $\TT^{\strGfull{t_0}}$.
	\item $\eqd$: equal in distribution.
	\item $\ab$: a vector with zero length.
	\item $\card$: length of a vector, where $\card(\ab)=0$.
	\item $Z\sim N(0,1)$: the notation `$\sim$' means that the left side follows the distribution on the right side.
\end{itemize}
\section{Proof Sketch}\label{sec:proof-sketch}
{In this section, we discuss the main steps and techniques for proving
Theorem~\ref{prop:stronger}   through an induction argument.} Its proof is involved, relying on some monotone coupling results on stochastic processes living in a special partially ordered space. 
In what follows, we give a sketch of the proof to provide more insights into the
proposed procedure. 
When $s=0$, it is trivial that \eqref{eq:induction-statement} holds. The induction is 
 to show that
for any   $\TT^\strA\in\adset_{\alpha}$ and any $t_0$,  \eqref{eq:induction-statement} holds for
  $s = s_0 +1$,
  assuming that it holds for $s \leq s_0$.
 The induction step is proved by the following three steps.
\begin{enumerate}
	\item Show that $\TT^\strGfull{t_0+1}$ is `better' than $\TT^\strA$ conditional on $ \mathcal{F}^\strA_{t_0}$.
	\item Show that $\TT^\strGfull{t_0}$ is `better' than $\TT^\strGfull{t_0+1}$ conditional on $ \mathcal{F}^\strA_{t_0}$.
	\item  Show that $\TT^\strGfull{t_0}$ is `better' than $\TT^\strA$ conditional on $ \mathcal{F}^\strA_{t_0}$ by combining the first two steps.
\end{enumerate}

Here, we say a procedure is `better than' the other, if its conditional expectation of  the size of index set at time $t_0+s+1$
is no less than that of the other, given the information filtration $\mathcal{F}^\strA_{t_0}$.
Roughly, we prove the first step by
replacing $t_0$ with $t_0+1$ in the
induction assumption and taking conditional expectation  given $\fil^{\strA}_{t_0}$,
 and prove the third step by combining the first and second steps.
The main technical challenge lies in the second step, for which we develop several technical tools. Among these tools, an important one is the following monotone coupling result regarding a special partial order relationship.

We define a partially ordered space $(\spaceo,\lleq)$ as follows. Let
\begin{equation}
	 \spaceo=\bigcup_{k=1}^K \big\{\vv=(v_1,\cdots,v_k)\in[0,1]^k: 0\leq v_1\leq \cdots v_k\leq 1
		\big\}\cup \{\ab\},
\end{equation}
where $\ab$ represents a vector with zero length.
For $\uu\in\spaceo$, let $\card(\uu)$ be the length of the vector $\uu$.

\begin{definition}
	For $\uu,\vv\in\spaceo$, we say $\uu\lleq
 \vv$ if $\card(\uu)\geq \card(\vv)$ and $u_i\leq v_i$ for $i=1,...,\card(\vv)$. In addition, we say $\uu\lleq \ab$ for any $\uu\in\spaceo$.
\end{definition}

To emphasize the dependence on the sequential procedure, we use $S^{\strA}_{t}$ and $\fil^{\strA}_t$ to denote the index set
 and the information filtration at time $t$ given by the sequential procedure $\TT^{\strA}$. 
We further define
$W^{\strA}_{k,t}=\prob\left(\tau_k < t\mid \fil_t^{\strA}\right).$
Similarly, we define the index set $S^{\strGfull{t_0}}_t$, information filtration $\fil^{\strGfull{t_0}}_t$, and posterior probability  $W^{\strGfull{t_0}}_{k,t}$ given by the sequential procedure $\TT^{\strGfull{t_0}}$. For any vector $\vv=(v_1,\cdots,v_m)$, we use the notation $[\vv]=(v_{(1)},\cdots,v_{(m)})$ for its order statistic. In addition, let $[\ab]=\ab$.

\begin{proposition}\label{prop:CouplingForProp}
Let $\{x_t,s_t,1\leq t\leq t_0\}$ be any sequence in the support of the stochastic process $\big\{(X_{k,t})_{k\in S^{\strA}_{t}},S^{\strA}_{t}, 1\leq t\leq t_0\big\}$
following a sequential procedure $\TT^{\strA}\in\adset_{\alpha}$. Then, there exists a coupling of $\spaceo$-valued random variables $(\hat{W},\hat{W}')$ such that
\begin{equation}
\begin{split}
		\hat{W}&\eqd \left.\left[ \left(W^{\strGfull{t_0}}_{k,{t_0+1}}\right)_{k\in S^{\strGfull{t_0}}_{t_0+1}}\right]\middle| \left\{\left(X_{k,t}\right)_{k\in S^{\strA}_{t}}=x_t,S^{\strA}_{t}=s_t, 1\leq t\leq t_0\right\}\right.,\\
	\hat{W}'&\eqd \left.\left[\left(W_{k,{t_0+1}}^{\strGfull{t_0+1} }\right)_{k\in S_{t_0+1}^{\strGfull{t_0+1}}}\right]\middle| \left\{\left(X_{k,t}\right)_{k\in S^{\strA}_{t}}=x_t,S^{\strA}_{t}=s_t, 1\leq t\leq t_0\right\}\right.,
\end{split}
\end{equation}
and	$\hat{W}\lleq \hat{W}'$ a.s., {where $\eqd$ denotes that random variables on both sides are identically distributed.}
\end{proposition}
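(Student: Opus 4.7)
My plan is to build a stream-by-stream monotone coupling that exploits the monotonicity of the Bayes update under model~\mhomo, and then to verify that common sorting preserves componentwise domination. First I would unpack the conditioning: since $\TT^{\strGfull{t_0}}$ and $\TT^{\strGfull{t_0+1}}$ both follow $\TT^{\strA}$ through time $t_0$, under the given realization the sets $S^{\strGfull{t_0}}_{t_0}=S^{\strGfull{t_0+1}}_{t_0}=s_{t_0}$ and the posteriors $W^{\strA}_{k,t_0}$ for $k\in s_{t_0}$ are all determined, and the only residual randomness is in $\{X_{k,t_0+1}\}$. Set $A:=S^{\strGfull{t_0}}_{t_0+1}$ and $B:=S^{\strA}_{t_0+1}=S^{\strGfull{t_0+1}}_{t_0+1}$; both are deterministic subsets of $s_{t_0}$. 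Algorithm~\ref{alg:one-step-rule1} chooses $A$ to comprise the streams with the $|A|$ smallest values of $W^{\strA}_{k,t_0}$, and Proposition~\ref{prop:size} gives $|A|\geq|B|$. Enumerating $A$ and $B$ so that $w^A_1\leq\cdots\leq w^A_{|A|}$ and $w^B_1\leq\cdots\leq w^B_{|B|}$ are the sorted prior posteriors within each set, a one-line rearrangement argument (since $A$ keeps the $|A|$ smallest overall) yields $w^A_j\leq w^B_j$ for every $j\leq|B|$.

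Next I would use that under model~\mhomo, Lemma~\ref{lem:update} reduces the update to a per-stream map $W_{k,t_0+1}=\tilde f(W_{k,t_0},L(X_{k,t_0+1}))$, where $L=q/p$ and $\tilde f(w,\ell)=\ell/\bigl[(1-\theta)(1-w)/(\theta+(1-\theta)w)+\ell\bigr]$ is strictly increasing in both $w$ and $\ell$. Conditionally on $\fil^{\strA}_{t_0}$, the observations $\{X_{k,t_0+1}\}_{k\in s_{t_0}}$ are mutually independent, and $X_{k,t_0+1}$ is drawn from the mixture $\tilde w q+(1-\tilde w)p$ with $\tilde w=\theta+(1-\theta)W_{k,t_0}$. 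A standard monotone-likelihood-ratio argument then shows that, under this mixture, $L(X_{k,t_0+1})$ is stochastically increasing in $W_{k,t_0}$; Assumption~A1 ensures the associated CDFs $G_w$ of $L(X)$ are continuous, so quantile inversion realizes a genuine monotone coupling.

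The coupling itself is then as follows. Draw i.i.d.\ $U_1,\ldots,U_{|A|}\sim\mathrm{Unif}(0,1)$, and for $j\leq|B|$ set $Y^A_j=G^{-1}_{w^A_j}(U_j)$ and $Y^B_j=G^{-1}_{w^B_j}(U_j)$; for $|B|<j\leq|A|$, set $Y^A_j=G^{-1}_{w^A_j}(U_j)$ alone. Define $a_j:=\tilde f(w^A_j,Y^A_j)$ for $j\leq|A|$ and $b_j:=\tilde f(w^B_j,Y^B_j)$ for $j\leq|B|$, and take $\hat W:=[(a_1,\ldots,a_{|A|})]$ and $\hat W':=[(b_1,\ldots,b_{|B|})]$. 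The marginals match the claimed conditional laws because $(Y^A_j)_{j=1}^{|A|}$ are independent with $Y^A_j\sim G_{w^A_j}$, which coincides with the conditional distribution of $L(X_{k,t_0+1})$ at the $j$th smallest-posterior index of $A$ under $\TT^{\strGfull{t_0}}$, and similarly for $\hat W'$ under $\TT^{\strGfull{t_0+1}}$. Quantile coupling produces $Y^A_j\leq Y^B_j$ for $j\leq|B|$, and together with $w^A_j\leq w^B_j$ and the joint monotonicity of $\tilde f$, this gives $a_j\leq b_j$ for $j=1,\ldots,|B|$.

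To conclude $\hat W\lleq\hat W'$, I would invoke two elementary order-statistic facts: first, componentwise domination of equal-length vectors is preserved by sorting, so $[(a_1,\ldots,a_{|B|})]_{(j)}\leq[(b_1,\ldots,b_{|B|})]_{(j)}=\hat W'_j$ for $j\leq|B|$; and second, enlarging the candidate pool can only decrease each order statistic, hence $\hat W_j=[(a_1,\ldots,a_{|A|})]_{(j)}\leq[(a_1,\ldots,a_{|B|})]_{(j)}$. Combined with $\card(\hat W)=|A|\geq|B|=\card(\hat W')$, this gives $\hat W\lleq\hat W'$ almost surely. The main obstacle is the bookkeeping that reconciles the per-stream coupling with the correct conditional marginals: since $A$ and $B$ need not coincide, there is no physically shared observation $X_{k,t_0+1}$ across the two procedures, so the joint law must be synthesized abstractly via quantile inversion while honoring each procedure's conditional mixture. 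The conceptual simplification that makes this workable is that under model~\mhomo the per-stream update is independent across streams and jointly monotone in the prior and the likelihood ratio, which propagates cleanly through the sorting step.
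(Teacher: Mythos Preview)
Your argument is correct and tracks the paper's proof closely: both first establish that the sorted time-$t_0$ posteriors over $A=S^{\strGfull{t_0}}_{t_0+1}$ are dominated by those over $B=S^{\strA}_{t_0+1}$ in the sense $w^A_j\leq w^B_j$ for $j\leq|B|$ (you via Proposition~\ref{prop:size} and a rearrangement argument; the paper via Lemma~\ref{lemma:control} and Lemma~\ref{lemma:ho-domin} applied to $\Ho([\ww_{t_0}])$), and then both propagate this through a per-stream monotone coupling of the Bayes update (you construct it explicitly by quantile inversion; the paper packages it as stochastic monotonicity of a kernel $\KKa$ and invokes Strassen's theorem via Lemmas~\ref{lemma:monotone-v} and~\ref{lemma:monotone-kka}). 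Your explicit construction is more self-contained, while the kernel abstraction pays dividends because the paper reuses $\KKa$ in proving Proposition~\ref{prop:monotone-o}. One minor point: your appeal to Assumption~A1 is unnecessary here---quantile coupling via generalized inverse CDFs (or Strassen's theorem, as the paper does in Lemma~\ref{lemma:monotone-v}) delivers the monotone coupling without any continuity hypothesis on the likelihood-ratio distribution.
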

{We clarify that by the above proposition, the resulting $\hat{W}$ and $\hat{W}'$ are defined on the same probability space.}
Let \begin{equation}\label{eq:stocmono}
Y_s=\left[\left(W^{\strGfull{t_0}}_{k,t_0+s}\right)_{k\in S^{ \strGfull{t_0}}_{t_0+s}}\right] \in \spaceo.
\end{equation}
Under model {\mhomo}, the stochastic process $Y_s$ is stochastically monotone 
in that the following monotone coupling result holds.
\begin{proposition}\label{prop:monotone-o}
Suppose that model {\mhomo} holds. Then
for any $\yy, \yy'\in \spaceo$ such that $\yy \lleq \yy'$, there exists a coupling $(\hat{Y}_s,\hat{Y}'_s), s=0, 1, ... $,  satisfying
\begin{enumerate}
	\item  $\{\hat{Y}_s:s\geq 0\}$ has the same distribution as the conditional process $\{Y_s:s\geq 0\}$ given $Y_{0}=\yy$, and $\{\hat{Y}'_s:s\geq 0\}$ has the same distribution as the conditional process $\{Y_s:s\geq 0\}$ given $Y_{0}=\yy'$.
	\item  $\hat{Y}_s\lleq \hat{Y}'_s$, a.s. for all $s\geq 0$.
\end{enumerate}
Moreover, the process $(\hat{Y}_s,\hat{Y}'_s)$ does not depend on $\TT^{\strA}$, $t_0$, or the information filtration $\fil^{\strA}_{t_0}$.
\end{proposition}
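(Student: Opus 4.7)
The plan is to build the coupling by establishing a one-step monotone coupling of the Markov transition and then iterating it with fresh independent randomness. Under model \mhomo, the vector of posterior probabilities on the currently active streams is a time-homogeneous Markov chain on $\spaceo$: by Lemma~\ref{lem:update}, each coordinate updates as $W_{k,t+1}=f(W_{k,t},L_{k,t+1})$ with $L_{k,t+1}=q(X_{k,t+1})/p(X_{k,t+1})$ whose conditional law depends only on $W_{k,t}$, and Algorithm~\ref{alg:one-step-rule1}'s deactivation step is a deterministic function of the updated vector. Hence it suffices to couple a single step given any $\yy\lleq\yy'$ and iterate; a byproduct is that the coupling is automatically free of $\TT^{\strA}$, $t_0$, and $\fil^{\strA}_{t_0}$.

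For the one-step coupling I would first record two monotonicity facts. Writing $f(w,\ell)=\ell/(C(w)+\ell)$ with $C(w)=(1-\theta)(1-w)/(\theta+(1-\theta)w)$, a direct calculation shows $C$ is decreasing in $w$, hence $f$ is non-decreasing in both arguments on $[0,1)\times[0,\infty)$. Second, given $W=w$ the conditional CDF of $L$ is the mixture $G_w=(1-\theta)(1-w)G_p+(\theta+(1-\theta)w)G_q$, where $G_p,G_q$ are the CDFs of $q(Z)/p(Z)$ under $Z\sim p$ and $Z\sim q$ respectively. A short Neyman--Pearson argument (splitting at $\ell\le 1$ and $\ell>1$, using $q\le \ell p$ or $q>p$ on the corresponding level sets) gives $G_p\ge G_q$ pointwise, and so $G_w$ is pointwise non-increasing in $w$. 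Inverse-CDF coupling with a common uniform $U$ then yields $L\le L'$ a.s.\ whenever $w\le w'$, and monotonicity of $f$ gives $f(w,L)\le f(w',L')$ a.s.

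Given $\yy\lleq\yy'$ with $m=\card(\yy')\le M=\card(\yy)$, I would apply this coupling coordinatewise for $i=1,\dots,m$ using independent uniforms $U_i$, and sample the extra updates for $i=m+1,\dots,M$ independently from $G_{y_i}$. The resulting unsorted updates $\tilde W,\tilde W'$ satisfy $\tilde W_i\le\tilde W'_i$ for $i\le m$. A short rearrangement lemma (if $a_i\le b_i$ for $i\le m$ and $a$ has at least as many coordinates as $b$, then at least $i$ of the $a_j$'s lie at or below $b_{(i)}$, whence $a_{(i)}\le b_{(i)}$) transfers the inequality to the sorted vectors. Finally, the deactivation step preserves the order: the cumulative averages $R_n=n^{-1}\sum_{i\le n}[\tilde W]_{(i)}$ are non-decreasing in $n$ (each new term is at least the running average), and $R_n^{\tilde W}\le R_n^{\tilde W'}$ for $n\le m$, so if $n'^*$ streams survive Algorithm~\ref{alg:one-step-rule1} on $\tilde W'$, then $R_{n'^*}^{\tilde W}\le R_{n'^*}^{\tilde W'}\le\alpha$ forces the number $n^*$ of streams kept from $\tilde W$ to satisfy $n^*\ge n'^*$. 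Together these give $\hat Y_1\lleq\hat Y'_1$ a.s.

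Iterating this one-step coupling with fresh independent uniforms at each step produces the desired process $(\hat Y_s,\hat Y'_s)$: the marginal laws are correct by the Markov property together with the correctness of the one-step coupling, while $\hat Y_s\lleq\hat Y'_s$ for all $s\ge 0$ follows by induction. Independence of the coupling from $\TT^{\strA}$, $t_0$, and $\fil^{\strA}_{t_0}$ is automatic since the construction refers only to $(\yy,\yy')$ and the fixed transition kernel of $Y_s$ under \mhomo. In my view, the main obstacle is not the single-coordinate stochastic ordering---which is essentially classical likelihood-ratio monotonicity---but making the dimension-changing update compatible with the non-standard partial order $\lleq$ on $\spaceo$; the rearrangement lemma for vectors of unequal length and the monotonicity of the cumulative averages $R_n$ under the data-dependent stopping rule in Algorithm~\ref{alg:one-step-rule1} are the nontrivial ingredients that must be put in place before the induction closes.
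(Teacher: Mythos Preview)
Your overall strategy---reduce to stochastic monotonicity of the transition kernel, build a one-step monotone coupling, and iterate via the Markov property---is exactly what the paper does, and the ingredients you isolate (monotonicity of $f(w,\ell)$ in both arguments, stochastic ordering of the likelihood ratio in the current posterior, the rearrangement lemma for sorted vectors of unequal length, and monotonicity of the deactivation map under $\lleq$) are all correct and match the paper's supporting lemmas.

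There is, however, a genuine gap in your one-step construction: you have the order of operations backwards. The process $Y_s=\big[(W_{k,t_0+s})_{k\in S_{t_0+s}}\big]$ transitions by \emph{first} applying Algorithm~\ref{alg:one-step-rule1} to $(S_{t_0+s},W_{\cdot,t_0+s})$ to determine $S_{t_0+s+1}$, and \emph{then} observing $X_{k,t_0+s+1}$ and updating posteriors only on the surviving streams $k\in S_{t_0+s+1}$; in the paper's notation the one-step kernel is $\KKo(\uu,\cdot)=\KKa(\Ho(\uu),\cdot)$. Your construction instead updates all $M=\card(\yy)$ coordinates of $\yy$, sorts, and only then deactivates. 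These two kernels are different: under the correct transition $\dim(Y_1)=\Io(\yy)$ is deterministic given $Y_0=\yy$, whereas under yours $\dim(\hat Y_1)=\Io([\tilde W])$ is random. Hence your $\hat Y_1$ does not have the conditional law of $Y_1$ given $Y_0=\yy$, and the claimed marginal correctness fails at the first step.

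The repair is immediate with the pieces you already have: apply the deactivation map $\Ho$ first to both $\yy$ and $\yy'$---your own cumulative-average argument shows $\Ho(\yy)\lleq\Ho(\yy')$---then run your coordinatewise inverse-CDF coupling on the survivors, then sort. That is precisely the paper's proof.
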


Roughly, Proposition~\ref{prop:CouplingForProp} shows that the sequential procedure $\TT^{\strGfull{t_0}}$ tends to have a stochastically smaller detection statistic, in terms of the partial order $\lleq$, than that of $\TT^{\strGfull{t_0+1}}$ at time $t_0+1$, and thus tends to keep more active streams.  Proposition~\ref{prop:monotone-o} further shows that this trend will be carried over to any future time, including time $t_0+s+1$. The second step of induction is proved by formalizing this heuristic.

\section{Proof of Theorem~\ref{prop:stronger}}\label{sec:prop:stronger}

\begin{customthm}{6}

Suppose that 
model {\mhomo} holds.
	For any $t_0, s\geq 0$ and any sequential detection procedure $\TT^{\strA}\in\adset_{\alpha}$, let
	$\fil^\strA_t$ be the information filtration  and $S^{\strA}_t$ be the set of active streams at time $t$ given by $\TT^{\strA}$.
	Then,
\begin{equation}
			\expe\left[|S_{t_0+s}^{\strA}|\middle| \fil^\strA_{{t_0}}\right]
				\leq \expe\left[|S_{t_0+s}^{\strGfull{t_0}}|\middle| \fil^\strA_{t_0}\right] \text{ a.s.}\tag{7}
\end{equation}
\end{customthm}



\begin{proof}[Proof of Theorem~\ref{prop:stronger}]
We will prove the theorem by inducting on $s$.

For the base case ($s=0$) the theorem is obviously true for all $t_0$ and all $\TT^{\strA}\in\adset_{\alpha}$ as the both sides of \eqref{eq:induction-statement} are exactly the same.

We will prove the induction step in the rest of the proof.
Assume \eqref{eq:induction-statement} is true for any strategy $\TT^{\strA}\in\adset_{\alpha}$ and any $t_0$, for some $s=s_0$. Our goal is to prove that it is also true for any $t_0$, for $s=s_0+1$, using the following steps, {where we recall that $\TT^{\strGfull{t_0}}$ is defined as the sequential procedure that
takes the same steps as $\TT^{\strA}$
up to time $t_0$ and updates by  Algorithm~\ref{alg:one-step-rule1} from time $t_0+1$ and onward, and the sequential procedure $\TT^{\strGfull{t_0+1}}$ is defined similarly.}

\paragraph{Step 1: comparing $\TT^{\strGfull{t_0+1}}$ and $\TT^{\strA}$.}
	For $s=s_0+1$, since we assume \eqref{eq:induction-statement} is true for all $t_0$, we could replace $t_0$ by $t_0+1$ and $s$  by $s_0$ in \eqref{eq:induction-statement} and arrive at
	\begin{equation}
				\expe\left[\big|S^{\strA}_{t_0+s_0+1}\big|\Big\vert\fil_{t_0+1}^{\strA}\right]
				\leq \expe\left[\big|S^{\strGfull{t_0+1}}_{t_0+s_0+1}\big|\Big\vert \fil_{t_0+1}^{\strA}\right] \text{ a.s.}
	\end{equation}
	Taking conditional expectation $\expe\left[\cdot|\fil^{\strA}_{t_0}\right]$ on both sides, we arrive at
	\begin{equation}\label{eq:end-of-step1}
		\expe\left[\big|S^{\strA}_{t_0+s_0+1}\big|\Big\vert\fil_{t_0}^{\strA}\right]
				\leq \expe\left[\big|S^{\strGfull{t_0+1}}_{t_0+s_0+1}\big|\Big\vert \fil_{t_0}^{\strA}\right] \text{ a.s.}
	\end{equation}
\paragraph{Step 2: comparing $\TT^{\strGfull{t_0+1}}$ and $\TT^{\strGfull{t_0}}$.}
First, define a function $\phi_{t,s}:\spaceo\to \mathbb{R}$,
\begin{equation}\label{eq:phi-definition}
	\phi_{t,s}(\uu)=\expe\Big[|S^{\strGfull{t}}_{t+s}|\Big|\big[W^{\strGfull{t}}_{S^{\strGfull{t}}_{t},t}\big]=\uu\Big]=\expe\Big[\card\big(\big[W^{\strGfull{t}}_{S^{\strGfull{t}}_{t+s},t+s}\big]\big)\Big|\big[W^{\strGfull{t}}_{S^{\strGfull{t}}_{t},t}\big]=\uu\Big]
\end{equation}
for $t,s\geq 0$.  {Here, for a set $S$, and time points $s$ and $t$, $W^{\strGfull{t}}_{S,s} = \big(W_{k,s}^{\strGfull{t}}\big)_{k\in S}$,
where  $W_{k,s}^{\strGfull{t}} = \prob\left(\tau_k < s\Big|\fil_s^{\strGfull{t}}\right).$
}

From Proposition~\ref{prop:monotone-o}, we can see that $\phi_{t,s}(\uu)$ does not depend on the sequential procedure $\TT^{\strA}$ and the value of $t$. Thus, by replacing $\TT^{\strA}$ with $\TT^{\strGfull{t_0}}$, $t$ with $t_0+1$, and $s$ with $s_0$ in \eqref{eq:phi-definition}, we obtain
\begin{equation}\label{eq:phi-pt0}
	\phi_{t_0+1,s_0}(\uu)= \expe\Big[\card\big(\big[W^{\strGfull{t_0}}_{S^{\strGfull{t_0}}_{t_0+s_0+1},t_0+s_0+1}\big]\big)\Big|\big[W^{\strGfull{t_0}}_{S^{\strGfull{t_0}}_{t_0+1},t_0+1}\big]=\uu\Big].
\end{equation}
Here, to see the superscript of the process in the above equation is ${\strGfull{t_0}}$, we used the fact that if we follow the procedure $\TT^{\strGfull{t_0}}$ and switch to the proposed procedure at time $t_0+1$, then the overall sequential procedure is still $\TT^{\strGfull{t_0}}$.

{Also from Proposition~\ref{prop:monotone-o}, we can see that} for any $\uu\lleq\uu'\in\spaceo$, there exists a coupling $(\hat{Y}_s,\hat{Y}'_s)$ such that $\hat{Y}_s$ has the same distribution as $\big[W^{\strGfull{t}}_{S^{\strGfull{t}}_{t+s},t+s}\big]$ given $\big[W^{\strGfull{t}}_{S^{\strGfull{t}}_{t},t}\big]=\uu$, $\hat{Y}'_s$ has the same distribution as $\big[W^{\strGfull{t}}_{S^{\strGfull{t}}_{t+s},t+s}\big]$ given $\big[W^{\strGfull{t}}_{S^{\strGfull{t}}_{t},t}\big]=\uu'$, and $\hat{Y}_s\lleq\hat{Y}'_s$ a.s. Thus,
\begin{equation}
	\phi_{t,s}(\uu)=\expe\big(\dim(\hat{Y}_s)\big) \text{ and } \phi_{t,s}(\uu')=\expe\big(\dim(\hat{Y}'_s)\big).
\end{equation}
According to the definition of the partial relationship `$\lleq$', $\hat{Y}_s\lleq\hat{Y}'_s$ implies $\dim(\hat{Y}_s)\geq \dim(\hat{Y}'_s)$. Combining this result with the above display, we conclude that $\phi_{t,s}(\uu)\geq \phi_{t,s}(\uu')$ for any $\uu\lleq\uu'\in\spaceo$.

Next, we write $\expe\left[\big|S^{\strGfull{t_0+1}}_{t_0+s_0+1}\big|\Big\vert \fil_{t_0}^{\strA}\right]$ and $\expe\left[\big|S^{\strGfull{t_0}}_{t_0+s_0+1}\big|\Big\vert \fil_{t_0}^{\strA}\right]$ in terms of the conditional expectation involving the function $\phi_{t,s}$. We start with $\expe\left[\big|S^{\strGfull{t_0}}_{t_0+s_0+1}\big|\Big\vert \fil_{t_0}^{\strA}\right]$.
By the iterative law of conditional expectation and \eqref{eq:phi-pt0}, we obtain
\begin{equation}
\begin{split}
		\expe\left[\big|S^{\strGfull{t_0}}_{t_0+s_0+1}\big|\Big\vert \fil_{t_0}^{\strA}\right]
	= &\expe\left[\expe\left\{\big|S^{\strGfull{t_0}}_{t_0+s_0+1}\big|\Big| \big[W^{\strGfull{t_0}}_{S^{\strGfull{t_0}}_{t_0+1},t_0+1}\big]\right\}\Big\vert \fil_{t_0}^{\strA}\right] \\
	= & \expe\left[\expe\left\{\card\big( [W^{\strGfull{t_0}}_{S^{\strGfull{t_0}}_{t_0+s_0+1},t_0+s_0+1}\big] \big) \Big| \big[W^{\strGfull{t_0}}_{S^{\strGfull{t_0}}_{t_0+1},t_0+1}\big]\right\}\Big\vert \fil_{t_0}^{\strA}\right]\\
	= & \expe\left[\phi_{t_0+1,s_0}\Big( \big[W^{\strGfull{t_0}}_{S^{\strGfull{t_0}}_{t_0+1},t_0+1}\big]\Big)\Big\vert \fil_{t_0}^{\strA}\right].
\end{split}
\end{equation}
According to the definition of the information filtration $\fil_{t_0}^{\strA}$, we further write the above conditional expectation as
\begin{equation}\label{eq:to-compare-1}
	\expe\left[\big|S^{\strGfull{t_0}}_{t_0+s_0+1}\big|\Big\vert \fil_{t_0}^{\strA}\right]
	= \expe\left[\phi_{t_0+1,s_0}\Big( \big[W^{\strGfull{t_0}}_{S^{\strGfull{t_0}}_{t_0+1},t_0+1}\big]\Big)\Big\vert \big\{S^{\strA}_r, X_{k,r}, k\in S^{\strA}_r, 1\leq r\leq t_0\big\}\right].
\end{equation}
Similarly, we have
\begin{equation}\label{eq:to-compare-2}
	\expe\left[\big|S^{\strGfull{t_0+1}}_{t_0+s_0+1}\big|\Big\vert \fil_{t_0}^{\strA}\right]
	= \expe\left[\phi_{t_0+1,s_0}\Big( \big[W^{\strGfull{t_0+1}}_{S^{\strGfull{t_0+1}}_{t_0+1},t_0+1}\big]\Big)\Big\vert \big\{S^{\strA}_r, X_{k,r}, k\in S^{\strA}_r, 1\leq r\leq t_0\big\}\right].
\end{equation}
We proceed to a comparison between \eqref{eq:to-compare-1} and \eqref{eq:to-compare-2}.  According to Proposition~\ref{prop:CouplingForProp}, for each sequence $\{x_r,s_r,1\leq r\leq t_0\}$ that is in the support of the process $\{X_{S^{\strA}_{r},r},S^{\strA}_{r}, 1\leq r\leq t_0\}$ there exists a coupling $(\hat{W},\hat{W}')$ such that
\begin{equation}
	\hat{W}\eqd[W^{\strGfull{t_0}}_{S^{\strGfull{t_0}}_{t_0+1},{t_0+1}}]\Big| \{X_{S^{\strA}_{r},r}=x_r,S^{\strA}_{r}=s_r, 1\leq r\leq t_0\},
\end{equation}
\begin{equation}
	\hat{W}'\eqd [W^{\strGfull{t_0+1}}_{S^{\strGfull{t_0+1}}_{t_0+1},{t_0+1}}]\Big| \{X_{S^{\strA}_{r},r}=x_r,S^{\strA}_{t}=s_r, 1\leq r\leq t_0\},
\end{equation}
and
\begin{equation}
	\hat{W}\lleq \hat{W}' \text{ a.s.},
\end{equation}
where `$\eqd$' means two random  variables on both sides have the same distribution.
Thus,
\begin{equation}\label{eq:conditional-1}
	\expe\left[\phi_{t_0+1,s_0}\Big( \big[W^{\strGfull{t_0}}_{S^{\strGfull{t_0}}_{t_0+1},t_0+1}\big]\Big)\Big\vert X_{S^{\strA}_{r},r}=x_r,S^{\strA}_{r}=s_r, 1\leq r\leq t_0\right]=\expe\phi_{t_0+1,s_0}\big(\hat{W}\big)
\end{equation}
and
\begin{equation}\label{eq:conditional-2}
	\expe\left[\phi_{t_0+1,s_0}\Big( \big[W^{\strGfull{t_0+1}}_{S^{\strGfull{t_0+1}}_{t_0+1},t_0+1}\big]\Big)\Big\vert X_{S^{\strA}_{r},r}=x_r,S^{\strA}_{r}=s_r, 1\leq r\leq t_0\right]=\expe\phi_{t_0+1,s_0}\big(\hat{W}'\big).
\end{equation}
On the other hand, note that we have shown $\phi_{t_0+1,s_0}(\uu)\geq\phi_{t_0+1,s_0}(\uu')$ for any $\uu\lleq\uu'\in\spaceo$ and $\hat{W}\lleq\hat{W}'$ a.s. by the coupling. Thus,
\begin{equation}
 	\phi_{t_0+1,s_0}(\hat{W})\geq\phi_{t_0+1,s_0}(\hat{W}') \text{ a.s.}
 \end{equation}
 Combining the above inequality with \eqref{eq:conditional-1} and \eqref{eq:conditional-2}, we arrive at
 \begin{equation}
 	\begin{split}
 			&\expe\left[\phi_{t_0+1,s_0}\Big( \big[W^{\strGfull{t_0}}_{S^{\strGfull{t_0}}_{t_0+1},t_0+1}\big]\Big)\Big\vert X_{S^{\strA}_{r},r}=x_r,S^{\strA}_{r}=s_r, 1\leq r\leq t_0\right]\\
 			\geq & \expe\left[\phi_{t_0+1,s_0}\Big( \big[W^{\strA}_{S^{\strA}_{t_0+1},t_0+1}\big]\Big)\Big\vert X_{S^{\strA}_{r},r}=x_r,S^{\strA}_{r}=s_r, 1\leq r\leq t_0\right]
 	\end{split}
 \end{equation}
 for each sequence $\{x_r,s_r,1\leq r\leq t_0\}$ that is in the support of the process $\{X_{S^{\strA}_{r},r},S^{\strA}_{r}, 1\leq r\leq t_0\}$. Comparing the above inequality with \eqref{eq:to-compare-1} and \eqref{eq:to-compare-2}, we conclude that
 \begin{equation}\label{eq:end-of-step2}
 	\expe\left[\big|S^{\strGfull{t_0}}_{t_0+s_0+1}\big|\Big\vert \fil_{t_0}^{\strA}\right]\geq \expe\left[\big|S^{\strGfull{t_0+1}}_{t_0+s_0+1}\big|\Big\vert \fil_{t_0}^{\strA}\right]~~a.s.
 \end{equation}

\paragraph{Step 3: combining results from Steps 1 and 2.}
Combining \eqref{eq:end-of-step1} and \eqref{eq:end-of-step2}, we obtain
\begin{equation}\label{eq:end-of-step3}
	\expe\left[\big|S^{\strA}_{t_0+s_0+1}\big|\Big\vert\fil_{t_0}^{\strA}\right]\leq \expe\left[\big|S^{\strGfull{t_0}}_{t_0+s_0+1}\big|\Big\vert \fil_{t_0}^{\strA}\right] \text{ a.s.,}
\end{equation}
which implies that \eqref{eq:induction-statement} holds for arbitrary $\TT^{\strA}\in \adset_{\alpha}$, $t_0$, and $s=s_0+1$. This completes the induction.
\end{proof}
{\begin{remark}
	Proposition~\ref{prop:monotone-o} is used  in Step 2 of the above proof, where we only use the property that $\hat{Y}_s\lleq \hat{Y}'_s$ is independent of $t_0$ and $\mathbb{T}^{\strA}$.
The independence between $(\hat{Y}_s,\hat{Y}'_s)$ and $\mathcal{F}_{t_0}^{\mathbf A}$ is an additional result that further characterizes the coupling process. We did not use this additional property directly in the proof. 
\end{remark}}

\section{Proof of Theorems~\ref{thm:uniformy optimality} and \ref{thm:comparison}}

It suffices to prove Theorem~\ref{thm:comparison}, as Theorem~\ref{thm:uniformy optimality} is straightforwardly implied by Theorem~\ref{thm:comparison}.
\begin{customthm}{5}
	Let $\TT^{\strA}\in \adset_{\alpha}$ be an arbitrary sequential procedure. Further let
$\TT^{\strGfull{t_0}}$ and $\TT^{\strGfull{t_0+1}}$ be the switching procedures described above, with switching time $t_0$ and $t_0 + 1$, respectively, for some $t_0\geq0$.
Then, $\TT^{\strGfull{t_0}}, \TT^{\strGfull{t_0+1}} \in \adset_{\alpha}$ and under model {\mhomo}
\begin{equation*}
	 		\expe\left(\uti_t(\TT^{\strA})\right)\leq\expe\left(\uti_t({\TT^\strGfull{t_0+1}})\right)\leq  \expe\left(\uti_t({\TT^\strGfull{t_0}})\right)\leq \expe\left(\uti_t(\TTp)\right),
	 	\end{equation*}
	 	for all $t=1,2,\cdots$.
\end{customthm}
\begin{proof}[Proof of Theorem~\ref{thm:comparison}]
{First, note that $\TT^{\strA}\in\adset_{\alpha}$ and $\TT^{\strGfull{t_0}}$ agrees with $\TT^{\strA}\in\adset_{\alpha}$ up to time $t_0$. Thus, $\TT^{\strGfull{t_0}}$ control the LFNR to be no greater than $\alpha$ from time $1$ to $t_0$. Also, according to Proposition~\ref{prop:control}, $\TT^{\strGfull{t_0}}$ controls the LFNR at level $\alpha$ from time $t_0+1$ and onward. Thus, $\TT^{\strGfull{t_0}}\in\adset_{\alpha}$. Similarly, $\TT^{\strGfull{t_0+1}}\in\adset_{\alpha}$. }

	Applying Theorem~\ref{prop:stronger} but replacing $t_0$ by $t_0 + 1$,
and taking expectation on both sides of the inequality, we obtain
	\begin{equation}
		\expe|S^{\strA}_{t_0+1+s}|\leq \expe|S^{\strGfull{t_0+1}}_{t_0+1+s}|
	\end{equation}
	for every $t_0\geq 0$ and $s\geq 0$. That is, for every $t\geq t_0+1$,
	\begin{equation}
				\expe|S^{\strA}_{t}|\leq \expe|S^{\strGfull{t_0+1}}_{t}|.
	\end{equation}
	For $t <  t_0+1$, as $\TT^{\strA}$ and $\TT^{\strGfull{t_0+1}}$ share the same index set, we have
	\begin{equation}
						\expe|S^{\strA}_{t}|=\expe|S^{\strGfull{t_0+1}}_{t}|.
	\end{equation}
Combining the above inequalities, we obtain
	\begin{equation}
		\expe|S^{\strA}_{t}|\leq \expe|S^{\strGfull{t_0+1}}_{t}|
	\end{equation}
	for all $t\geq 0$. This further implies
	\begin{equation}\label{eq:ineq-a-ap}
		\expe\{\uti_t(\TT^{\strA})\} = \sum_{s=1}^t \expe|S^{\strA}_{s}|\leq \sum_{s=1}^t\expe|S^{\strGfull{t_0+1}}_{s}| = \expe\{\uti_t(\TT^{\strGfull{t_0+1} })\}.
	\end{equation}
	This proves the inequality for comparing procedures $\TT^{\strA}$ and $\TT^{\strGfull{t_0+1}}$.
We then compare $\TT^{\strGfull{t_0+1}}$ and
$\TT^{\strGfull{t_0}}$, based on the same arguments above except that
we replace $\TT^{\strA}$ by $\TT^{\strGfull{t_0+1}}$,
and replace $\TT^{\strGfull{t_0+1}}$ by
$\TT^{\strGfull{t_0}}$.
We obtain
	\begin{equation}
		\expe\{
		\uti_t(\TT^{\strGfull{t_0+1}})\}
		\leq \expe\{
		\uti_t(\TT^{\strGfull{t_0}})
		\}
	\end{equation}
	for all $t\geq 0$.

	Finally, we compare $\TT^{\strGfull{t_0}}$ and $\TTp = \TT^{\strGfull{0}}$ using a similar argument, which gives
	\begin{equation}
		\expe\{
		\uti_t(\TT^{\strGfull{t_0}})\}
		\leq \expe\{
		\uti_t(\TTp)
		\}.
	\end{equation}
\end{proof}
{
\section{Proof of Theorem~\ref{thm:uniform-optimal-other}}
First, by Theorem~\ref{prop:stronger}, we directly see that $\expe\{|S_t^{\strA}|\}\leq \expe\{|S_t^{\strG}|\}$, for any sequential procedure $\TT^{\strA}\in\adset_{\alpha}$. Thus, $\expe(\textrm{CD}_t(\TT^*))=K-\expe|S_t^{\strG}|\leq \expe(\textrm{CD}_t(\TT^{\strA}))$, which further implies $\expe(\textrm{CD}_t(\TT^*)) = \inf_{\TT\in\adset_{\alpha}}\expe(\textrm{CD}_t(\TT))$.

We proceed to the analysis of $\textrm{RL}_t(\TT)$. By interchanging the order of double summation, we have
\begin{equation}
	\textrm{RL}_t(\TT)
	= \sum_{k=1}^K (T_k\wedge \tau_k\wedge t) = \sum_{k=1}^K\sum_{s=1}^t \ind(s\leq T_k\wedge \tau_k) = \sum_{s=1}^t\sum_{k=1}^K \ind(s\leq T_k\wedge \tau_k) = \sum_{s=1}^t \sum_{k\in S_s}\{1-\ind(\tau_k<s)\}
\end{equation}
which leads to
\begin{equation}
	\expe\{\textrm{RL}_t(\TT)\} = \sum_{s=1}^t \expe\big[\sum_{k\in S_s}\{1-\ind(\tau_k<s)\}\big] = \sum_{s=1}^t \expe\Big[\expe\big[\sum_{k\in S_s}\{1-\ind(\tau_k<s)\}\big] |\fil_{s} \Big].
\end{equation}
Recall that $W_{k,s}= \prob(\tau_k<s|\fil_s)$ and $S_s\in\fil_s$. The above display yields
\begin{equation}
	\expe\{\textrm{RL}_t(\TT)\} = \sum_{s=1}^t \expe\big\{\sum_{k\in S_s}(1-W_{k,s})\big\}.
\end{equation}
From the above equation, we can see that in order to show
$\expe\{\textrm{RL}_t(\TT^*)\}=\sup_{\TT\in\adset_{\alpha}}\expe\{\textrm{RL}_t(\TT)\}$, it suffices to show 
$
	\expe\big\{\sum_{k\in S_t}(1-W_{k,t})\big\}
$
is maximized 
for every $t=1,2,\cdots$, which follows directly  from the following extension of Theorem~\ref{prop:stronger}.
\begin{proposition}\label{prop:Psi-optimal}
Suppose that 
model {\mhomo} holds.
	For any $t_0, s\geq 0$ and any sequential detection procedure $\TT^{\strA}\in\adset_{\alpha}$, let
	$\fil^\strA_t$ be the information filtration  and $S^{\strA}_t$ be the set of active streams at time $t$ given by $\TT^{\strA}$.
	Then,
\begin{equation}\label{eq:induction-statement-other}
			\expe\left[\Psi([W_{S_{t_0+s}^{\strA},t_0+s}^{\strA}])\middle| \fil^\strA_{{t_0}}\right]
				\leq \expe\left[\Psi([W_{S_{t_0+s}^{\strGfull{t_0}},t_0+s}^{\strGfull{t_0}}])\middle| \fil^\strA_{t_0}\right] \text{ a.s.},
\end{equation}
	where $\Psi:\spaceo\to \Real$ is defined as $\Psi(\ww)=\sum_{k=1}^{m}(1-w_k)$ for $\ww=(w_1,\cdots, w_m)\in\spaceo$.
\end{proposition}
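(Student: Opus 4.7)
The plan is to recycle the three-step induction scheme used in the proof of Theorem~\ref{prop:stronger}, replacing the cardinality functional $|\cdot|$ (equivalently $\dim$) by $\Psi$ throughout. The reason this substitution is viable is that the original argument only uses two properties of $\dim$: that it is $\lleq$-antitone on $\spaceo$, and that expectations against the couplings produced by Propositions~\ref{prop:CouplingForProp} and~\ref{prop:monotone-o} respect this antitonicity. Since both couplings are independent of the choice of aggregate functional, once I check that $\Psi$ is also $\lleq$-antitone the whole skeleton of the earlier proof transports verbatim.

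The first step is to verify the key monotonicity. For $\uu\lleq\uu'\in\spaceo$ with $m=\dim(\uu)\geq m'=\dim(\uu')$ and $u_i\leq u_i'$ for $i\leq m'$, I compute
\begin{equation*}
\Psi(\uu)-\Psi(\uu')=\sum_{i=1}^{m'}(u_i'-u_i)+\sum_{i=m'+1}^{m}(1-u_i)\geq 0,
\end{equation*}
because all coordinates lie in $[0,1]$; boundary cases with $\uu'=\ab$ are handled by the convention $\Psi(\ab)=0$ together with $u_i\leq 1$. I then set up the induction on $s$. The base case $s=0$ is automatic since both sides of \eqref{eq:induction-statement-other} are equal. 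For the inductive step, assuming the statement holds for all $\TT^\strA\in\adset_{\alpha}$ and all $t_0$ with $s\leq s_0$, I will prove it for $s=s_0+1$.

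The three substeps mirror the earlier proof. In Step 1, I replace $t_0$ by $t_0+1$ and $s$ by $s_0$ in the induction hypothesis applied to $\TT^\strA$, then take conditional expectation with respect to $\fil^\strA_{t_0}$ and use the tower property to obtain $\expe[\Psi([W^\strA_{S^\strA_{t_0+s_0+1},t_0+s_0+1}])|\fil^\strA_{t_0}]\leq \expe[\Psi([W^{\strGfull{t_0+1}}_{S^{\strGfull{t_0+1}}_{t_0+s_0+1},t_0+s_0+1}])|\fil^\strA_{t_0}]$ a.s. In Step 2, I define
\begin{equation*}
\phi^\Psi_{t,s}(\uu)=\expe\!\left[\Psi([W^{\strGfull{t}}_{S^{\strGfull{t}}_{t+s},t+s}])\,\middle|\,[W^{\strGfull{t}}_{S^{\strGfull{t}}_{t},t}]=\uu\right].
\end{equation*}
Proposition~\ref{prop:monotone-o} gives, for $\uu\lleq\uu'$, a coupling $(\hat Y_s,\hat Y'_s)$ with $\hat Y_s\lleq\hat Y'_s$ a.s., and the antitonicity of $\Psi$ established above yields $\Psi(\hat Y_s)\geq \Psi(\hat Y'_s)$ a.s.; taking expectations shows $\phi^\Psi_{t,s}$ is $\lleq$-antitone and independent of $t$ and of $\TT^\strA$. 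Then, writing each side as a conditional expectation of $\phi^\Psi_{t_0+1,s_0}$ evaluated at $[W^{\strGfull{t_0}}_{S^{\strGfull{t_0}}_{t_0+1},t_0+1}]$ or $[W^{\strGfull{t_0+1}}_{S^{\strGfull{t_0+1}}_{t_0+1},t_0+1}]$, and invoking Proposition~\ref{prop:CouplingForProp} conditionally on the realized history $\{X_{S^\strA_r,r}=x_r, S^\strA_r=s_r, 1\leq r\leq t_0\}$ to produce $(\hat W,\hat W')$ with $\hat W\lleq \hat W'$, I conclude $\expe[\Psi([W^{\strGfull{t_0+1}}_{S^{\strGfull{t_0+1}}_{t_0+s_0+1},t_0+s_0+1}])|\fil^\strA_{t_0}]\leq \expe[\Psi([W^{\strGfull{t_0}}_{S^{\strGfull{t_0}}_{t_0+s_0+1},t_0+s_0+1}])|\fil^\strA_{t_0}]$ a.s. Step 3 chains the two inequalities to close the induction. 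The main obstacle is conceptually slight: it is the antitonicity check for $\Psi$, which crucially uses that posteriors lie in $[0,1]$ so that the extra coordinates contributed by the longer vector add only nonnegative terms. All the heavy lifting, namely the monotone coupling on $(\spaceo,\lleq)$ and the one-step comparison between $\strGfull{t_0}$ and $\strGfull{t_0+1}$, is already furnished by Propositions~\ref{prop:CouplingForProp} and~\ref{prop:monotone-o}, so no new technical machinery is required.
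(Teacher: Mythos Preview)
Your proposal is correct and follows essentially the same route as the paper: both recycle the three-step induction from Theorem~\ref{prop:stronger}, replacing the cardinality functional by $\Psi$, defining the analogue $\tilde\phi$ (your $\phi^\Psi$), and invoking Propositions~\ref{prop:CouplingForProp} and~\ref{prop:monotone-o} together with the $\lleq$-antitonicity of $\Psi$. Your explicit verification of the antitonicity is exactly the key property the paper singles out as the one new ingredient.
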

In the rest of the section, we provide the proof of Proposition~\ref{prop:Psi-optimal}.
\begin{proof}[Proof of Proposition~\ref{prop:Psi-optimal}]
	The proof of Proposition~\ref{prop:Psi-optimal} is similar to that of Theorem~\ref{prop:stronger}. We will only state the main differences and omit the repetitive details.

	First, by replacing $|S^{\strA}_{t}\big|$ with $\Psi([W^{\strA}_{S^{\strA}_{t},t}])$ for $t$ taking different values in the proof of Theorem~\ref{prop:stronger}, we obtain the following inequality that is similar to \eqref{eq:end-of-step1}
	\begin{equation}\label{eq:end-of-step1-psi}
		\expe\left[\Psi([W^{\strA}_{S^{\strA}_{t_0+s_0+1},t_0+s_0+1}])\Big\vert\fil_{t_0}^{\strA}\right]
				\leq \expe\left[\Psi(W^{\strGfull{t_0+1}}_{{S}_{t_0+s_0+1},t_0+s_0+1}\Big\vert \fil_{t_0}^{\strA}\right] \text{ a.s.}
	\end{equation}
	for all $t_0$ and $s_0$.
Next, we define a function $\tilde{\phi}_{t,s}:\spaceo\to \mathbb{R}$,
\begin{equation}\label{eq:phi-definition}
	\tilde{\phi}_{t,s}(\uu)=\expe\Big[\Psi([W^{\strGfull{t}}_{S^{\strGfull{t}}_{t+s},t+s}])\Big|\big[W^{\strGfull{t}}_{S^{\strGfull{t}}_{t},t}\big]=\uu\Big]
\end{equation}
for $t,s\geq 0$. Then, we replace the $\phi$ with $\tilde{\phi}$ in the proof of Theorem~\ref{prop:stronger} and obtain the following inequality that is similar to \eqref{eq:end-of-step2}.
\begin{equation}\label{eq:end-of-step2-psi}
 	\expe\left[\Psi([W^{\strGfull{t_0}}_{S^{\strGfull{t_0}}_{t_0+s_0+1},t_0+s_0+1}])\Big\vert \fil_{t_0}^{\strA}\right]\geq \expe\left[\Psi([W^{\strGfull{t_0+1}}_{S^{\strGfull{t_0+1}}_{t_0+s_0+1},t_0+s_0+1}])\Big\vert \fil_{t_0}^{\strA}\right]~~a.s.
 \end{equation}
 We point out that to arrive at the above inequality, the following property about $\Psi$ is used: $\Psi(\ww')\lleq\Psi(\ww)$
  for any $\ww,\ww'\in\spaceo$ satisfying $\ww\lleq\ww'$.

Combining \eqref{eq:end-of-step1-psi} and \eqref{eq:end-of-step2-psi}, we obtain
\begin{equation}\label{eq:end-of-step3-psi}
	\expe\left[\Psi([W^{\strA}_{S^{\strA}_{t_0+s_0+1},t_0+s_0+1}])\Big\vert\fil_{t_0}^{\strA}\right]\leq \expe\left[\Psi([W^{\strGfull{t_0}}_{S^{\strGfull{t_0}}_{t_0+s_0+1},t_0+s_0+1}])\Big\vert \fil_{t_0}^{\strA}\right] \text{ a.s.,}
\end{equation}
which extends \eqref{eq:end-of-step3} and completes the proof.
\end{proof}

}
\section{Proof of Propositions~\ref{prop:CouplingForProp} and \ref{prop:monotone-o}}\label{sec:proof-proposition-ordering}
The proof of Propositions~\ref{prop:CouplingForProp} and \ref{prop:monotone-o} is involved. We will first introduce some concepts in stochastic ordering, followed by several useful lemmas, and then present the proof of the propositions.
\subsection{Stochastic ordering}
We first review a few important concepts and classic results
 on partially ordered spaces. More details about stochastic ordering and coupling can be found in \cite{thorisson2000coupling,lindvall2002lectures,kamae1977stochastic,lindvall1999strassen}.
\begin{definition}[Partially Ordered Space (pospace)]
	A space $(\mathcal{S},\lleq)$ is said to be a partially ordered space (or pospace) if $\lleq$ is a partial order relation over the topological space $\mathcal{S}$ and the set $\{(x,y)\in\mathcal{S}^2: x\lleq y\}$ is a closed subset of $\mathcal{S}^2$. 
\end{definition}
\begin{definition}[Increasing functions over a partially ordered space]
Let ${(\mathcal{S}_1,\lleq_{\mathcal{S}_1})}$ and ${(\mathcal{S}_2,\lleq_{\mathcal{S}_2})}$ be  partially ordered polish spaces. A map $g:\mathcal{S}_1\to\mathcal{S}_2$ is said to be increasing if $g(u)\lleq_{\mathcal{S}_2} g(v)$ for all $u\lleq_{\mathcal{S}_1} v$ with $u,v\in \mathcal{S}_1$.
\end{definition}
\begin{definition}[Stochastic ordering of real-valued random variables]
	Let $X$ and $Y$ be two random variables, we say $X$ is stochastically less than or equal to $Y$, if $\prob(X\geq x)\leq \prob(Y\geq x)$ for all real number $x$. In this case, we write $X\leq_{st}Y$.
\end{definition}
The following statements give some equivalent definitions for $X\leq_{st}Y$
\begin{fact}\label{fact:straseen-rv}
	The following statements are equivalent.
\begin{enumerate}
	\item $X\leq_{st}Y$.
	\item For all increasing, bounded, and measurable functions $g: \mathbb{R}\to \mathbb{R}$,
	$\expe(g(X))\leq \expe(g(Y))$.
	\item There exists a coupling $(\hat{X},\hat{Y})$ such that $ \hat{X}\eqd X$, $\hat{Y}\eqd Y$, and
	\begin{equation}
		\hat{X}\leq \hat{Y} \text{ a.s.}
	\end{equation}
	Here, $\eqd$ denotes that the random variables on both sides have an identical distribution.
\end{enumerate}
In particular, the equivalence between 1 and 3 is known as the Strassen's Theorem \citep{strassen1965existence}.
\end{fact}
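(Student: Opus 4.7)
The plan is to prove the three implications $(1) \Rightarrow (2) \Rightarrow (1)$ and $(1) \Leftrightarrow (3)$, which together give the full equivalence. All three conditions are standard, and for real-valued random variables the coupling can be constructed very explicitly via quantile functions, so no abstract pospace machinery is needed here.

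For $(1) \Rightarrow (2)$, I would first reduce to the case of nonnegative $g$ by adding a constant (allowed since $g$ is bounded). Then I would represent an increasing bounded measurable $g$ via the layer-cake formula $g(x) - \inf g = \int_0^{\infty} \ind(g(x) > t)\, dt$, and observe that for each $t$ the set $\{x : g(x) > t\}$ is an upper set of the form $(c_t, \infty)$ or $[c_t, \infty)$ by monotonicity of $g$. Applying Fubini and the hypothesis $\prob(X \geq c) \leq \prob(Y \geq c)$ for every $c$ then yields $\expe g(X) \leq \expe g(Y)$. For $(2) \Rightarrow (1)$, I would simply take $g(x) = \ind(x \geq c)$, which is increasing, bounded, and measurable, and read off $\prob(X \geq c) \leq \prob(Y \geq c)$.

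For $(3) \Rightarrow (1)$, since $\hat X \leq \hat Y$ a.s. we have $\ind(\hat X \geq c) \leq \ind(\hat Y \geq c)$ pointwise a.s., and taking expectations together with $\hat X \eqd X$ and $\hat Y \eqd Y$ gives (1). The remaining direction $(1) \Rightarrow (3)$ is the content of Strassen's theorem in this scalar setting, which I would handle by the quantile coupling. Let $F_X$ and $F_Y$ be the distribution functions and define their right-continuous inverses $F_X^{-1}(u) = \inf\{x : F_X(x) \geq u\}$ and similarly for $F_Y$. Take $U \sim \text{Uniform}(0,1)$ on some probability space and set $\hat X = F_X^{-1}(U)$, $\hat Y = F_Y^{-1}(U)$; the standard fact that $F_X^{-1}(U) \eqd X$ gives the marginal conditions. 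The stochastic ordering $\prob(X \geq x) \leq \prob(Y \geq x)$ for every $x$ is equivalent to $F_X(x) \geq F_Y(x)$ for every $x$, which in turn gives $F_X^{-1}(u) \leq F_Y^{-1}(u)$ for every $u \in (0,1)$; hence $\hat X \leq \hat Y$ a.s.

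The only subtle step is verifying the equivalence $F_X \geq F_Y$ pointwise $\iff$ $F_X^{-1} \leq F_Y^{-1}$ pointwise, which I would dispatch by the definition of the generalized inverse: if $F_X(x) \geq F_Y(x)$ for all $x$, then $\{x : F_Y(x) \geq u\} \subseteq \{x : F_X(x) \geq u\}$, so the infima satisfy the reverse inequality. I do not anticipate a genuine obstacle here; the proof is a short and classical chain of implications, and the hardest piece, the $(1) \Rightarrow (3)$ coupling, is already made concrete by the quantile construction.
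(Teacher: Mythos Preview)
Your proposal is correct and complete; the argument via layer-cake for $(1)\Rightarrow(2)$, indicator functions for $(2)\Rightarrow(1)$, and the quantile coupling for $(1)\Rightarrow(3)$ is the standard textbook route and every step goes through as you describe.

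There is nothing to compare against, however: the paper does not prove this statement. It is recorded as a \emph{Fact}, with the equivalence $(1)\Leftrightarrow(3)$ attributed to Strassen (1965), and is used as background machinery rather than something to be established. So your proposal goes beyond what the paper does by actually supplying the argument. If anything, your explicit quantile construction is more concrete than an appeal to the general Strassen theorem, since it exploits the total order on $\mathbb{R}$ to produce the coupling directly rather than invoking the abstract pospace version.
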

\begin{definition}[Stochastic ordering on a partially ordered polish space]
	Let $(\mathcal{S},\lleq)$ be a partially ordered polish space, and let $X$ and $Y$ be $\mathcal{S}$-valued random variables. We say $Y$ stochastically dominates $X$, denoted by $X\lleq_{st}Y$ if for all bounded, increasing, and measurable function $g:\mathcal{S}\to \mathbb{R}$, $\expe(g(X))\leq \expe(g(Y))$.
\end{definition}
\begin{fact}[Strassen's theorem for polish pospace, Theorem 2.4 in \cite{lindvall2002lectures}]\label{fact:strassen-pospace}
	Let $(\mathcal S,\lleq)$ be a polish partially ordered space, and let $X$ and $Y$ be $\mathcal{S}$-valued random variables. Then, $X\lleq_{st}Y$ if and only if there exists a coupling $(\hat{X},\hat{Y})$ such that $\hat{X}\eqd X$, $\hat{Y}\eqd Y$ and $\hat{X}\lleq \hat{Y}$ a.s.
\end{fact}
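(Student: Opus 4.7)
The plan is to establish the biconditional by handling the two directions separately. The reverse implication is immediate from monotonicity; the forward implication is the substantive content of the theorem and follows a classical argument which I would cite rather than reproduce in full.

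For the easy direction, suppose a coupling $(\hat X, \hat Y)$ exists on some probability space with $\hat X \eqd X$, $\hat Y \eqd Y$, and $\hat X \lleq \hat Y$ a.s. If $g : \mathcal S \to \mathbb R$ is bounded, increasing, and measurable, then $g(\hat X) \leq g(\hat Y)$ a.s., so
\begin{equation}
\expe[g(X)] = \expe[g(\hat X)] \leq \expe[g(\hat Y)] = \expe[g(Y)],
\end{equation}
which is exactly the defining condition of $X \lleq_{st} Y$.

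For the hard direction, assume $X \lleq_{st} Y$; one must construct a coupling whose mass sits on the closed set $A := \{(x,y) \in \mathcal S^2 : x \lleq y\}$, where closedness is built into the definition of a pospace. I would follow the approach of \cite{kamae1977stochastic}. Let $\Pi(X,Y)$ denote the convex set of probability measures on $\mathcal S^2$ with marginals given by the laws of $X$ and $Y$; by tightness of each marginal on the polish space $\mathcal S$ and Prokhorov's theorem, $\Pi(X,Y)$ is weakly compact. The goal is to exhibit $\pi \in \Pi(X,Y)$ with $\pi(A) = 1$. A Hahn--Banach style separation argument reduces this to ruling out a continuous linear functional that strictly separates the couplings supported on $A$ from the rest of $\Pi(X,Y)$; the hypothesis $X \lleq_{st} Y$ furnishes exactly the monotone test functions needed to preclude such a separation. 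An alternative route is via disintegration of the law of $Y$ together with a measurable selection theorem to construct, for each $x$, a conditional law supported on $\{y \in \mathcal S : x \lleq y\}$.

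The main obstacle is converting the abstract scalar inequality $\expe[g(X)] \leq \expe[g(Y)]$ for bounded increasing $g$ into a joint distribution on $\mathcal S^2$ concentrated on $A$; this is precisely where the polish structure, the closedness of the order, and either the duality or measurable-selection step genuinely enter, and it also explains why the result requires the pospace assumption rather than merely a measurable partial order. Since Fact~\ref{fact:strassen-pospace} is invoked as a black box to support the couplings used in the proofs of Propositions~\ref{prop:CouplingForProp} and \ref{prop:monotone-o}, the full argument is deferred to \cite{lindvall2002lectures}.
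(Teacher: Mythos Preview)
Your proposal is correct, and in fact goes beyond what the paper does: the paper states Fact~\ref{fact:strassen-pospace} without proof, citing it directly as Theorem~2.4 in \cite{lindvall2002lectures} and using it purely as a black box in the proofs of Propositions~\ref{prop:CouplingForProp} and \ref{prop:monotone-o} (and Lemma~\ref{lemma:monotone-kka}). Your sketch of both directions is accurate and your final sentence, deferring the full argument to \cite{lindvall2002lectures}, matches exactly how the paper treats this result.
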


\begin{definition}[Stochastic dominance for Markov kernels]\label{def:kernel}
Let $K$ and $\tilde{K}$ be transition kernels for Markov chains over a partially ordered polish space $(\mathcal{S},\lleq)$. The transition kernel $\tilde{K}$ is said to stochastically dominate $K$ if
\begin{equation}
	x\lleq y \implies K(x,\cdot)\lleq_{st} \tilde{K}(y,\cdot).
\end{equation}
In particular, if the above is satisfied for the same kernel $K=\tilde{K}$, then we say $K$ is stochastically monotone.	
\end{definition}
\begin{fact}[Strassen's theorem for Markov chains over a polish pospace]\label{fact:strassen-markov}
Let $\{X_t\}$ and $\{Y_t\}$ be Markov chains over a partially ordered polish space, $(\mathcal{S},\lleq)$, with transition kernels $K$ and $\tilde{K}$ where $\tilde{K}$ stochastically dominates $K$. Then, for all initial points $x_0\lleq y_0$, there is a coupling $\{(\hat{X}_t,\hat{Y}_t)\}$ of $\{X_t\}$ starting at $x_0$ and $\{Y_t\}$ starting at $y_0$ such that
\begin{equation}
	\hat{X}_t\lleq \hat{Y}_t \qquad \forall t~~ a.s.
\end{equation}
\end{fact}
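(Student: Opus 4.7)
The plan is to build the coupling inductively on $t$, invoking Strassen's theorem for polish pospaces (Fact~\ref{fact:strassen-pospace}) at each step. Initialize $\hat{X}_0=x_0$ and $\hat{Y}_0=y_0$; the base case $\hat{X}_0\lleq\hat{Y}_0$ holds by hypothesis. For the inductive step, assume that $(\hat{X}_s,\hat{Y}_s)$ for $s\leq t$ has been defined on a common probability space, with the correct Markov-chain marginal laws and with $\hat{X}_t\lleq\hat{Y}_t$ almost surely. By the stochastic dominance of the kernels (Definition~\ref{def:kernel}), whenever $x\lleq y$ we have $K(x,\cdot)\lleq_{st}\tilde{K}(y,\cdot)$, so Fact~\ref{fact:strassen-pospace} supplies a probability measure $Q_{x,y}$ on $\mathcal{S}\times\mathcal{S}$ with marginals $K(x,\cdot)$ and $\tilde{K}(y,\cdot)$ that concentrates on the order set $\{(u,v): u\lleq v\}$. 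If I can realize a draw $(\hat{X}_{t+1},\hat{Y}_{t+1})\sim Q_{\hat{X}_t,\hat{Y}_t}$ conditional on the history, the marginals evolve under the right kernels and $\hat{X}_{t+1}\lleq\hat{Y}_{t+1}$ a.s., closing the induction.

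The main obstacle, and the step I would need to handle carefully, is choosing the family $\{Q_{x,y}:x\lleq y\}$ jointly \emph{measurably} in $(x,y)$ so that the above conditional draw is a bona fide stochastic kernel on $\mathcal{S}^2$. This is a measurable selection problem: one shows that the set-valued map sending $(x,y)$ to the collection of couplings of $K(x,\cdot)$ and $\tilde{K}(y,\cdot)$ supported on $\{\lleq\}$ has nonempty, closed, convex values in the space of probability measures on $\mathcal{S}^2$ (with the weak topology) and has a measurable graph. In the polish pospace setting this follows from the Kuratowski-Ryll-Nardzewski selection theorem, or alternatively one may verify measurability directly by tracking the explicit max-flow / quantile-type construction that underlies Strassen's theorem.

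Given the measurable coupling kernel $(x,y)\mapsto Q_{x,y}$, I would apply the Ionescu-Tulcea extension theorem to the sequence of transition kernels on $\mathcal{S}^2$ defined by $((x_t,y_t),\cdot)\mapsto Q_{x_t,y_t}$ to produce the joint law of the entire coupled process $\{(\hat{X}_t,\hat{Y}_t)\}_{t\geq 0}$ on a single probability space. By construction, $\{\hat{X}_t\}$ and $\{\hat{Y}_t\}$ are Markov chains with the prescribed kernels $K$ and $\tilde{K}$ respectively, both started as required, and the componentwise inequality $\hat{X}_t\lleq\hat{Y}_t$ propagates through each step with probability one, so it holds for all $t$ simultaneously a.s., completing the proof.
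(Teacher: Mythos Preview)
Your proof sketch is correct and follows the standard inductive construction: couple one step at a time via Fact~\ref{fact:strassen-pospace}, handle the measurable dependence of the one-step coupling on the current state pair via a selection argument, and assemble the full process with Ionescu--Tulcea. The paper, however, does not supply its own proof of this statement at all; it records Fact~\ref{fact:strassen-markov} as a known result and simply cites it as a special case of Theorem~5.8 in \cite{lindvall2002lectures}. So there is nothing to compare against beyond noting that your argument is the usual one underlying that reference.
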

Fact~\ref{fact:strassen-markov} is a special case of Theorem 5.8 in \cite{lindvall2002lectures}.
\subsection{Stochastic ordering and Markov chains on $\spaceu$ and $\spaceo$}\label{sec:lemma-odering}
In this section, we provide some supporting lemmas regarding properties of the partial order relationship defined in Section~
\ref{sec:proof-sketch}, and show stochastic ordering of several Markov chains. The proof of these lemmas is given in Section~\ref{sec:proof-lemma}.

Recall that in Section~\ref{sec:proof-sketch}, we define a space
\begin{equation}
 \spaceo=\bigcup_{k=1}^K \Big\{\vv=(v_1,\cdots,v_k)\in[0,1]^k: 0\leq v_1\leq \cdots v_k\leq 1
		\Big\}\cup \{\ab\}.
\end{equation}
Here, we also define a space with unordered elements.
\begin{equation}
	 \spaceu=\bigcup_{k=1}^K [0,1]^k\cup \{\ab\}.
\end{equation}
We first present a lemma showing that the space $(\spaceo,\lleq)$ is a polish partial order space.
\begin{restatable}{lemma}{pospace}\label{lemma:pospace}
	$(\spaceo,\lleq)$ is a partially ordered space. In addition, $\spaceo$ is a polish space equipped with the metric
\begin{equation}\label{eq:metric}
	d(\uu,\vv)=
	\begin{cases}
		\max_{1\leq m\leq \card(\uu)}|u_m-v_m| & \text{ if } \card(\uu)=\card(\vv)\geq 1\\
		0 & \text{ if } \uu=\vv=\ab\\
		2 & \text{ if } \card(\uu)\neq\card(\vv)
	\end{cases}
\end{equation}
for $\uu,\vv\in\spaceo$.
\end{restatable}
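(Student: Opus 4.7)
The plan is to verify two separate assertions: first, that $(\spaceo,\lleq)$ satisfies the two requirements of a pospace (the relation $\lleq$ is a genuine partial order, and the set $\{(\uu,\vv)\in\spaceo^2:\uu\lleq\vv\}$ is closed in the product topology); and second, that $(\spaceo,d)$ is polish (complete and separable). Throughout, the only structural subtlety is how the empty vector $\ab$ and the length-dependent definition of $d$ interact with ordering and topology, so the verifications reduce to routine checks once a few cases are separated. I would start by observing that under the definition of $\lleq$, the vector $\ab$ functions as the maximum element: $\uu\lleq\ab$ for every $\uu$, while $\ab\lleq\vv$ forces $\vv=\ab$ since $\card(\ab)=0\geq\card(\vv)$ requires $\card(\vv)=0$.

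With that observation, the partial-order axioms fall quickly. Reflexivity is immediate in both regimes. For antisymmetry, if $\uu\lleq\vv$ and $\vv\lleq\uu$ with neither equal to $\ab$, then $\card(\uu)=\card(\vv)$ forces comparisons $u_i\leq v_i\leq u_i$ for all $i$; the edge cases collapse to $\uu=\vv=\ab$ by the previous observation. Transitivity splits cleanly: if $\ww=\ab$ we are done, and if $\ww\neq\ab$ then necessarily $\vv\neq\ab$ (otherwise $\vv\lleq\ww$ would force $\ww=\ab$), so one simply chains $\card(\uu)\geq\card(\vv)\geq\card(\ww)$ and $u_i\leq v_i\leq w_i$ for $i\leq\card(\ww)$.

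Before handling closedness I would verify that $d$ as defined is indeed a metric; the triangle inequality is straightforward case-by-case using that $d\leq 1$ when lengths agree and $d=2$ when they differ, so any path through a vector of different length contributes at least $2$ to the right-hand side. The key topological fact that drives both closedness and completeness is that $d(\uu_n,\uu)\to 0$ forces $\card(\uu_n)=\card(\uu)$ for all large $n$, since any length mismatch gives distance $2$. Closedness then follows: for $(\uu_n,\vv_n)\to(\uu,\vv)$ with $\uu_n\lleq\vv_n$, I split on whether $\uu=\ab$ or $\vv=\ab$ (handled by the top-element observation, which forces the limits to coincide with $\ab$), and otherwise use that eventually $\card(\uu_n)=\card(\uu)\geq\card(\vv)=\card(\vv_n)$ and the coordinate inequalities $u_{n,i}\leq v_{n,i}$ pass to the limit.

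For the polish property, completeness is again immediate from the length-stabilization observation: a Cauchy sequence in $\spaceo$ must eventually have a common length $k$, so it is either ultimately equal to $\ab$ or it is a Cauchy sequence in the closed subset $\{0\leq v_1\leq\cdots\leq v_k\leq 1\}$ of $[0,1]^k$ under the sup metric, which is complete and whose limit preserves the ordering and therefore lies in $\spaceo$. Separability follows by taking the countable collection of vectors of all finite lengths with rational entries arranged in nondecreasing order, together with $\ab$; these are dense because within each length-$k$ slice the rationals are dense in $[0,1]^k$ under the sup metric. I do not expect a genuine obstacle; the main thing to be careful about is simply keeping track of the distinguished role of $\ab$ in every case analysis.
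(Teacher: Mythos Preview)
Your proposal is correct and covers all the needed checks. The paper's own proof is considerably terser and differs from yours in two tactical respects. First, for the polish property the paper simply notes that $\spaceo$ is a finite disjoint union of the polish spaces $\{0\leq u_1\leq\cdots\leq u_m\leq 1\}$ (for $m=1,\dots,K$) together with $\{\ab\}$, and invokes the general fact that such a union is polish; you instead verify completeness and separability directly from the metric. Second, for closedness of $\{(\uu,\vv):\uu\lleq\vv\}$ the paper argues via the complement: it takes $\uu\not\lleq\vv$, distinguishes the cases $\card(\uu)<\card(\vv)$ and $\card(\uu)\geq\card(\vv)$ with $u_m>v_m$ for some $m$, and exhibits explicit $d$-balls (radius $1/2$ in the first case, $(u_m-v_m)/4$ in the second) around $\uu$ and $\vv$ in which the relation still fails. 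Your sequential argument is equivalent in a metric space and arguably cleaner. The paper also does not bother to verify the partial-order axioms or the metric axioms explicitly, whereas you do; your treatment is more self-contained, while the paper's is more economical.
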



\newcommand{\Hu}{H_{\mathrm{u}}}
\newcommand{\Iu}{I_{\mathrm{u}}}
\newcommand{\Ho}{H_{\mathrm{o}}}
\newcommand{\Io}{I_{\mathrm{o}}}
We define mappings $\Io:\spaceo\to\{0,\cdots, K\}$ and $\Ho:\spaceo\to\spaceo$ as follows. For any $\uu\in\spaceo$,
define
\begin{equation}\label{eq:io}
	\Io(\uu)=
\begin{cases}
	\sup\Big\{n: \sum_{i=1}^n u_i\leq \alpha n, n\in\{0,...,\card(\uu)\}\Big\} &\text{ if } \card(\uu)\geq 1, \uu=(u_1,..., u_{\card(\uu)})\\
	0 &\text{ if }\card(\uu)=0,
\end{cases}
\end{equation}
and
\begin{equation}
 \Ho(\uu) =
	\begin{cases}
			(u_1,\cdots, u_{\Io(\uu)}) &\text{ if } \Io(\uu)\geq 1,\\
			\ab &\text{ otherwise.}
	\end{cases}
\end{equation}
The mapping $\Ho$ is closely related to the one-step update rule in Algorithm~\ref{alg:one-step-rule1}, as summarized in the next lemma.
\begin{restatable}{lemma}{lemmaonestepho}\label{lemma:one-step-ho}
	If we input $(W_{k,t})_{k\in S_t}=\uu$ and an index set $S_t$ with $|S_t|=\card(\uu)$ in Algorithm~\ref{alg:one-step-rule1}, then the output $S_{t+1}$ satisfies
	\begin{equation}
		|S_{t+1}|=\Io(\uu)\text{ and }[(W_{k,t})_{k\in S_{t+1}}]=\Ho([\uu]).
	\end{equation}
\end{restatable}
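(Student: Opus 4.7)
The plan is to unpack Algorithm~\ref{alg:one-step-rule1} step by step and check directly that its output matches the expressions built from $\Io$ and $\Ho$. Write $\uu=(u_1,\ldots,u_{|S_t|})$ for the input vector $(W_{k,t})_{k\in S_t}$ and $[\uu]=(u_{(1)},\ldots,u_{(|S_t|)})$ for its order statistic. Step 1 of the algorithm relabels the indices in $S_t$ as $k_1,\ldots,k_{|S_t|}$ so that $W_{k_1,t}\leq W_{k_2,t}\leq\cdots\leq W_{k_{|S_t|},t}$; this is precisely the sorting operation, so $W_{k_i,t}=u_{(i)}$ for each $i$, and since the lemma takes $\uu\in\spaceo$ the input is already sorted, giving $u_{(i)}=u_i$.

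Next I would translate Steps 2--3 into the definition of $\Io$. The condition $R_n=n^{-1}\sum_{i=1}^n W_{k_i,t}\leq \alpha$ used in Step 3 is equivalent to $\sum_{i=1}^n u_{(i)}\leq\alpha n$. Together with the convention $R_0=0$ (permitting $n=0$) and the choice of the largest such $n\in\{0,\ldots,|S_t|\}$, this is exactly the value $\Io([\uu])$ by definition, and because $\uu=[\uu]$ we have $\Io(\uu)=\Io([\uu])$. Hence Step 3 returns $n=\Io(\uu)$ and the algorithm outputs $S_{t+1}=\{k_1,\ldots,k_n\}$ (or $\emptyset$ when $n=0$), so $|S_{t+1}|=\Io(\uu)$.

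For the second equation I would simply read off the ordered posterior probabilities of the surviving streams. We have $(W_{k,t})_{k\in S_{t+1}}=(W_{k_1,t},\ldots,W_{k_n,t})=(u_{(1)},\ldots,u_{(n)})$, which is already sorted and therefore equals its own order statistic. Thus $[(W_{k,t})_{k\in S_{t+1}}]=(u_{(1)},\ldots,u_{(\Io([\uu]))})=\Ho([\uu])$; the $n=0$ case is handled by the convention that both sides equal $\ab$. The argument is essentially bookkeeping, with no substantive obstacle; the only minor points to be careful about are the $n=0$ edge case and the observation that the tie-breaking rule in Step 1, which fixes an ordering among equal posteriors, is immaterial for the lemma because the order statistic $[\uu]$ does not depend on how such ties are resolved.
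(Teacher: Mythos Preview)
Your proof is correct and follows essentially the same bookkeeping approach as the paper: both arguments simply trace through Steps~1--3 of Algorithm~\ref{alg:one-step-rule1} and match them to the definitions of $\Io$ and $\Ho$. One minor remark: you assume $\uu\in\spaceo$ to make sense of $\Io(\uu)$, whereas the paper's own proof does not impose this and concludes $|S_{t+1}|=\Io([\uu])$ (which is how the lemma is actually applied elsewhere); either reading is fine since $\Io$ is only defined on $\spaceo$, and your argument goes through verbatim in the general case with $[\uu]$ in place of $\uu$.
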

Other compound sequential detection rules in $\adset_{\alpha}$ are characterized through the next lemma.
\begin{restatable}{lemma}{lemmacontrol}\label{lemma:control}
	$\TT=(T_1,\cdots,T_K)\in\adset_{\alpha}$ if and only if
	\begin{equation}\label{eq:constrain-equi}
	\TT\in\adset \text{ and } \sum_{k=1}^K \ind(T_k> t) W_{k,t}\leq \alpha\cdot\sum_{k=1}^K\ind(T_{k}> t) \text{ for } t=0,1,2,\cdots
\end{equation}
The above expression is equivalent to
\begin{equation}
	S_{t+1}\text{ is } \mathcal{F}_{t} \text{ measurable } ,~S_{t+1}\subset S_{t},	\sum_{k\in S_{t+1}} W_{k,t}\leq \alpha\cdot|S_{t+1}|
\end{equation}
	for  $t=0,1,2,\cdots$, and $T_{k}=\sup\{t: k\in S_t\}$.
\end{restatable}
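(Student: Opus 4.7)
The plan is to establish the equivalence by a direct computation exploiting the $\mathcal{F}_t$-measurability of $S_{t+1}$ together with the tower property and the definition of $W_{k,t}$. Concretely, since $S_{t+1}$ is $\mathcal{F}_t$-measurable and each $W_{k,t} = \prob(\tau_k < t \mid \mathcal{F}_t)$ is by definition the $\mathcal{F}_t$-conditional expectation of $\ind(\tau_k < t)$, I can pull $S_{t+1}$ and $|S_{t+1}| \vee 1$ outside the conditional expectation to obtain
\begin{equation*}
\lpcr_{t+1}(\TT) \;=\; \expe\!\left(\frac{\sum_{k \in S_{t+1}} \ind(\tau_k < t)}{|S_{t+1}| \vee 1} \,\middle|\, \mathcal{F}_t\right) \;=\; \frac{\sum_{k \in S_{t+1}} W_{k,t}}{|S_{t+1}| \vee 1}.
\end{equation*}
Hence the constraint $\lpcr_{t+1}(\TT) \leq \alpha$ a.s.\ is equivalent to $\sum_{k \in S_{t+1}} W_{k,t} \leq \alpha(|S_{t+1}| \vee 1)$ a.s. Because both sides of this inequality vanish when $S_{t+1} = \emptyset$, this is in turn equivalent to $\sum_{k \in S_{t+1}} W_{k,t} \leq \alpha |S_{t+1}|$, matching the second form stated in the lemma. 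The boundary case $t=0$ is vacuous: $W_{k,0} = \prob(\tau_k < 0 \mid \mathcal{F}_0) = 0$ since $\tau_k \geq 0$, while $\lpcr_1(\TT)$ is set to $0$ by convention.

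Next I will translate between the $\TT$-representation and the $\{S_t\}$-representation. Using the identification $S_{t+1} = \{k : T_k \geq t+1\} = \{k : T_k > t\}$, the indicator $\ind(k \in S_{t+1})$ equals $\ind(T_k > t)$, so $\sum_{k \in S_{t+1}} W_{k,t} = \sum_{k=1}^{K} \ind(T_k > t) W_{k,t}$ and $|S_{t+1}| = \sum_{k=1}^{K} \ind(T_k > t)$; this yields the first form \eqref{eq:constrain-equi}. The membership $\TT \in \adset$ is automatic whenever $\TT \in \adset_\alpha$, and the conditions $S_{t+1} \in \mathcal{F}_t$ and $S_{t+1} \subset S_t$ are precisely the defining properties of $\adset$. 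Conversely, from any index-set process $\{S_t\}$ satisfying the second form, one recovers $T_k = \sup\{t : k \in S_t\}$, and the observation $\{T_k = t\} \in \mathcal{F}_t$ noted in Section~\ref{subsec:procedure} ensures that measurability transfers correctly in both directions.

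I do not expect a substantive obstacle; the proof is essentially a bookkeeping exercise once one notices that $S_{t+1}$ can be pulled out of the $\mathcal{F}_t$-conditional expectation. The only minor care required is to verify that the $\vee 1$ clause is innocuous---which follows because the numerator $\sum_{k \in S_{t+1}} W_{k,t}$ also vanishes when $S_{t+1} = \emptyset$---and to confirm that passing between $\TT$ and $\{S_t\}$ preserves the information filtration, which follows from the inductive definition of $\mathcal{F}_t = \sigma(\mathcal{F}_{t-1}, S_t, X_{k,t}, k \in S_t)$.
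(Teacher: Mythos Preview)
Your proposal is correct and follows essentially the same approach as the paper's proof: both compute $\lpcr_{t+1}(\TT)$ by pulling the $\mathcal{F}_t$-measurable quantity $|S_{t+1}|\vee 1$ outside the conditional expectation to obtain $\lpcr_{t+1}(\TT)=\frac{\sum_{k\in S_{t+1}}W_{k,t}}{|S_{t+1}|\vee 1}$, and then observe that the resulting inequality is equivalent to $\sum_{k\in S_{t+1}}W_{k,t}\leq \alpha|S_{t+1}|$. Your version is in fact slightly more explicit than the paper's, spelling out the handling of the $\vee 1$ clause, the vacuous $t=0$ case, and the translation between the $\TT$- and $\{S_t\}$-representations.
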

The next lemma compares the second statement in the above lemma with the
output of the function $\Ho$.
\begin{restatable}{lemma}{lemmahodomin}\label{lemma:ho-domin}
		Let $\uu=(u_1,\cdots,u_m)\in\spaceu$ with $\card(\uu)=m\geq 1$. Let $k_1,\cdots, k_l\in\{1,\cdots,m\}$ be distinct and satisfy
		\begin{equation}
			\sum_{i=1}^l u_{k_i}\leq \alpha l.
		\end{equation}
		Then, $\Ho([\uu])\lleq [(u_{k_1},\cdots, u_{k_l})]$. Moreover, if $\Ho([\uu])=\ab$, then for any $S\subset \{1,\cdots,m\}$ with $|S|\geq 1$, $\sum_{i\in S} u_i>\alpha |S|$.
	\end{restatable}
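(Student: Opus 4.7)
\textbf{Proof plan for Lemma~\ref{lemma:ho-domin}.}
The plan is to reduce the lemma to two standard facts about order statistics of a subset versus the full sequence. Write $[\uu]=(u_{(1)},\dots,u_{(m)})$ with $u_{(1)}\leq\cdots\leq u_{(m)}$, set $n^\ast=\Io([\uu])$, and let $[(u_{k_1},\dots,u_{k_l})]=(v_1,\dots,v_l)$ with $v_1\leq\cdots\leq v_l$. By definition of $\Ho$ and $\lleq$, proving $\Ho([\uu])\lleq(v_1,\dots,v_l)$ amounts to the two claims (i) $n^\ast\geq l$ and (ii) $u_{(i)}\leq v_i$ for every $i=1,\dots,l$.

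For (i), I would use the trivial but crucial inequality $\sum_{i=1}^l u_{(i)}\leq \sum_{i=1}^l u_{k_i}$, which holds because the left side is the smallest possible sum of $l$ entries of $\uu$. Combining with the hypothesis $\sum_{i=1}^l u_{k_i}\leq \alpha l$ shows that $n=l$ satisfies the condition in the supremum defining $\Io([\uu])$ in \eqref{eq:io}, hence $n^\ast\geq l$. For (ii), I would argue by contradiction: if $v_i<u_{(i)}$ for some $i\leq l$, then the $i$ values $v_1,\dots,v_i$, all drawn from $\{u_1,\dots,u_m\}$, are strictly less than $u_{(i)}$, so the full sequence contains at least $i$ entries strictly below $u_{(i)}$, contradicting the definition of $u_{(i)}$ as the $i$-th order statistic. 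This gives $u_{(i)}\leq v_i$ for all $i\leq l$, and together with (i) yields the desired relation $\Ho([\uu])\lleq(v_1,\dots,v_l)$.

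For the ``moreover'' part, assume $\Ho([\uu])=\ab$, i.e.\ $\Io([\uu])=0$. By the definition \eqref{eq:io}, the supremum is not attained at any $n\in\{1,\dots,m\}$, so $\sum_{i=1}^{n} u_{(i)}>\alpha n$ for every $n\in\{1,\dots,m\}$. For any nonempty $S\subset\{1,\dots,m\}$ with $|S|=n$, the same ``smallest sum'' inequality gives $\sum_{i\in S} u_i\geq \sum_{i=1}^{n} u_{(i)}>\alpha n=\alpha|S|$, which is the claim.

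I do not expect a genuine obstacle here: the content is a routine order-statistic comparison, and the only care needed is to align the two notions of ``size'' in the partial order $\lleq$ (the dominating vector is the shorter one) with the fact that $\Ho([\uu])$ truncates at position $n^\ast$. The one place where I would be explicit is in verifying that $n^\ast\geq l$ suffices for the length condition in $\lleq$, since the first $l$ coordinates of $\Ho([\uu])=(u_{(1)},\dots,u_{(n^\ast)})$ are precisely $u_{(1)},\dots,u_{(l)}$, which are the quantities bounded above by $v_1,\dots,v_l$ in step~(ii).
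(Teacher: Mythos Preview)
Your proposal is correct and follows essentially the same approach as the paper: both establish $\Io([\uu])\geq l$ via the ``smallest sum'' inequality $\sum_{i=1}^l u_{(i)}\leq\sum_{i=1}^l u_{k_i}$, then compare coordinates using the standard order-statistic fact $u_{(i)}\leq v_i$. Your argument for the ``moreover'' part is actually slightly more direct than the paper's (you use the smallest-sum inequality again, whereas the paper extracts a single element $u_i\leq\alpha$ to force $\Io([\uu])\geq 1$), but this is a cosmetic difference, not a different route.
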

\begin{restatable}{lemma}{hmonotone}\label{lemma:h-monotone}
For any $\uu\lleq\vv\in\spaceo$,
		$\Ho(\uu)\lleq \Ho(\vv)$. That is, the mapping $\Ho(\uu)$ is increasing in $\uu$.
\end{restatable}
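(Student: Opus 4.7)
\textbf{Proof proposal for Lemma~\ref{lemma:h-monotone}.} Fix $\uu \lleq \vv$ in $\spaceo$. The plan is to reduce the claim $\Ho(\uu) \lleq \Ho(\vv)$ to a direct comparison of the integer-valued thresholds $\Io(\uu)$ and $\Io(\vv)$, plus a coordinate-wise inequality that is inherited from the hypothesis $\uu \lleq \vv$.

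First I would dispose of the edge cases. If $\vv = \ab$, then $\Ho(\vv) = \ab$ and the conclusion $\Ho(\uu) \lleq \ab$ holds by the definition of $\lleq$. If instead $\Io(\vv) = 0$, then again $\Ho(\vv) = \ab$ and the conclusion is automatic. So I may assume $\vv \neq \ab$ and $\Io(\vv) \geq 1$; write $m = \card(\vv)$, $n = \card(\uu)$, and note that $\uu \lleq \vv$ gives $n \geq m$ and $u_i \leq v_i$ for $i = 1, \ldots, m$.

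The heart of the argument is the inequality $\Io(\uu) \geq \Io(\vv)$. Let $k := \Io(\vv) \in \{1, \ldots, m\}$. By the definition of $\Io(\vv)$ in \eqref{eq:io} we have $\sum_{i=1}^{k} v_i \leq \alpha k$. Because $k \leq m$ and $\uu \lleq \vv$ gives $u_i \leq v_i$ for $i \leq m$, partial sums are monotone coordinate by coordinate:
\begin{equation*}
\sum_{i=1}^{k} u_i \;\leq\; \sum_{i=1}^{k} v_i \;\leq\; \alpha k.
\end{equation*}
Since $k \leq m \leq n = \card(\uu)$, the integer $k$ lies in the feasible range $\{0, \ldots, \card(\uu)\}$ for $\Io(\uu)$. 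Taking the supremum over feasible indices in the definition of $\Io(\uu)$ then yields $\Io(\uu) \geq k = \Io(\vv)$.

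Finally I would verify the two conditions required for $\Ho(\uu) \lleq \Ho(\vv)$. The first, $\card(\Ho(\uu)) = \Io(\uu) \geq \Io(\vv) = \card(\Ho(\vv))$, is exactly what was just shown. The second, $(\Ho(\uu))_i \leq (\Ho(\vv))_i$ for $i = 1, \ldots, \Io(\vv)$, is $u_i \leq v_i$ for such $i$, which follows from $\uu \lleq \vv$ since $\Io(\vv) \leq m = \card(\vv)$. I do not anticipate a real obstacle here; the lemma is essentially a bookkeeping consequence of the monotonicity of partial sums under coordinate-wise inequalities, combined with careful tracking of the length convention in $\spaceo$ where longer vectors are smaller in $\lleq$.
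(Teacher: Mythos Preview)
Your proof is correct and follows the same overall structure as the paper's: dispose of the trivial case $\vv=\ab$, establish $\Io(\uu)\geq \Io(\vv)$, and then read off the coordinate-wise inequality for $\Ho$. The one noteworthy difference is in how you prove $\Io(\uu)\geq \Io(\vv)$. The paper first establishes an auxiliary monotonicity fact---that $\sum_{i=1}^{L}v_i\leq \alpha L$ for \emph{every} $1\leq L\leq \Io(\vv)$---and then argues by contradiction, working at the index $\Io(\uu)+1$. You instead work directly at the index $k=\Io(\vv)$: since $k$ itself attains the supremum, $\sum_{i=1}^{k}v_i\leq \alpha k$ holds by definition, and then $\sum_{i=1}^{k}u_i\leq \alpha k$ follows immediately from $u_i\leq v_i$. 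Your route is shorter and avoids the auxiliary step entirely; the paper's intermediate fact (that the running averages $n^{-1}\sum_{i=1}^n v_i$ are nondecreasing in $n$) is true and of some independent interest, but is not needed for this lemma.
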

Next, we present several lemmas on the stochastic ordering of random variables and Markov chains. We start with a simple but useful result regarding the stochastic monotonicity of a likelihood ratio under a mixture model.
\begin{restatable}{lemma}{likelihoodratiomonotone}\label{lemma:likelihood-ratio-monotone}
Let $p(x)$ and $q(x)$ be two density functions with respect to some baseline measure $\mu$ and assume that $p(\cdot)$ and $q(\cdot)$ have the same support.
Let $L(x)=\frac{q(x)}{p(x)}$ be the likelihood ratio. For $\delta\in[0,1]$, let $Z_{\delta}$ be a random variable with the density function $\delta q+(1-\delta)p$ and $L_{\delta}=L(Z_{\delta})$.
Then, for $0\leq\delta_1<\delta_2\leq 1$, we have
\begin{equation}
	L_{\delta_1}\leq_{st} L_{\delta_2}.
\end{equation}
\end{restatable}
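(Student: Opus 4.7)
The plan is to reduce the statement to the single endpoint comparison $L_0 \leq_{st} L_1$, and then to verify that comparison via the monotone-likelihood-ratio property. The reduction rests on the fact that, for any bounded measurable function $g$,
\begin{equation*}
\expe[g(L_\delta)] = \int g(L(x))\bigl[\delta q(x) + (1-\delta)p(x)\bigr]\,d\mu(x) = \delta\,\expe[g(L_1)] + (1-\delta)\,\expe[g(L_0)],
\end{equation*}
so $\expe[g(L_\delta)]$ is affine in $\delta$. By the increasing-test-function characterization of stochastic order in Fact~\ref{fact:straseen-rv}, it therefore suffices to prove the endpoint comparison $\expe[g(L_1)] \geq \expe[g(L_0)]$ for every bounded increasing $g:\mathbb{R}\to\mathbb{R}$, which then upgrades to $L_{\delta_1}\leq_{st} L_{\delta_2}$ for any $\delta_1 < \delta_2$.

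I would verify the endpoint inequality by a direct change of measure. Since $p$ and $q$ share support and $q = L\cdot p$, one has
\begin{equation*}
\expe[g(L_1)] - \expe[g(L_0)] = \int g(L(x))\bigl(L(x) - 1\bigr)\,p(x)\,d\mu(x) = \expe\bigl[g(L_0)(L_0-1)\bigr].
\end{equation*}
For any increasing $g$, the pointwise inequality $g(\ell)(\ell-1) \geq g(1)(\ell-1)$ holds on all of $\mathbb{R}$: the two sides agree at $\ell=1$; for $\ell>1$ one has $g(\ell)\geq g(1)$ multiplied by the nonnegative factor $(\ell-1)$; for $\ell<1$ one has $g(\ell)\leq g(1)$ multiplied by the nonpositive factor $(\ell-1)$. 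Integrating against the law of $L_0$ gives
\begin{equation*}
\expe[g(L_0)(L_0-1)] \geq g(1)\cdot \expe[L_0 - 1] = g(1)\!\left(\int L(x)\,p(x)\,d\mu(x) - 1\right) = 0,
\end{equation*}
since $\int L\,p\,d\mu = \int q\,d\mu = 1$. Combined with the linearization step, this yields the claim.

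The argument is essentially routine and I do not anticipate a serious obstacle. The content is just the textbook observation that the likelihood-ratio statistic has the monotone-likelihood-ratio property $dP_{L_1}/dP_{L_0}(\ell) = \ell$, and any such statistic is stochastically ordered by a convex mixing parameter between its two endpoint laws. The only small point of care is handling both signs of $\ell - 1$ uniformly in the comparison of $g(\ell)$ with $g(1)$; the linearity in $\delta$ then packages the endpoint inequality into the full stochastic order over $\delta_1<\delta_2$.
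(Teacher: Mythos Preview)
The proposal is correct and takes essentially the same approach as the paper: both exploit the affine dependence of $\expe[g(L_\delta)]$ on $\delta$ to reduce to the endpoint comparison, then change measure via $q=Lp$ to write the endpoint difference as $\expe_{Z\sim p}[(L(Z)-1)g(L(Z))]$ and bound it below by $g(1)\expe_{Z\sim p}[L(Z)-1]=0$ using the pointwise inequality $(g(\ell)-g(1))(\ell-1)\geq 0$. The only cosmetic difference is that the paper computes $\expe g(L_{\delta_2})-\expe g(L_{\delta_1})=(\delta_2-\delta_1)\bigl[\expe_{Z\sim q}g(L(Z))-\expe_{Z\sim p}g(L(Z))\bigr]$ directly rather than first stating the affine reduction.
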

This result is  intuitive: if we have more weights in $q$ for the mixture distribution, then the likelihood ratio will be larger, giving more evidence in favor of $q$.
\begin{restatable}{lemma}{lemmadistv}\label{lemma:dist-v}
Assume model \mhomo{} holds.
Let $V_{k,t}=\prob\left(\tau_k<t\vert X_{k,1},\cdots,X_{k,t}\right)$. Then,
	\begin{equation}\label{eq:v-k-t}
	V_{k,0}=0 \text{ and } V_{k,t+1}=\frac{q(X_{k,t+1})/p(X_{k,t+1})}{(1-\theta)(1-V_{k,t})/(\theta + (1-\theta)V_{k,t})+q(X_{k,t+1})/p(X_{k,t+1})}.
\end{equation}
 Moreover, $\{V_{k,t}\}_{t=0,1,\cdots}$ are independent and identically distributed processes for different $k$.
\end{restatable}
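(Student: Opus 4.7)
}
The plan is to derive the recursion by a direct Bayes' rule argument that isolates the information in the new observation $X_{k,t+1}$, using the memoryless property of the geometric prior together with the fact that pre-change observations carry no information about the location of the change beyond "it has not yet occurred." The independence statement will then follow from the i.i.d.\ structure across streams in model \mhomo{}.

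First I would handle the initial condition: since $\tau_k\in\{0,1,\dots\}$, trivially $\prob(\tau_k<0)=0$, so $V_{k,0}=0$. For the recursion, the key intermediate identities I need are the predictive probabilities
\begin{equation}
\prob(\tau_k<t+1\mid X_{k,1:t})=(1-\theta)V_{k,t}+\theta,
\qquad
\prob(\tau_k\geq t+1\mid X_{k,1:t})=(1-\theta)(1-V_{k,t}).
\end{equation}
To get these I would decompose $\prob(\tau_k<t+1\mid X_{k,1:t})=V_{k,t}+\prob(\tau_k=t\mid X_{k,1:t})$, and then write $\prob(\tau_k=t\mid X_{k,1:t})=\prob(\tau_k=t\mid \tau_k\geq t, X_{k,1:t})\cdot\prob(\tau_k\geq t\mid X_{k,1:t})$. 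Here the first factor simplifies because on the event $\{\tau_k\geq t\}$ the observations $X_{k,1},\dots,X_{k,t}$ are i.i.d.\ $p(\cdot)$ regardless of the exact value of $\tau_k$, so the conditional distribution of $\tau_k$ given $\{\tau_k\geq t\}\cap\{X_{k,1:t}\}$ coincides with its unconditional distribution given $\{\tau_k\geq t\}$, which by the memoryless property puts mass $\theta$ on $t$. The second factor equals $1-V_{k,t}$ by definition.

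Next I would apply Bayes' rule to move from time $t$ to time $t+1$. Conditional on $X_{k,1:t}$ and on whether $\tau_k<t+1$ (post-change at time $t+1$) or $\tau_k\geq t+1$ (pre-change at time $t+1$), the density of $X_{k,t+1}$ is $q(\cdot)$ or $p(\cdot)$ respectively, by \eqref{eq:general-model} and the fact that $p_{k,t}=p$, $q_{k,t}=q$ under \mhomo{}. Therefore
\begin{equation}
V_{k,t+1}
=\frac{q(X_{k,t+1})\bigl[(1-\theta)V_{k,t}+\theta\bigr]}{q(X_{k,t+1})\bigl[(1-\theta)V_{k,t}+\theta\bigr]+p(X_{k,t+1})(1-\theta)(1-V_{k,t})}.
\end{equation}
Dividing numerator and denominator by $p(X_{k,t+1})\bigl[(1-\theta)V_{k,t}+\theta\bigr]$ yields exactly \eqref{eq:v-k-t}.

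Finally, for the independence and identical distribution across $k$, I would observe that under \mhomo{} the pairs $\bigl(\tau_k,(X_{k,t})_{t\geq 1}\bigr)$ are i.i.d.\ across $k$: the $\tau_k$'s are i.i.d.\ $\mathrm{Geom}(\theta)$, and conditional on $(\tau_1,\dots,\tau_K)$ the observations from different streams are independent with marginals determined only by the stream's own change point via $p$ and $q$. Since $V_{k,t}$ is, by its very definition and the recursion just derived, a measurable function of $(X_{k,1},\dots,X_{k,t})$ alone, the processes $\{V_{k,t}\}_{t\geq 0}$ for different $k$ are i.i.d. The main (modest) obstacle in all of this is being careful with the conditioning when invoking memorylessness; once the two predictive identities above are established cleanly, the rest is a one-line Bayes computation.
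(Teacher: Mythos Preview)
Your proof is correct, and it takes a genuinely different route from the paper's. The paper writes out the full posterior $\prob(\tau_1\le t-1\mid X_{1,1:t})$ as an explicit ratio of sums over candidate change times, introduces the auxiliary statistic $Q_{1,t}=\sum_{s=0}^{t-1}\theta(1-\theta)^s\prod_{r=s+1}^t q(X_{1,r})/p(X_{1,r})$, derives a multiplicative recursion for $Q_{1,t}$, and then simplifies algebraically. Your approach instead isolates the single conceptual step---the predictive identity $\prob(\tau_k<t+1\mid X_{k,1:t})=(1-\theta)V_{k,t}+\theta$, argued via memorylessness and the fact that pre-change data are uninformative about the exact future change time---and then does a one-line Bayes update. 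Incidentally, your intermediate predictive identity is exactly the content of the paper's subsequent Lemma~\ref{lemma:delta_t}, which the paper proves separately by the same explicit-sum method; so your argument effectively absorbs that lemma as well. The paper's approach is more self-contained computationally and avoids the need to justify the conditional-independence step carefully, while yours is shorter and makes transparent why the geometric prior is what drives the tractable recursion.
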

\begin{restatable}{lemma}{lemmadeltat}\label{lemma:delta_t}
Assume model \mhomo{} holds.
Let $\delta_{k,t} = \prob\left(\tau_k\leq t\vert  X_{k,1},\cdots,X_{k,t}\right)$, then
\begin{equation}
	\delta_{k,t}=\theta+(1-\theta)V_{k,t},
\end{equation}
where $V_{k,t}$ is defined in \eqref{eq:v-k-t}.
	\end{restatable}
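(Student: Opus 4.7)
The plan is to decompose the event $\{\tau_k \leq t\}$ into the disjoint union $\{\tau_k < t\} \cup \{\tau_k = t\}$ and compute each posterior probability separately. The first piece is $V_{k,t}$ by definition, so the real task is to show that $\prob(\tau_k = t \mid X_{k,1},\ldots,X_{k,t}) = \theta(1 - V_{k,t})$, after which we obtain $\delta_{k,t} = V_{k,t} + \theta(1 - V_{k,t}) = \theta + (1-\theta)V_{k,t}$.

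To evaluate $\prob(\tau_k = t \mid X_{k,1:t})$, I would first factor it through the event $\{\tau_k \geq t\}$: since $\{\tau_k = t\}\subset \{\tau_k\geq t\}$,
\begin{equation*}
\prob(\tau_k = t \mid X_{k,1:t}) = \prob(\tau_k \geq t \mid X_{k,1:t})\,\prob(\tau_k = t \mid \tau_k \geq t,\, X_{k,1:t}).
\end{equation*}
The first factor equals $1 - V_{k,t}$ by definition of $V_{k,t}$. For the second factor, the key observation is that under model \mhomo{}, if $\tau_k \geq t$ then $X_{k,1},\ldots,X_{k,t}$ are all drawn i.i.d.\ from $p$, so the conditional density of $(X_{k,1},\ldots,X_{k,t})$ given $\tau_k = m$ equals $\prod_{s=1}^t p(X_{k,s})$ for every $m \geq t$, independent of $m$. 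Hence $(X_{k,1:t})$ is conditionally independent of $\tau_k$ given $\tau_k \geq t$, which reduces the second factor to $\prob(\tau_k = t \mid \tau_k \geq t)$.

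The last step uses the memoryless property of the geometric prior from Example~\ref{example:mhomo}: since $\prob(\tau_k = m) = \theta(1-\theta)^m$, we have $\prob(\tau_k \geq t) = (1-\theta)^t$, and therefore $\prob(\tau_k = t \mid \tau_k \geq t) = \theta$. Combining the two factors gives $\prob(\tau_k = t \mid X_{k,1:t}) = \theta(1 - V_{k,t})$, and substituting back completes the proof.

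I do not anticipate any real obstacle; the argument is essentially a one-line decomposition plus the memoryless property, and no additional measurability subtleties arise because the conditioning sigma-field $\sigma(X_{k,1},\ldots,X_{k,t})$ involves only the stream-$k$ data, matching exactly the sigma-field in the definition of $V_{k,t}$.
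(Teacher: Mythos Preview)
Your argument is correct. The decomposition $\{\tau_k\le t\}=\{\tau_k<t\}\cup\{\tau_k=t\}$ together with the conditional-independence observation (that $X_{k,1:t}$ carries no information about the specific value of $\tau_k$ once $\tau_k\ge t$ is known) and the memoryless identity $\prob(\tau_k=t\mid\tau_k\ge t)=\theta$ gives exactly $\delta_{k,t}=V_{k,t}+\theta(1-V_{k,t})$.

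The paper takes a different route: it writes out $\delta_{k,t}$ directly via the Bayes formula using the likelihood-ratio quantity $Q_{1,t}=\sum_{s=0}^{t-1}\theta(1-\theta)^s L_{1,(s+1):t}$ introduced in the proof of the preceding lemma, obtains $\delta_{k,t}=\dfrac{Q_{1,t}+\theta(1-\theta)^t}{Q_{1,t}+(1-\theta)^t}$, and then simplifies algebraically to $V_{1,t}+\theta(1-V_{1,t})$. Your proof is more conceptual and avoids any explicit representation of $V_{k,t}$; the paper's is a direct computation that reuses notation already set up. Both are short, but yours makes the role of the geometric prior's memorylessness explicit, whereas the paper's approach would extend more mechanically to non-geometric priors (at the cost of a different final formula).
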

\begin{restatable}{lemma}{lemmamonotonev}\label{lemma:monotone-v}
	Under model \mhomo{}, the process $\{V_{1,t}\}_{t\geq 0}$ defined in \eqref{eq:v-k-t} is a homogeneous Markov chain. In addition, its transition kernel is stochastically monotone. We will later refer to this transition kernel as $K(\cdot,\cdot)$.
\end{restatable}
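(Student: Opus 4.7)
The plan is to address the two claims in the lemma separately. For the Markov and homogeneity claim, I would isolate the update rule \eqref{eq:v-k-t} as a deterministic recursion driven by the likelihood ratio innovation. For the stochastic monotonicity claim, I would combine joint monotonicity of that deterministic update with the stochastic dominance of the innovation distribution supplied by Lemma~\ref{lemma:likelihood-ratio-monotone}, and finish with a Strassen-type coupling via Fact~\ref{fact:straseen-rv}.

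More concretely, rewrite \eqref{eq:v-k-t} as $V_{1,t+1} = g(V_{1,t}, L_{t+1})$ with $L_{t+1} := q(X_{1,t+1})/p(X_{1,t+1})$ and $g(v,\ell) := \ell / \bigl( a(v) + \ell \bigr)$, where $a(v) := (1-\theta)(1-v)/(\theta + (1-\theta)v)$. The key observation for the Markov property is that, conditional on $\sigma(X_{1,1},\ldots,X_{1,t})$, the law of $X_{1,t+1}$ is the mixture $(1-\delta_{1,t})p + \delta_{1,t} q$; this is immediate from the definition of $\delta_{1,t}$ and the conditional independence structure in \eqref{eq:general-model}. By Lemma~\ref{lemma:delta_t}, $\delta_{1,t} = \theta + (1-\theta)V_{1,t}$ is a function of $V_{1,t}$ only, so the conditional law of $L_{t+1}$ depends on the past only through $V_{1,t}$. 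Combined with the deterministic map $g$, which does not depend on $t$, this gives the homogeneous Markov property.

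For stochastic monotonicity, first verify that $g$ is non-decreasing in each argument: in $\ell$ because $\partial g/\partial \ell = a(v)/(a(v)+\ell)^2 \geq 0$, and in $v$ because $a(v)$ is non-increasing on $[0,1]$ while $g$ is non-increasing in $a$. Now fix $v \leq v'$ and set $\delta(v) := \theta + (1-\theta)v$, so that $\delta(v) \leq \delta(v')$. The conditional law of $L_{t+1}$ under $K(v,\cdot)$ matches the law of $L_{\delta(v)}$ from Lemma~\ref{lemma:likelihood-ratio-monotone}, and likewise under $K(v',\cdot)$ for $\delta(v')$. That lemma gives $L_{\delta(v)} \leq_{st} L_{\delta(v')}$, and Fact~\ref{fact:straseen-rv} provides a coupling $(\hat L, \hat L')$ with the correct marginals and $\hat L \leq \hat L'$ a.s. Setting $\hat V := g(v,\hat L)$ and $\hat V' := g(v',\hat L')$ gives $\hat V \leq \hat V'$ a.s. by the joint monotonicity of $g$, and $(\hat V, \hat V')$ is a coupling of $K(v,\cdot)$ and $K(v',\cdot)$. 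A second application of Fact~\ref{fact:straseen-rv} yields $K(v,\cdot) \leq_{st} K(v',\cdot)$, which is precisely stochastic monotonicity in the sense of Definition~\ref{def:kernel}.

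I do not anticipate a substantial obstacle: the proof is a clean reduction to the one-dimensional stochastic dominance in Lemma~\ref{lemma:likelihood-ratio-monotone} together with an elementary monotonicity check for $g$. The step that most warrants care is identifying the conditional law of $X_{1,t+1}$ as the mixture $(1-\delta_{1,t})p + \delta_{1,t}q$, since this is where the posterior interpretation of $\delta_{1,t}$ and the conditional independence of the observations across time interact; once that identification is in hand, everything else is routine coupling.
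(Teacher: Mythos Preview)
Your proposal is correct and follows essentially the same route as the paper: identify the conditional law of $X_{1,t+1}$ as the mixture $(1-\delta_{1,t})p+\delta_{1,t}q$ with $\delta_{1,t}=\theta+(1-\theta)V_{1,t}$ to get the homogeneous Markov property, then couple the likelihood ratios via Lemma~\ref{lemma:likelihood-ratio-monotone} and Fact~\ref{fact:straseen-rv} and use monotonicity of the update map in both arguments. The only cosmetic difference is that you package the two monotonicity steps as ``joint monotonicity of $g$'' whereas the paper chains the inequalities $\hat V\le \hat L'/(\hat L'+a(v))\le \hat L'/(\hat L'+a(v'))=\hat V'$ explicitly.
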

\begin{restatable}{lemma}{lemmakka}\label{lemma:kka}
For any $t\geq 1$ and $\TT^{\strA}$, $[W^{\strA}_{S^{\strA}_{t+1},t+1}]$ is conditionally independent of $\fil^{\strA}_{t}$ given $[W^{\strA}_{S^{\strA}_{t+1},t}]$.
Moreover, the conditional density of $\big[W^{\strA}_{S^{\strA}_{t+1},t+1}\big]$ at $\vv$ given $\big[W^{\strA}_{S^{\strA}_{t+1},t}\big]=\uu\in\spaceo$ is
\begin{equation}
	\KKa(\uu,\vv)
	:=
	\begin{cases}
		\sum_{\pi \in \PP_{\card(\uu)}}\prod_{l=1}^{\card(\uu)}K(u_l,v_{\pi(l)}) &\text{ if } \card(\uu)=\card(\vv)\geq 1,\\
		1 & \text{ if } \card(\uu)=\card(\vv)=0,\\
		0 &\text{ otherwise,}
	\end{cases}
\end{equation}
where $\PP_{m}$ denotes the set of all permutations over $\{1,\cdots, m\}$.
\end{restatable}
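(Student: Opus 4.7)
The plan is to first reduce the posterior $W^{\strA}_{k,t}$ for active streams to the stream-specific posterior $V_{k,t}$ from Lemma~\ref{lemma:dist-v}, then exploit the independent Markov structure of the $V$-processes across streams, and finally push forward to the order statistic.

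First, I would show that $W^{\strA}_{k,t} = V_{k,t}$ for every $k \in S^{\strA}_{t+1}$. Under model {\mhomo}, the change points $\tau_1, \ldots, \tau_K$ are independent a priori and, given the $\tau_k$'s, the observations $X_{k,s}$ factor into independent blocks indexed by $k$. Since every decision $S^{\strA}_s$ is $\fil^{\strA}_{s-1}$-measurable, the sigma-field $\fil^{\strA}_t$ is generated by the observed data $\{X_{k,s}: s \leq t \wedge T^{\strA}_k\}$; the decision sets add no independent randomness. Consequently the joint likelihood factors across streams, so the posterior over $(\tau_1, \ldots, \tau_K)$ given $\fil^{\strA}_t$ also factors. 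For $k \in S^{\strA}_{t+1}$ we have $T^{\strA}_k > t$, so the data available for stream $k$ are $X_{k,1}, \ldots, X_{k,t}$, giving
\begin{equation*}
  W^{\strA}_{k,t} = \prob(\tau_k < t \mid \fil^{\strA}_t) = \prob(\tau_k < t \mid X_{k,1},\ldots,X_{k,t}) = V_{k,t}.
\end{equation*}

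Next, by Lemma~\ref{lemma:dist-v} the processes $\{V_{k,t}\}_{t \geq 0}$ are i.i.d.\ across $k$, and by Lemma~\ref{lemma:monotone-v} each is a homogeneous Markov chain with transition kernel $K(\cdot,\cdot)$. Combined with the $\fil^{\strA}_t$-measurability of $S^{\strA}_{t+1}$, this yields that conditional on $\fil^{\strA}_t$ the values $(W^{\strA}_{k,t+1})_{k \in S^{\strA}_{t+1}}$ are independent, with $W^{\strA}_{k,t+1}$ having conditional density $K(W^{\strA}_{k,t}, \cdot)$. Enumerating $S^{\strA}_{t+1} = \{k_1, \ldots, k_m\}$ and applying the standard formula for the joint density of the order statistic of independent random variables with possibly different densities, I obtain, for $\vv \in \spaceo$ with $v_1 < \cdots < v_m$,
\begin{equation*}
  f_{[W^{\strA}_{S^{\strA}_{t+1},t+1}]}(\vv \mid \fil^{\strA}_t) = \sum_{\pi \in \PP_m} \prod_{l=1}^m K(W^{\strA}_{k_l,t}, v_{\pi(l)}).
\end{equation*}
Because this sum is symmetric in $(W^{\strA}_{k_l,t})_{l=1}^m$, it depends on $\fil^{\strA}_t$ only through the order statistic $\uu = [W^{\strA}_{S^{\strA}_{t+1},t}]$. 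This recovers the formula $\KKa(\uu, \vv)$ and simultaneously establishes the conditional independence claim, with the cases $\card(\uu) = \card(\vv) = 0$ and $\card(\uu) \neq \card(\vv)$ handled by observing that $|S^{\strA}_{t+1}|$ is the common length of both order statistics.

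I expect the main obstacle to be the first step: to justify that for a stream still active at time $t$ its posterior depends only on its own observations, one has to argue that neither the data from other streams nor the (data-driven) decision history carries extra information about $\tau_k$. This relies delicately on both the a priori independence of the $\tau_k$'s and the stream-wise factorization of the data likelihood under model {\mhomo}; everything else is a mechanical application of the Markov structure of the $V$-processes and the push-forward to order statistics.
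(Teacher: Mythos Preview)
Your proposal is correct and follows essentially the same approach as the paper's proof: reduce $W^{\strA}_{k,t}$ to the stream-specific posterior $V_{k,t}$ for active streams under model {\mhomo}, invoke the Markov kernel $K$ from Lemma~\ref{lemma:monotone-v} together with independence across streams to obtain the product conditional density, then push forward to the order statistic and observe that the permutation sum depends on the conditioning variables only through $[\uu]$. The paper organizes the argument slightly differently---it conditions on the full realized history first and then asserts $W^{\strA}_{k,t+1}=V_{k,t+1}$ and conditional independence of $\fil^{\strA}_t$---but the substance is identical, and your explicit identification $W^{\strA}_{k,t}=V_{k,t}$ up front arguably makes the later steps more transparent.
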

\begin{restatable}{lemma}{lemmasomarkovgenerationA}\label{lemma:so-markov-generation-A}
	For each $\uu\in\spaceo$ with $\card(\uu)=m\geq 1$, generate an $\spaceo$-valued random variable $V$ as follows,
		\begin{enumerate}
			\item For each $k\in\{1,\cdots,m\}$, generate $Z_k\sim K(u_k, \cdot)$ independently for different $k$.
			\item Let $V=[(Z_1,...,Z_{m})]$.
		\end{enumerate}
		In addition, if $m=0$, we let $V=\ab$.
	Then, $V\sim \KKa(\uu,\cdot)$.
	\end{restatable}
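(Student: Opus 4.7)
The plan is to verify the claim by a direct order-statistic computation, showing that sorting an independent collection of continuous random variables produces a density that is the symmetrization of the joint density.

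First, I would dispose of the boundary case $\card(\uu)=0$ separately: by construction $V=\ab$, while $\KKa(\uu,\cdot)$ is supported at $\ab$ with unit mass, so the claim is immediate.

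For $m\geq 1$, since the $Z_k$'s are independent with $Z_k\sim K(u_k,\cdot)$, the unsorted vector $(Z_1,\ldots,Z_m)$ has joint density $(z_1,\ldots,z_m)\mapsto \prod_{k=1}^m K(u_k,z_k)$ with respect to Lebesgue measure on $[0,1]^m$. Using that $K(u,\cdot)$ is a continuous density (which follows from the structure of the posterior-probability transition kernel in Lemma~\ref{lemma:monotone-v} under Assumption A1), the tie set $\{z: z_i=z_j \text{ for some } i\neq j\}$ is Lebesgue-null and may be discarded. Off this null set, the sorting map $\Phi:(z_1,\ldots,z_m)\mapsto (z_{(1)},\ldots,z_{(m)})$ is an $m!$-to-one measure-preserving map onto the ordered simplex $\Delta_m=\{\vv\in[0,1]^m: v_1\leq \cdots\leq v_m\}$, with preimage $\Phi^{-1}(\vv)=\{(v_{\pi(1)},\ldots,v_{\pi(m)}):\pi\in\PP_m\}$ whenever the coordinates of $\vv$ are distinct.

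Applying the change-of-variables formula (equivalently, partitioning $[0,1]^m$ into the regions $\{z_{\pi(1)}<\cdots<z_{\pi(m)}\}$ indexed by $\pi\in\PP_m$ and summing the joint density contributions), the density of $V=\Phi(Z_1,\ldots,Z_m)$ at $\vv\in\Delta_m$ is
\[
\sum_{\pi\in\PP_m}\prod_{k=1}^m K(u_k,v_{\pi(k)}),
\]
which is exactly the length-$m$ branch of $\KKa(\uu,\vv)$. For $\vv\in\spaceo$ with $\card(\vv)\neq m$, both the density of $V$ and $\KKa(\uu,\vv)$ vanish by construction, so equality holds on all of $\spaceo$. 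I do not anticipate any significant obstacle; the only bookkeeping is to pin down the reference measure on $\spaceo$ as Lebesgue measure on each fixed-length stratum identified with $\Delta_m$, and to observe that the mass of $V$ is supported entirely on the stratum of length $m=\card(\uu)$, so the multi-length structure of $\spaceo$ plays no role in the computation.
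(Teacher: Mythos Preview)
Your proposal is correct and follows essentially the same route as the paper: write the joint density of the independent $(Z_1,\ldots,Z_m)$ as $\prod_k K(u_k,z_k)$, then pass to the order statistic to obtain the symmetrized density $\sum_{\pi\in\PP_m}\prod_k K(u_k,v_{\pi(k)})=\KKa(\uu,\vv)$. The paper's proof is in fact terser than yours---it simply asserts the order-statistic density formula without the supporting remarks on the null tie set or the stratified reference measure on $\spaceo$---so your version is, if anything, a slightly more explicit rendering of the same argument.
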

\begin{restatable}{lemma}{lemmamonotonekka}\label{lemma:monotone-kka}
	For $\uu,\uu'\in\spaceo$ with $\uu\lleq \uu'$, we have $\KKa(\uu,\cdot)\lleq_{st}\KKa(\uu',\cdot)$.
\end{restatable}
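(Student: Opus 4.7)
The plan is to verify the stochastic ordering via Strassen's theorem for a polish pospace (Fact~\ref{fact:strassen-pospace}): I will construct an explicit coupling $(\hat V,\hat V')$ with $\hat V\sim\KKa(\uu,\cdot)$, $\hat V'\sim\KKa(\uu',\cdot)$, and $\hat V\lleq\hat V'$ almost surely. The construction uses Lemma~\ref{lemma:so-markov-generation-A}, which lets me realize these random variables as sorted tuples of independent samples from the one-dimensional kernel $K$, and then uses Lemma~\ref{lemma:monotone-v}, which gives stochastic monotonicity of $K$.

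Set $m=\card(\uu)$ and $m'=\card(\uu')$. The assumption $\uu\lleq \uu'$ means $m\geq m'$ and $u_i\leq u'_i$ for $i=1,\dots,m'$. First handle the degenerate case $\uu'=\ab$ (i.e., $m'=0$): then $\KKa(\uu',\cdot)=\delta_{\ab}$ and any $\spaceo$-valued $\hat V$ satisfies $\hat V\lleq \ab$, so the coupling is trivial. In the main case $m'\geq 1$, I invoke stochastic monotonicity of $K$: for each $i=1,\dots,m'$, Strassen's theorem for real-valued random variables (Fact~\ref{fact:straseen-rv}) produces a coupling $(Z_i,Z'_i)$ with $Z_i\sim K(u_i,\cdot)$, $Z'_i\sim K(u'_i,\cdot)$, and $Z_i\leq Z'_i$ a.s. Draw these couplings independently across $i=1,\dots,m'$, and independently draw $Z_i\sim K(u_i,\cdot)$ for $i=m'+1,\dots,m$. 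By Lemma~\ref{lemma:so-markov-generation-A}, setting $\hat V=[(Z_1,\dots,Z_m)]$ and $\hat V'=[(Z'_1,\dots,Z'_{m'})]$ yields the desired marginal distributions.

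It remains to show $\hat V\lleq \hat V'$ a.s., i.e., the $i$-th order statistic of $(Z_1,\dots,Z_m)$ is at most the $i$-th order statistic of $(Z'_1,\dots,Z'_{m'})$ for $i=1,\dots,m'$. Fix $i\leq m'$ and let $Z'_{(i)}$ denote the $i$-th smallest entry of $(Z'_1,\dots,Z'_{m'})$. There exist at least $i$ indices $j\in\{1,\dots,m'\}$ with $Z'_j\leq Z'_{(i)}$; for each such $j$ we have $Z_j\leq Z'_j\leq Z'_{(i)}$ by construction. Hence among the $m$ values $(Z_1,\dots,Z_m)$ there are at least $i$ entries no larger than $Z'_{(i)}$, so $Z_{(i)}\leq Z'_{(i)}$. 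This inequality is pathwise, so $\hat V\lleq\hat V'$ a.s., and Strassen's theorem then delivers $\KKa(\uu,\cdot)\lleq_{st}\KKa(\uu',\cdot)$.

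I do not foresee a serious obstacle here; the only subtle point is the pathwise comparison of order statistics from two vectors of different lengths, but it reduces to the elementary counting argument above once the coordinate-wise coupling is in place. The lemma is essentially a packaging of the one-dimensional monotonicity of $K$ through the sort-and-append representation of $\KKa$.
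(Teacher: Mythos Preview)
Your proposal is correct and follows essentially the same approach as the paper: construct coordinatewise couplings $(Z_i,Z'_i)$ for $i\leq m'$ via Strassen and the monotonicity of $K$ (Lemma~\ref{lemma:monotone-v}), fill in the extra $Z_i$ for $m'<i\leq m$ independently, and then apply Lemma~\ref{lemma:so-markov-generation-A} to identify the sorted tuples as samples from $\KKa(\uu,\cdot)$ and $\KKa(\uu',\cdot)$. The paper asserts $[Z]\lleq[Z']$ without further comment; your explicit counting argument for the order-statistic comparison is a welcome clarification of that step.
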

\begin{restatable}{lemma}{lemmakko}\label{lemma:kko}
For any $t,s\geq 0$ and $\TT^{\strA}$, $\big[W^{\strGfull{t}}_{S^{\strGfull{t}}_{t+s+1},t+s+1}\big]$ is conditionally independent of $\fil^{\strGfull{t}}_{t+s}$ given $\big[W^{\strGfull{t}}_{S^{\strGfull{t}}_{t+s},t+s}\big]$.
Moreover, the conditional density of $\big[W^{\strGfull{t}}_{S^{\strGfull{t}}_{t+s+1},t+s+1}\big]$ at $\vv$ given $\big[W^{\strGfull{t}}_{S^{\strGfull{t}}_{t+s},t+s}\big]=\uu\in\spaceo$ is
\begin{equation}
	\KKo(\uu,\vv)
	:= \KKa(\Ho(\uu),\vv)=
	\begin{cases}
		\sum_{\pi \in \PP_{\Io(\uu)}}\prod_{l=1}^{\Io(\uu)}K(\Ho(\uu)_l,v_{\pi(l)}) &\text{ if } \card(\vv)=\Io(\uu)\geq 1\\
		1 &\text{ if } \card(\vv)=\Io(\uu)=0\\
		0 &\text{ otherwise,}
	\end{cases}
\end{equation}
where $\PP_{m}$ denotes the set of all permutations over $\{1,\cdots, m\}$.
\end{restatable}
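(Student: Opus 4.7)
The plan is to reduce the lemma to two earlier results: Lemma~\ref{lemma:kka}, which computes the one-step transition kernel of the sorted posterior probabilities for an arbitrary procedure, and Lemma~\ref{lemma:one-step-ho}, which identifies the action of Algorithm~\ref{alg:one-step-rule1} with the deterministic map $\Ho$. Because $\TT^{\strGfull{t}}$ applies Algorithm~\ref{alg:one-step-rule1} at every time from $t+1$ onward, the one-step evolution of the sorted posterior probabilities under $\TT^{\strGfull{t}}$ factors cleanly into a deterministic deactivation step governed by $\Ho$, followed by the generic Bayesian update governed by $\KKa$, and the identity $\KKo = \KKa\circ\Ho$ will emerge directly from this decomposition.

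The execution proceeds in two steps. First, I would invoke Lemma~\ref{lemma:kka} with the arbitrary procedure there instantiated as $\TT^{\strGfull{t}}$ and with time index $t+s$, obtaining that, conditional on $\fil^{\strGfull{t}}_{t+s}$, the vector $[W^{\strGfull{t}}_{S^{\strGfull{t}}_{t+s+1}, t+s+1}]$ has transition density $\KKa([W^{\strGfull{t}}_{S^{\strGfull{t}}_{t+s+1}, t+s}], \cdot)$ and is conditionally independent of $\fil^{\strGfull{t}}_{t+s}$ given $[W^{\strGfull{t}}_{S^{\strGfull{t}}_{t+s+1}, t+s}]$. Second, since $\TT^{\strGfull{t}}$ produces $S^{\strGfull{t}}_{t+s+1}$ by feeding $(W^{\strGfull{t}}_{k,t+s})_{k\in S^{\strGfull{t}}_{t+s}}$ into Algorithm~\ref{alg:one-step-rule1}, Lemma~\ref{lemma:one-step-ho} yields the deterministic identity
\begin{equation*}
	[W^{\strGfull{t}}_{S^{\strGfull{t}}_{t+s+1}, t+s}] = \Ho([W^{\strGfull{t}}_{S^{\strGfull{t}}_{t+s}, t+s}]).
\end{equation*}
Chaining the two observations, the conditioning variable of the first step is a measurable function of $[W^{\strGfull{t}}_{S^{\strGfull{t}}_{t+s}, t+s}]$, and a standard tower argument then shows that the conditional expectation of any bounded function of $[W^{\strGfull{t}}_{S^{\strGfull{t}}_{t+s+1}, t+s+1}]$ given $\fil^{\strGfull{t}}_{t+s}$ is already a function of $[W^{\strGfull{t}}_{S^{\strGfull{t}}_{t+s}, t+s}]$ alone. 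This simultaneously upgrades the conditional independence to the claimed form and identifies the conditional density as $\KKa(\Ho(\uu), \vv) = \KKo(\uu, \vv)$ when $[W^{\strGfull{t}}_{S^{\strGfull{t}}_{t+s}, t+s}] = \uu$; unfolding $\KKa$ with its explicit formula from Lemma~\ref{lemma:kka} yields the displayed three-case expression.

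I do not anticipate a substantive obstacle, as the heavy lifting has already been done in Lemmas~\ref{lemma:kka} and \ref{lemma:one-step-ho}. The only care needed concerns the degenerate case $\Io(\uu) = 0$, where $\Ho(\uu) = \ab$ and the active set becomes empty: here one verifies directly that the $\card(\vv) = \Io(\uu) = 0$ branch of the displayed formula correctly produces a point mass at $\ab$, consistent with the convention $\KKa(\ab, \ab) = 1$ inherited from Lemma~\ref{lemma:kka} and with the fact that an empty active set must stay empty at the next time step.
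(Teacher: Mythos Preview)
Your proposal is correct and follows essentially the same route as the paper: apply Lemma~\ref{lemma:kka} with $\TT^{\strA}$ replaced by $\TT^{\strGfull{t}}$ and time $t$ replaced by $t+s$, then use Lemma~\ref{lemma:one-step-ho} to identify $[W^{\strGfull{t}}_{S^{\strGfull{t}}_{t+s+1},t+s}]$ with $\Ho([W^{\strGfull{t}}_{S^{\strGfull{t}}_{t+s},t+s}])$, and conclude. The paper's proof is slightly terser about the tower step and the degenerate case, but the logic is the same.
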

\begin{remark}
There is a key difference between Lemma~\ref{lemma:kka} and Lemma~\ref{lemma:kko}, though they may look similar at a first glance.
In Lemma~\ref{lemma:kka}, we consider the conditional distribution of $\big[W^{\strA}_{S^{\strA}_{t+1},t+1}\big]$ given $\big[W^{\strA}_{S^{\strA}_{t+1},t}\big]$, where the index set $S_{t+1}^{\strA}$ is the same for the two random vectors.
In Lemma~\ref{lemma:kko}, we consider the conditional distribution of $\big[W^{\strGfull{t}}_{S^{\strGfull{t}}_{t+s+1},t+s+1}\big]$ given
$\big[W^{\strGfull{t}}_{S^{\strGfull{t}}_{t+s},t+s}\big]$, where the two random vectors are associated with two different index sets
$S^{\strGfull{t}}_{t+s+1}$ and $S^{\strGfull{t}}_{t+s}$.
 This difference reflects a key difference between the proposed one-step update rule and an arbitrary procedure.
\end{remark}
\begin{restatable}{lemma}{lemmasomarkovgeneration}\label{lemma:so-markov-generation}
	For each $\uu\in\spaceo$ and $m=\Io(\uu)$, generate an $\spaceo$-valued random variable $V$ as follows,
		\begin{enumerate}
			\item For each $k\in\{1,\cdots,m\}$, generate $Z_k\sim K(\Ho(\uu)_k, \cdot)$ independently for different $k$.
			\item Let $V=[(Z_1,...,Z_{m})]$.
		\end{enumerate}
		In addition, if $m=0$, we let $V=\ab$.
	Then, $V\sim \KKo(\uu,\cdot)$.
\end{restatable}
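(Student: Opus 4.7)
The plan is to reduce Lemma~\ref{lemma:so-markov-generation} directly to Lemma~\ref{lemma:so-markov-generation-A} via the identity $\KKo(\uu,\cdot)=\KKa(\Ho(\uu),\cdot)$ established in Lemma~\ref{lemma:kko}. Since $\KKa$ is the one-step transition kernel when no deactivation happens and $\KKo$ corresponds to first applying the one-step update rule (which, by Lemma~\ref{lemma:one-step-ho}, shrinks the sorted posterior vector to $\Ho(\uu)$) and then performing a $\KKa$-type transition on the surviving coordinates, this identity is exactly what makes the generation recipe transport correctly.

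First I would split into the two cases dictated by the statement. When $m = \Io(\uu) \geq 1$, the vector $\Ho(\uu) = (u_1,\dots,u_m) \in \spaceo$ has length $m \geq 1$. Applying Lemma~\ref{lemma:so-markov-generation-A} with input $\Ho(\uu)$ in place of $\uu$, the random variable $V = [(Z_1,\dots,Z_m)]$ obtained by drawing $Z_k \sim K(\Ho(\uu)_k,\cdot)$ independently satisfies $V \sim \KKa(\Ho(\uu),\cdot)$. By Lemma~\ref{lemma:kko}, this density equals $\KKo(\uu,\cdot)$, which is precisely the claim.

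Next, when $m = \Io(\uu) = 0$, the recipe sets $V = \ab$ deterministically. On the other hand, inspecting the definition of $\KKo$ in Lemma~\ref{lemma:kko}, the case $\card(\vv) = \Io(\uu) = 0$ gives $\KKo(\uu,\vv) = 1$ at $\vv = \ab$ and $0$ otherwise, so $\KKo(\uu,\cdot)$ is the point mass at $\ab$. Hence $V \sim \KKo(\uu,\cdot)$ in this boundary case as well.

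I do not anticipate any substantive obstacle: the real content has already been absorbed into Lemmas~\ref{lemma:kko} and \ref{lemma:so-markov-generation-A}, and this lemma is essentially the composition of those two facts plus a trivial verification of the degenerate case $m=0$. The only bookkeeping point to be careful about is that $\Ho(\uu)$ always lies in $\spaceo$ (its entries are in the sorted order inherited from $\uu$), which is needed so that Lemma~\ref{lemma:so-markov-generation-A} applies with input $\Ho(\uu)$; this is immediate from the definition of $\Ho$.
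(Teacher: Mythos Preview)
Your proposal is correct and takes essentially the same approach as the paper: the paper's proof is the one-liner ``a direct application of Lemma~\ref{lemma:so-markov-generation-A} and $\KKo(\uu,\vv)=\KKa(\Ho(\uu),\vv)$,'' which is exactly the reduction you describe. Your explicit treatment of the $m=0$ boundary case is a welcome bit of care but not strictly needed, since Lemma~\ref{lemma:so-markov-generation-A} already covers $m=0$.
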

\subsection{Proof of Propositions~\ref{prop:CouplingForProp} and \ref{prop:monotone-o}}\label{sec:main-prop-proof}

\begin{customprop}{B.1}
Let $\{x_t,s_t,1\leq t\leq t_0\}$ be any sequence in the support of the stochastic process $\big\{(X_{k,t})_{k\in S^{\strA}_{t}},S^{\strA}_{t}, 1\leq t\leq t_0\big\}$
following a sequential procedure $\TT^{\strA}\in\adset_{\alpha}$. Then, there exists a coupling of $\spaceo$-valued random variables $(\hat{W},\hat{W}')$ such that
\begin{equation*}
\begin{split}
		\hat{W}&\eqd \left.\left[ \left(W^{\strGfull{t_0}}_{k,{t_0+1}}\right)_{k\in S^{\strGfull{t_0}}_{t_0+1}}\right]\middle| \left\{\left(X_{k,t}\right)_{k\in S^{\strA}_{t}}=x_t,S^{\strA}_{t}=s_t, 1\leq t\leq t_0\right\}\right.,\\
	\hat{W}'&\eqd \left.\left[\left(W_{k,{t_0+1}}^{\strGfull{t_0+1} }\right)_{k\in S_{t_0+1}^{\strGfull{t_0+1}}}\right]\middle| \left\{\left(X_{k,t}\right)_{k\in S^{\strA}_{t}}=x_t,S^{\strA}_{t}=s_t, 1\leq t\leq t_0\right\}\right.,
\end{split}
\end{equation*}
and	$\hat{W}\lleq \hat{W}'$ a.s., {where $\eqd$ denotes that random variables on both sides are identically distributed.}
\end{customprop}

\begin{proof}[Proof of Proposition~\ref{prop:CouplingForProp}]
	First,  given $\big\{X_{S^{\strA}_{t}}=x_t,S^{\strA}_{t}=s_t, 1\leq t\leq t_0\big\}$, $\big[W^{\strA}_{S^{\strA}_{t_0},t_0}\big]$ is determined. To simplify the notation, we assume $W^{\strA}_{S^{\strA}_{t_0},t_0}=\ww_{t_0}\in \spaceu$ given $\big\{X_{S^{\strA}_{t}}=x_t,S^{\strA}_{t}=s_t, 1\leq t\leq t_0\big\}$.

	In addition, $\big[W^{\strA}_{S^{\strA}_{t_0+1},t_0}\big]$  is determined by $\big\{X_{S^{\strA}_{t}}=x_t,S^{\strA}_{t}=s_t, 1\leq t\leq t_0\big\}$ and the sequential procedure $\TT^{\strA}$. To simplify the notation, we assume $\big[W^{\strA}_{S^{\strA}_{t_0+1},t_0}\big]=\ww_{t_0+1}^*\in \spaceo$ given $\{X_{S^{\strA}_{t}}=x_t,S^{\strA}_{t}=s_t, 1\leq t\leq t_0\}$. We clarify that $\ww_{t_0+1}^*$ is a deterministic (and measurable) function of $x_t,s_t$ for $1\leq t\leq t_0$ (depending on the sequential procedure $\TT^{\strA}$).	According to Lemma~\ref{lemma:kka} (replacing $t$ by $t_0$), the conditional distribution of
	$\big[W^{\strA}_{S^{\strA}_{t_0+1},{t_0+1}}\big]$ given $\big\{X_{S^{\strA}_{t}}=x_t,S^{\strA}_{t}=s_t, 1\leq t\leq t_0\big\}$ is the same as the conditional distribution given $\big[W^{\strA}_{S^{\strA}_{t_0+1},t_0}\big]=\ww^*_{t_0+1}$. Moreover, the conditional density is $\KKa(\ww_{t_0+1}^*,\cdot)$.

	We perform a similar analysis by
	replacing $\TT^{\strA}$ by $\TT^{\strGfull{t_0}}$ in the above analysis. We denote $\big[W^{\strGfull{t_0}}_{S^{\strGfull{t_0}}_{t_0+1},t_0}\big]=\ww_{t_0+1}$ and obtain that the conditional density of $\big[W^{\strGfull{t_0}}_{S^{\strGfull{t_0}}_{t_0+1},t_0+1}\big]$ given $\big\{X_{S^{\strA}_{t}}=x_t,S^{\strA}_{t}=s_t, 1\leq t\leq t_0\big\}$ is $\KKa(\ww_{t_0+1},\cdot)$.

	According to the above analysis and Strassen Theorem for pospace (Fact~\ref{fact:strassen-pospace}), to prove the proposition, it is sufficient to show $\KKa(\ww_{t_0+1},\cdot)\lleq_{st}\KKa(\ww_{t_0+1}^*,\cdot)$. By Lemma~\ref{lemma:monotone-kka}, we have $\KKa(\uu,\cdot)\lleq_{st}\KKa(\uu',\cdot)$ for any $\uu\lleq\uu'\in\spaceo$. Thus, it is sufficient to show that $\ww_{t_0+1}\lleq \ww_{t_0+1}^*$.

	Now we compare $\ww_{t_0+1}$ and $\ww^*_{t_0+1}$. According to the definition of $\TT^{\strGfull{t_0}}$ and Lemma~\ref{lemma:one-step-ho}, we know $\ww_{t_0+1}=\Ho([\ww_{t_0}])$. There are two cases: 1) $\ww_{t_0+1}^*=\ab$, and 2) $\ww_{t_0+1}^*\neq\ab.$ We analyze these cases separately. For the first case, $\ww_{t_0+1}\lleq \ww_{t_0+1}^*$ by definition of the partial order. For the second case, according to Lemma~\ref{lemma:control} and Lemma~\ref{lemma:ho-domin}, we can see that $\ww_{t_0+1}=\Ho([\ww_{t_0}])\neq \ab$. Write $\ww_{t_0} = (w_{t_0,1},\cdots, w_{t_0,m})$ for some $m$, then $\ww^*_{t_0+1}$ can be written as $\ww^*_{t_0+1}=(w_{t_0,k_1},\cdots, w_{t_0,k_l})$ for some distinct $k_1,\cdots,k_l\in\{1,\cdots,m\}$. According to Lemma~\ref{lemma:control}, for $\TT^{\strA}$ to control $\lpcr$ at time $t_0+1$, $\ww^*_{t_0+1}$ satisfies $\sum_{i=1}^l w_{t_0,k_i}\leq \alpha l$. Thus, according to Lemma~\ref{lemma:ho-domin}, $\ww_{t_0+1}=\Ho([\ww_{t_0}])\lleq [\ww^*_{t_0+1}]=\ww^*_{t_0+1}$.
	\end{proof}
	\bigskip

\begin{customprop}{B.2}
Suppose that model {\mhomo} holds. Then
for any $\yy, \yy'\in \spaceo$ such that $\yy \lleq \yy'$, there exists a coupling $(\hat{Y}_s,\hat{Y}'_s), s=0, 1, ... $,  satisfying
\begin{enumerate}
	\item  $\{\hat{Y}_s:s\geq 0\}$ has the same distribution as the conditional process $\{Y_s:s\geq 0\}$ given $Y_{0}=\yy$, and $\{\hat{Y}'_s:s\geq 0\}$ has the same distribution as the conditional process $\{Y'_s:s\geq 0\}$ given $Y_{0}=\yy'$.
	\item  $\hat{Y}_s\lleq \hat{Y}'_s$, a.s. for all $s\geq 0$.
\end{enumerate}
Moreover, the process $(\hat{Y}_s,\hat{Y}'_s)$ does not depend on $\TT^{\strA}$, $t_0$, or the information filtration $\fil^{\strA}_{t_0}$.
\end{customprop}
\begin{proof}[Proof of Proposition~\ref{prop:monotone-o}]
	Recall $Y_s=\left[\left(W^{\strGfull{t_0}}_{k,t_0+s}\right)_{k\in S^{ \strGfull{t_0}}_{t_0+s}}\right]$. By letting $t=t_0$ in Lemma~\ref{lemma:kko}, we obtain that $\{Y_s\}_{s\geq 0}$ is a homogeneous Markov chain, whose transition kernel is $\KKo$, which is independent of the sequential procedure $\TT^{\strA}$, $t_0$, and the information filtration $\fil^{\strA}_{t_0}$.
For the rest of the proof, according to Definition~\ref{def:kernel} and Fact~\ref{fact:strassen-markov}, it is sufficient to show that $\KKo$ is  stochastically monotone. That is,
$\KKo(\uu,\cdot)\lleq_{st}\KKo(\uu',\cdot)$ for any $\uu,\uu'\in\spaceo$ with $\uu\lleq \uu'$. Thus, it is sufficient to show that for all $\uu\lleq \uu'$ {there exists a coupling $(\hat{V},\hat{V}')$} such that $\hat{V}\sim  \KKo(\uu,\cdot)$, $\hat{V}'\sim  \KKo(\uu',\cdot)$ and $\hat{V}\lleq \hat{V}'$ a.s. In what follows, we  construct such a coupling.

	For $\uu\lleq \uu'$ with $\uu,\uu'\in\spaceo$, we know that $\Hu(\uu)\lleq \Hu(\uu')$ by Lemma~\ref{lemma:h-monotone}.
	By the definition of the partial order, this implies that $\card(\Hu(\uu'))\leq \card(\Hu(\uu))$ and $\Hu(\uu)_k\leq \Hu(\uu')_k$	for each $1\leq k\leq \card(\Hu(\uu'))$. By Lemma~\ref{lemma:monotone-v}, this further implies
	\begin{equation}
		K(\Hu(\uu)_k,\cdot)\leq_{st}K(\Hu(\uu')_k,\cdot)
	\end{equation}
for $k=1,...,\card(\Hu(\uu'))$. Thus, by Strassen's Theorem for random variables (Fact~\ref{fact:straseen-rv}), this implies that there exists a coupling $(\hat{Z}_k,\hat{Z}'_k)$ such that
\begin{equation}\label{eq:z-coupling}
	\hat{Z}_k\sim K(\Hu(\uu)_k,\cdot), \hat{Z}'_k\sim K(\Hu(\uu')_k,\cdot), \text{ and } \hat{Z}_k \leq \hat{Z}'_k \text{ a.s.}
\end{equation}
for $k=1,...,\card(\Hu(\uu'))$.
In addition, we choose the coupling so that $(\hat{Z}_k,\hat{Z}'_k)$ are independent for different $k$. For $\card(\Hu(\uu'))<k\leq \card(\Hu(\uu))$, we construct $\hat{Z}_k\sim K(\Hu(\uu)_k,\cdot)$  so that $\hat{Z}_k$'s are independent for different $k$.
Let $\hat{Z}=(\hat{Z}_1,\cdots, \hat{Z}_{\card(\Hu(\uu))})$ and $\hat{Z}'=(\hat{Z}'_1,\cdots,\hat{Z}'_{\card(\Hu(\uu'))})$.

For this coupling, it is easy to verify
\begin{equation}
	\card(\hat{Z})\geq \card(\hat{Z}') \text{ and } \hat{Z}_k \leq \hat{Z}'_k \text{ for } 1\leq k\leq \card(\hat{Z}') ~~ a.s.
\end{equation}
Thus, $[\hat{Z}]\lleq [\hat{Z}']$ a.s.
Let $\hat{V}=[\hat{Z}]$ and $\hat{V}'=[\hat{Z}']$. Then, our coupling $(\hat{V}, \hat{V}')$ gives
\begin{equation}
	\hat{V}\lleq \hat{V}' \text{ a.s.}
\end{equation}
On the other hand, by Lemma~\ref{lemma:so-markov-generation}, we have
\begin{equation}
	\hat{V}\sim \KKo(\uu,\cdot)\text{ and } \hat{V}'\sim\KKo (\uu',\cdot).
\end{equation}
Therefore,
\begin{equation}
	\KKo(\uu,\cdot)\lleq_{st} \KKo(\uu',\cdot).
\end{equation}

\end{proof}

\subsection{Proof of supporting lemmas in Section~\ref{sec:lemma-odering}}\label{sec:proof-lemma}
\pospace*
\begin{proof}[Proof of Lemma~\ref{lemma:pospace}]
	First, $\spaceo$ is the union of polish spaces $\{\uu=(u_1,\cdots,u_m):0\leq u_1\leq\cdots \leq u_m\leq 1\}$ and $\{\ab\}$. Thus, it is also a polish space. Second, it is straightforward to verify that  $d(\uu,\vv)$ is a metric defined over $\spaceo$.

	Now, we verify that the partial order relationship $\lleq$ is closed over $\spaceo$. To see this, let $\uu,\vv\in\spaceo$ satisfying $\uu\not\lleq\vv$. There are two cases: 1) $\card(\uu)<\card(\vv)$, or 2) $\card(\uu)\geq\card(\vv)$ and there exists $m\in\{1,\cdots,\card(\vv)\}$ such that $u_m>v_m$. Let $B_{d}(\uu,\delta)$ and $B_{d}(\vv,\delta)$ be $d$-balls centering at $\uu$ and $\vv$ with $\delta$ chosen according to different cases: $\delta=1/2$ for the first case; and $\delta=\frac{u_m-v_m}{4}$ for the second case. Then, it is easy to verify that for all $\uu'\in B_{d}(\uu,\delta)$ and $\vv'\in B_{d}(\vv,\delta)$, we have $\uu'\not\lleq \vv'$. That is, the partial order relationship $\lleq$ is closed over $\spaceo$.
\end{proof}
\lemmaonestepho*
\begin{proof}[Proof of Lemma~\ref{lemma:one-step-ho}]
If $\uu=\ab$, then $[\uu]=\ab$ and $|S_t|=0$. This implies $\Io([\uu])=0$ and $\Ho([\uu])=\ab$. In the rest of the proof we assume that $\uu\neq \ab$.
By Step 1 of Algorithm~\ref{alg:one-step-rule1}, we obtain that
$[\uu]=(W_{k_1,t},\cdots,W_{k_{|S_t|},t})$ where $S_{t}=\{k_1,\cdots,k_{|S_t|}\}$ and $W_{k_1,t}\leq\cdots W_{k_{|S_t|},t}$. According to Step 2 and 3 of the algorithm and the definition of $\Io([\uu])$ in \eqref{eq:io}, the largest $n$ making $R_{n}\leq \alpha$ is $\Io([\uu])$ and $\Ho([\uu])=[(W_{k,t})_{k\in S_{t+1}}]$.
\end{proof}
\lemmacontrol*
\begin{proof}[Proof of Lemma~\ref{lemma:control}]
By definition and the $\fil_t$ measurability of $S_{t+1}$,
\begin{equation}
	\lpcr_{t+1}(\TT)=\expe\left[\frac{\sum_{k\in S_{t+1}}\ind(\tau_{k}<t)}{|S_{t+1}|\vee 1}\Big|\mathcal{F}_t\right]= \frac{\sum_{k\in S_{t+1}}\prob(\tau_{k}<t|\mathcal{F}_t)}{|S_{t+1}|\vee 1}=\frac{\sum_{k\in S_{t+1}}W_{k,t}}{|S_{t+1}|\vee 1}.
\end{equation} Thus, $\TT\in \adset_{\alpha}$ if and only if
\begin{equation}
	\frac{\sum_{k\in S_{t+1}}W_{k,t}}{|S_{t+1}|\vee 1}\leq \alpha \text{ a.s.,}
\end{equation}
which is equivalent to
\begin{equation}
	\sum_{k\in S_{t+1}}W_{k,t}\leq \alpha |S_{t+1}| ~~ a.s.,
\end{equation}
for every $t$.
\end{proof}
\lemmahodomin*
\begin{proof}[Proof of Lemma~\ref{lemma:ho-domin}]
We first prove the `Moreover' part of the lemma by contradiction. If on the contrary $\Ho([\uu])=\ab$ and there exists a non-empty set $S\subset\{1,\cdots, m\}$ such that $\sum_{i\in S}u_i\leq \alpha|S|$, then there exists $i\in S$ such that $u_i\leq \alpha$. This further implies $[\uu]_1\leq u_i\leq \alpha$ and $\Io([\uu])\geq 1$, which contracts with the assumption $\Ho([\uu])=\ab$.

We proceed to the proof of the rest of the lemma. We first prove that  $l$ in the lemma satisfies $l\leq \Io([\uu])$. To see this, recall that $([\uu]_1,\cdots,[\uu]_m)$ is the order statistic of $(u_1,\cdots,u_m)$. Thus,
\begin{equation}\label{eq:l}
	\sum_{i=1}^l [\uu]_i\leq  \sum_{i=1}^l u_{k_i} \leq  \alpha l.
\end{equation}
Recall $\Io([\uu])=\sup\{n: \sum_{i=1}^n[\uu]_i\leq \alpha n, n\in\{0,\cdots, m\}\}$. Thus, \eqref{eq:l} implies $l\leq \Io([\uu])$.

Next, we prove that $\Ho([\uu])\lleq [(u_{k_1},\cdots, u_{k_l})]$. Without loss of generality, assume $u_{k_1},\cdots,u_{k_l}$ are ordered.
That is, $u_{k_1}\leq\cdots\leq u_{k_l}$ and $[(u_{k_1},\cdots, u_{k_l})]=(u_{k_1},\cdots,u_{k_l})$.
Then, according to the definition of the order statistic $[\uu]$, we have $[\uu]_i\leq u_{k_i}$ for $i=1,\cdots, l$. Recall $\Ho([\uu])=([\uu]_1,\cdots,[\uu]_{\Io(\uu)})$.  This implies $\Ho([\uu])\lleq [(u_{k_1},\cdots, u_{k_l})]$.
\end{proof}
\hmonotone*
\begin{proof}[Proof of Lemma~\ref{lemma:h-monotone}]
If $\vv=\ab$, then $\Ho(\vv)=\ab$ and $\Ho(\uu)\lleq \ab=\Ho(\vv)$ by the definition of the partial order. In the rest of the proof we assume $\card(\vv)\geq 1$ and $\vv=(v_1,\cdots,v_{\card(\vv)})$. As we assumed $\uu\lleq \vv$, this implies $\card(\uu)\geq\card(\vv)\geq 1$. We further denote $\uu=(u_1,\cdots,u_{\card(\uu)})$

We first show that if $\sum_{i=1}^{L+1}v_i\leq \alpha(L+1)$ for some $L$, then $\sum_{i=1}^{L}v_i\leq \alpha L$. That is, $(\sum_{i=1}^{L}v_i)/L$ is increasing in $L$. To see this, consider two cases. If $v_{L+1}\leq \alpha$, then $v_1\leq\cdots\leq v_L\leq \alpha$ and thus $\sum_{i=1}^{L}v_i\leq \alpha L$. If $v_{L+1}>\alpha$, then $\sum_{i=1}^{L}v_i\leq \sum_{i=1}^{L+1}v_i -\alpha\leq \alpha L$. This result implies that $\sum_{i=1}^{L}v_i\leq \alpha L$ for all $1\leq L\leq\Io(\vv)$.

Now we show that $\Io(\uu)\geq\Io(\vv)$ by contradiction. If on the contrary $\Io(\uu)<\Io(\vv)$, then $\Io(\uu)+1\leq \Io(\vv)\leq \card(\vv)$ and
	\begin{equation}
		\sum^{\Io(\uu)+1}_{i=1} u_{i}\leq \sum_{i=1}^{\Io(\uu)+1} v_i\leq \alpha (\Io(\uu)+1).
	\end{equation}
	This contradicts with the definition of $\Io(\uu)$. Therefore, $\Io(\uu)\geq\Io(\vv)$.

We proceed to showing $\Ho(\uu)\lleq\Ho(\vv)$.
By the definition of $\Ho$, we have
$\Ho(\uu)=(u_1,\cdots, u_{\Io(\uu)})$ and $\Ho(\vv)=(v_1,\cdots,v_{\Io(\vv)})$. Since we assume $\uu\lleq \vv$, we have $u_i\leq v_i$ for all $i=1,\cdots,\Io(\vv)$. This shows that $\Ho(\uu)\lleq\Ho(\vv)$.

\end{proof}
\likelihoodratiomonotone*
\begin{proof}[Proof of Lemma~\ref{lemma:likelihood-ratio-monotone}]
	Let $g$ be a bounded increasing function. Then,
	\begin{equation}
	\begin{split}
				&\expe
		g(L_{\delta_2})-\expe g(L_{\delta_1})\\
		= & \expe_{Z\sim \delta_2q + (1-\delta_2)p}g\big(L(Z)\big)-\expe_{Z\sim \delta_1q + (1-\delta_1)p}g\big(L(Z)\big)\\
		= & \delta_2 \expe_{Z\sim q}g\big(L(Z)\big)+(1-\delta_2)\expe_{Z\sim p}g\big(L(Z)\big)\\
		& - \left\{\delta_1 \expe_{Z\sim q}g\big(L(Z)\big)+(1-\delta_1)\expe_{Z\sim p}g\big(L(Z)\big)\right\}\\
		= & (\delta_2-\delta_1) \left\{\expe_{Z\sim q}g\big(L(Z)\big) - \expe_{Z\sim p}g\big(L(Z)\big)\right\}.
	\end{split}
	\end{equation}
	Note that $L(Z)=q(Z)/p(Z)$ and $\expe_{Z\sim q}g\big(L(Z)\big)= \expe_{Z\sim p}\left\{L(Z)g\big(L(Z)\big)\right\}$. Thus, the above display can be further written as
	\begin{equation}
		\expe
		g(L_{\delta_2})-\expe g(L_{\delta_1})
		= (\delta_2-\delta_1)\expe_{Z\sim p}\left\{\big(L(Z)-1\big) g\big(L(Z)\big)\right\}.
	\end{equation}
	For notational simplicity, let $Y=L(Z)$ with $Z\sim p$. Then, $\expe(Y)=1$ and the above display implies
	\begin{equation}
				\expe g(L_{\delta_2})-\expe g(L_{\delta_1})
		= (\delta_2-\delta_1)\expe\left\{(Y-1)g(Y)\right\}=(\delta_2-\delta_1)\expe\left\{(Y-1)(g(Y)-g(1))\right\}\geq 0.
	\end{equation}
	The last inequality in the above display is due to the fact that $(Y-1)(g(Y)-g(1))\geq 0$ for all increasing function $g$. We remark that it is also a special case of Harris inequality \citep{harris1960lower}.
\end{proof}
\lemmadistv*
\begin{proof}[Proof of Lemma~\ref{lemma:dist-v}]
First, it is easy to see that $\{V_{k,s}\}_{s\geq 0}$ are independent and identically distributed processes for different $k$.
For the rest of the proof, it is sufficient to prove the lemma for $k=1$.
{For the ease of exposition, we use the notation $X_{k,s:t}$ to denote $(X_{k,r})_{s\leq r\leq t}$.}
First, $\prob\left(\tau_1<0\vert X_{1,1:0}\right)=\prob(\tau_1<0)=0=V_0$. Thus, it is sufficient to verify the update rule for $V_{1,t}$. A direct calculation gives
\begin{equation}
	\begin{split}
		 & \prob(\tau_1\leq t-1|X_{1,1:t})\\
		= & \frac{\sum_{s=0}^{t-1}\prob(\tau_1= s) \prod_{r=1}^s p(X_{1,r})\prod_{r=s+1}^t q(X_{1,r})}{\sum_{s=0}^{t-1}P(\tau_1= s) \prod_{r=1}^s p(X_{1,r})\prod_{r=s+1}^t q(X_{1,r}) + P(\tau_1\geq t) \prod_{r=1}^t p(X_{1,r})}\\
		= & \frac{\sum_{s=0}^{t-1}\theta(1-\theta)^{s} L_{1,(s+1):t}}{\sum_{s=0}^{t-1}\theta(1-\theta)^{s} L_{1,(s+1):t}+ (1-\theta)^{t}}\\
		= &\frac{Q_{1,t}}{Q_{1,t}+(1-\theta)^{t}}\\
	\end{split}
\end{equation}
where we write $L_{k,(s+1):t}:=\prod_{r=s+1}^t \frac{q(X_{k,r})}{p(X_{k,r})}$, the likelihood ratio between $p(\cdot)$ and $q(\cdot)$ based on the data $X_{1,(s+1):t}$, and $Q_{1,t}=\sum_{s=0}^{t-1}\theta(1-\theta)^{s} L_{1,(s+1):t}$.
Then,
\begin{equation}
	Q_{1,t}=\frac{(1-\theta)^t \prob(\tau_1\leq t-1|X_{1,1:t})}{1-\prob(\tau_1\leq t-1|X_{1,1:t})}.
\end{equation}
Note that
\begin{equation}
	Q_{1,t+1}=\sum_{s=0}^{t}\theta(1-\theta)^{s} L_{1,(s+1):t+1}
	= q(X_{1,t+1})/p(X_{1,t+1})\left\{\theta(1-\theta)^t + Q_{1,t}\right\}.
\end{equation}
Thus,
\begin{equation}
	\begin{split}
	&\prob(\tau_1\leq t|X_{1,1:t+1})\\
	 =  &\frac{Q_{1,t+1}}{Q_{1,t+1}+(1-\theta)^{t+1}}\\
	= &  \frac{q(X_{1,t+1})/p(X_{1,t+1})\left\{\theta(1-\theta)^t + Q_{1,t}\right\}}{q(X_{1,t+1})/p(X_{1,t+1})\left\{\theta(1-\theta)^t + Q_{1,t}\right\}+ (1-\theta)^{t+1}}\\
	= & \frac{q(X_{1,t+1})/p(X_{1,t+1})}{q(X_{1,t+1})/p(X_{1,t+1})+ (1-\theta)/\left\{\theta + (1-\theta)^{-t}Q_{1,t}\right\}}\\
	= & \frac{q(X_{1,t+1})/p(X_{1,t+1})}{q(X_{1,t+1})/p(X_{1,t+1})+ (1-\theta)/\left\{\theta + \frac{ \prob(\tau_1\leq t-1|X_{1,1:t})}{1-\prob(\tau_1\leq t-1|X_{1,1:t})}\right\}}.
	\end{split}
\end{equation}
We complete the proof by simplifying the above result.
\end{proof}
\lemmadeltat*
\begin{proof}[Proof of Lemma~\ref{lemma:delta_t}]
By symmetry, it is sufficient to prove the lemma for $k=1$. Recall
$L_{k,(s+1):t}=\prod_{r=s+1}^t \frac{q(X_{k,r})}{p(X_{k,r})}$ and $Q_{k,t}=\sum_{s=0}^{t-1}\theta(1-\theta)^{s} L_{1,(s+1):t}$.

A direct calculation using Bayes formula gives
	\begin{equation}
	\begin{split}
		  \delta_{k,t}
		= & \frac{\sum_{s=0}^{t-1}\prob(\tau_1= s) \prod_{r=1}^s p(X_{1,r})\prod_{r=s+1}^t q(X_{1,r})+P(\tau_1 = t) \prod_{r=1}^t p(X_{1,r})}{\sum_{s=0}^{t-1}P(\tau_1= s) \prod_{r=1}^s p(X_{1,r})\prod_{r=s+1}^t q(X_{1,r}) + P(\tau_1\geq t) \prod_{r=1}^t p(X_{1,r})}\\
		= & \frac{\sum_{s=0}^{t-1}\theta(1-\theta)^{s} L_{1,(s+1):t}+\theta(1-\theta)^t}{\sum_{s=0}^{t-1}\theta(1-\theta)^{s} L_{1,(s+1):t}+ (1-\theta)^{t}}\\
		= &\frac{Q_{1,t}+\theta(1-\theta)^t}{Q_{1,t}+(1-\theta)^{t}}\\
		= & V_{1,t} + \theta(1-V_{1,t})\\
		= &\theta+ (1-\theta)V_{1,t}.
	\end{split}
\end{equation}
\end{proof}
\lemmamonotonev*
\begin{proof}[Proof of Lemma~\ref{lemma:monotone-v}]
	We first study the conditional distribution of $X_{1,t+1}$ given $V_{1,0},\cdots,V_{1,t}$. According to the change point model \mhomo{}, we know that $X_{1,t+1}$ is conditionally independent of $V_{1,0},\cdots,V_{1,t}$ given the event $\{\tau_1\leq t\}$. That is, given $V_{1,0},\cdots, V_{1,t}$, the conditional density function of $X_{1,t+1}$ is
	$
	\delta_{1,t}q(x)+(1-\delta_{1,t}) p(x),
	$
	which depends on $X_{1,1},\cdots, X_{1,t}$ only through $V_{1,t}$.

	Let the function $L(x):=q(x)/p(x)$ and let $L_{k,t+1}:=q(X_{k,t+1})/p(X_{k,t+1})$. Then, $L_{1,t+1}= L(X_{1,t+1})$, whose conditional distribution given $V_{1,0},\cdots, V_{1,t}$ only depends on $V_{1,t}$. According to the iteration \eqref{eq:v-k-t}, this implies that the process $\{V_{1,t}\}_{t\geq 0}$ is a Markov process. Note that  $\delta_{1,t}$ and the iteration \eqref{eq:v-k-t} depend on $t$ only through $V_{1,t}$. Thus, this Markov chain is a homogeneous Markov chain. We now show that its transition kernel is stochastically monotone.

	Let $\delta(x)=\theta+(1-\theta)x$. For $x\in (0,1)$, we consider the following steps of generating a random variable $V(x)$.
	\begin{enumerate}
		\item Generate $Z(x)$ with the density $\delta(x)q(\cdot)+(1-\delta(x))p(\cdot)$.
		\item Let
		\begin{equation}
			V(x)=\frac{L(Z(x))}{L(Z(x))+(1-\theta)(1-x)/(\theta+(1-\theta)x)}.
		\end{equation}
	\end{enumerate}
	From the iteration \eqref{eq:v-k-t} and $X_{1,t+1}|V_t=x\sim (1-\delta(x))q(\cdot)+\delta(x)p(\cdot)$, we can see that $V(x)$ has the same distribution as that of $V_{1,t+1}$ given $V_{1,t}=x$. In other words, $V(x)$ has the density function $K(x,\cdot)$.

	Now we show that $K(x,\cdot)\leq_{st} K(x',\cdot)$ for any $0<x\leq x'<1$ by coupling. Specifically, since $\delta(x)$ is increasing in $x$, $\delta(x)\leq \delta(x')$. Then, 	by Lemma~\ref{lemma:likelihood-ratio-monotone}, we know $L(Z(x))\leq_{st}L(Z(x'))$. According to the Strassen Theorem for random variables (Fact~\ref{fact:straseen-rv}), there exists a coupling $(\hat{L},\hat{L}')$, such that $\hat{L}\eqd L(Z(x))$, $\hat{L}'\eqd L(Z(x'))$ and $\hat{L}\leq \hat{L}'$ a.s. Then, let $\hat{V}=\frac{\hat{L}}{\hat{L}+(1-\theta)(1-x)/(\theta+(1-\theta)x)}\eqd V(x)$ and $\hat{V}'=\frac{\hat{L}'}{\hat{L}'+(1-\theta)(1-x')/(\theta+(1-\theta)x')}\eqd V(x')$.

	Because $\hat{L}\leq \hat{L}'$ and $x\leq x'$,
	\begin{equation}
	\begin{split}
				\hat{V}&=\frac{\hat{L}}{\hat{L}+(1-\theta)(1-x)/(\theta+(1-\theta)x)}\\
		&\leq 		\frac{\hat{L}'}{\hat{L}'+(1-\theta)(1-x)/(\theta+(1-\theta)x)}\\
		&\leq \frac{\hat{L}'}{\hat{L}'+(1-\theta)(1-x')/(\theta+(1-\theta)x')}\\
		& = \hat{V}' ~~ a.s.
	\end{split}
	\end{equation}
That is, $\hat{V}\leq\hat{V}'$ a.s., and $(\hat{V},\hat{V}')$ is a coupling of $(V(x),V(x'))$. Thus, $V(x)\leq_{st} V(x')$ and so is $K(x,\cdot)\leq_{st} K(x',\cdot)$.
\end{proof}
\lemmakka*
\begin{proof}[Proof of Lemma~\ref{lemma:kka}]
First, if $\card(\uu)=0$, then $\uu=\ab$, and $\big[W^{\strA}_{S^{\strA}_{t+1},t}\big]=\uu$ means that $S^{\strA}_{t+1}=\emptyset$. Thus, the conditional distribution of
$\big[W^{\strA}_{S^{\strA}_{t+1},t+1}\big]$  given $\big[W^{\strA}_{S^{\strA}_{t+1},t}\big]=\uu$ is a point mass at $\ab$, and $\KKa(\ab,\ab)=1$. In the rest of the proof, we focus on the case that $\uu\neq\ab$.

We start with deriving the conditional density of $W^{\strA}_{S^{\strA}_{t+1},t+1}$ at $\vv\in \spaceu$ given $X_{S^{\strA}_1,1}=x_1, S^{\strA}_1=s_1,\cdots, X_{S^{\strA}_{t},t}=x_t, S^{\strA}_{t}=s_t$, $S^{\strA}_{t+1}=s_{t+1}$ and $W^{\strA}_{S^{\strA}_{t+1},t}=\uu$ for some $x_1,\cdots,x_t$ and $s_1,\cdots, s_{t+1}$, and $\uu\in\spaceu$.
Clearly, the conditional density is $0$ when $\card(\uu)\neq\card(\vv)$, and is arbitrary when $\card(\uu)\neq |s_{t+1}|$ (the density of the random variable being conditional on is zero). Thus, we will focus on the case where $\card(\uu)=\card(\vv)=|s_{t+1}|=m$ for some $m\in\{1,\cdots,K\}$, and we will write $\uu=(u_1,\cdots,u_m)$ and $\vv=(v_1,\cdots,v_m)$.

Note that given $S^{\strA}_{t+1}=s_{t+1}, W^{\strA}_{S^{\strA}_{t+1},t}=\uu$, $W^{\strA}_{k,t+1}$'s are independent for different $k\in s_{t+1}$. Moreover, given $S^{\strA}_{t+1}=s_{t+1}, W^{\strA}_{S^{\strA}_{t+1},t}=\uu$, $W^{\strA}_{k,t+1}$ is the same as $V_{k,t+1}$ (defined in \eqref{eq:v-k-t}) for $k\in s_{t+1}$, and is independent of $X_{S^{\strA},1}=x_1, S^{\strA}=s_1,\cdots, X_{S^{\strA},t}=x_t$ and $S^{\strA}_{t}=s_t$. Thus, $W^{\strA}_{S^{\strA}_{t+1},t}$ is conditionally independent of $\fil^{\strA}_{t}$ given $S^{\strA}_{t+1}=s_{t+1}, W^{\strA}_{S^{\strA}_{t+1},t}=\uu$, and its conditional density (by Lemma~\ref{lemma:monotone-v}) is
\begin{equation}
	\prod_{l=1}^m K(u_{l},v_l),
\end{equation}
Because $\big[W^{\strA}_{S^{\strA}_{t+1},t}\big]$ is the order statistic of $W^{\strA}_{S^{\strA}_{t+1},t}$, we further obtain its conditional density at $\vv\in\spaceo$  given $S^{\strA}_{t+1}=s_{t+1}, W^{\strA}_{S^{\strA}_{t+1},t}=\uu$,
\begin{equation}
		\sum_{\pi\in\PP_m}\prod_{l=1}^m K(u_{l},v_{\pi(l)})=\sum_{\pi\in\PP_m}\prod_{l=1}^m K([\uu]_l,v_{\pi(l)})=\KKa([\uu],\vv),
\end{equation}
for $\vv\in\spaceo$ with $\card(\vv)=m$.
Observe that the above function is independent of $s_{t+1}$ for $|s_{t+1}|=m$ and depend on $\uu$ only through its order statistic $[\uu]$. Thus, we further conclude that $\big[W^{\strA}_{S^{\strA}_{t+1},t+1}\big]$ is conditionally independent of $\fil^{\strA}_{t}$ given $\big[W^{\strA}_{S^{\strA}_{t+1},t}\big]=\uu\in\spaceo$ satisfying $\card(\uu)=m$, and its conditional density is $\KKa(\uu,\cdot)$.	
\end{proof}
\lemmasomarkovgenerationA*
\begin{proof}[Proof of Lemma~\ref{lemma:so-markov-generation-A}]
	The lemma is obviously true when $m=0$. When $m\geq 1$, let $\zz=(z_1,\cdots,z_m)$.
		By step 1, the joint density for $(Z_{1},\cdots,Z_m)$ at $\zz$ is
		$$
		\prod_{i=1}^m K(u_i,z_i).
		$$
		By step 2, $V$ is the order statistic of $(Z_1,\cdots,Z_m)$. Thus, its density is
		\begin{equation}
			\sum_{\pi\in\PP_m} \prod_{i=1}^m K(u_i,z_{\pi(i)})=\KKa(\uu,\zz).
		\end{equation}
	\end{proof}

\lemmamonotonekka*
	\begin{proof}[Proof of Lemma~\ref{lemma:monotone-kka}]
	The lemma is obvious if $\uu'=\ab$. In what follows, we assume $\card(\uu')=m'\geq 1$ and $\card(\uu)=m$. Then, $\uu\lleq \uu'$ means $m\geq m'\geq 1$ and $u_l\leq u'_l$ for $1\leq l\leq m'$. Let $(Z_1,Z_1'),\cdots (Z_{m},Z_m')$ be independent random vectors such that $Z_l\sim K(u_l,\cdot)$, $Z_l'\sim K(u_l',\cdot)$ and $Z_l\leq Z_l'$ a.s. Such random vectors exists because of Strassen Theorem and 	Lemma~\ref{lemma:monotone-v} that the kernel $K(\cdot,\cdot)$ is stochastically monotone. In addition, for $m<l\leq m'$, let $Z_{l}'\sim K(u_l',\cdot)$  be independent random variables.

	Let $Z=(Z_1,\cdots,Z_m)\sim \KKa(\uu,\cdot)$, $Z'=(Z_1',\cdots,Z_{m'}')$, $V=[Z]$ and $V'=[Z']$. Then, $V\lleq V'$ a.s.
	 On the other hand, by Lemma~\ref{lemma:so-markov-generation-A}, we have
	\begin{equation}
		V\sim \KKa(\uu,\cdot) \text{ and } V'\sim\KKa(\uu',\cdot),
	\end{equation}
	and $V\lleq V'$ a.s. By Fact~\ref{fact:strassen-pospace}, the existence of such a coupling implies $\KKa(\uu,\cdot)\lleq_{st}\KKa(\uu',\cdot)$.
\end{proof}

\lemmakko*
\begin{proof}[Proof of Lemma~\ref{lemma:kko}]
	Apply Lemma~\ref{lemma:kka} by replacing $\TT^{\strA}$ by $\TT^{\strGfull{t}}$ and $t$ by $t+s$, we obtain that $\big[W^{\strGfull{t}}_{S^{\strGfull{t}}_{t+s+1},t+s+1}\big]$ is conditionally independent of $\fil^{\strGfull{t}}_{t+s}$ given $\big[W^{\strGfull{t}}_{S^{\strGfull{t}}_{t+s+1},t+s}\big]$.
	On the other hand, according to the one-step update rule in Algorithm~\ref{alg:one-step-rule1} and Lemma~\ref{lemma:one-step-ho}, we can see that $\big[W^{\strGfull{t}}_{S^{\strGfull{t}}_{t+s+1},t+s}\big]=\Ho\Big(\big[W^{\strGfull{t}}_{S^{\strGfull{t}}_{t+s},t+s}\big]\Big)$. Therefore, we further obtain that $\big[W^{\strGfull{t}}_{S^{\strGfull{t}}_{t+s+1},t+s+1}\big]$ is conditionally independent of $\fil^{\strGfull{t}}_{t+s}$ given $\big[W^{\strGfull{t}}_{S^{\strGfull{t}}_{t+s},t+s}\big]$.

	We proceed to derive its conditional density at $\vv$ given $\big[W^{\strGfull{t}}_{S^{\strGfull{t}}_{t+s},t+s}\big]=\uu$. We first notice that
	$\card(\vv)=|S^{\strGfull{t}}_{t+s+1}|=\Io(\uu)$ (by Lemma~\ref{lemma:one-step-ho}). Thus, the conditional density is zero when $\card(\vv)\neq \Io(\uu)$. For $\card(\vv)=\Io(\uu)$, by Lemma~\ref{lemma:kka} and the above analysis, the conditional density is
	$$
	\KKa(\Ho(\uu),\vv)=	\sum_{\pi \in \PP_{\Io(\uu)}}\prod_{l=1}^{\Io(\uu)}K(\Ho(\uu)_l,v_{\pi(l)})=\KKo(\uu,\vv).
	$$
	This completes the proof of the lemma.
\end{proof}
\lemmasomarkovgeneration*
\begin{proof}[Proof of Lemma~\ref{lemma:so-markov-generation}]
	The lemma is a direct application of Lemma~\ref{lemma:so-markov-generation-A} and $\KKo(\uu,\vv)=\KKa(\Ho(\uu),v)$.
\end{proof}

\section{Proof of Lemma~\ref{lem:update} and Propositions~\ref{prop:control} - \ref{prop:size}}


\begin{customlem}{1}
	Under model {\mhomo}, $W_{k,0}=0$ for $1\leq k\leq K$ and $W_{k,t}$ can be computed using the following update rule for $1\leq k\leq K$,
	\begin{equation*}
		W_{k,t+1}= \begin{cases}
					\frac{q(X_{k,t+1})/p(X_{k,t+1})}{(1-\theta)(1-W_{k,t})/(\theta+(1-\theta)W_{k,t})+q(X_{k,t+1})/p(X_{k,t+1})}&\text{ for } 1\leq t\leq T_{k}-1,\\
					W_{k,T_{k}} & \text{ for } t\geq T_k.
		\end{cases}
	\end{equation*}
\end{customlem}
\begin{proof}[Proof of Lemma~\ref{lem:update}]
For each $k\in S_{t+1}$, according to the independence assumption for model \mhomo{},
\begin{equation}
	W_{k,t+1}=\prob(\tau_k<t+1|\mathcal{F}_{t+1})
	= \prob(\tau_k<t+1|X_{k,1:t+1}).
\end{equation}
On the other hand, according to Lemma~\ref{lemma:dist-v}, we have
\begin{equation}
\prob(\tau_k<t+1|X_{k,1:t+1})=\frac{q(X_{k,t+1})/p(X_{k,t+1})}{(1-\theta)(1-W_{k,t})/(\theta+(1-\theta)W_{k,t})+q(X_{k,t+1})/p(X_{k,t+1})}.
\end{equation}
Thus, for $k\in S_{t+1}$,
\begin{equation}\label{eq:iter-W}
	W_{k,t+1}=\frac{q(X_{k,t+1})/p(X_{k,t+1})}{(1-\theta)(1-W_{k,t})/(\theta+(1-\theta)W_{k,t})+q(X_{k,t+1})/p(X_{k,t+1})}.
\end{equation}
Note that $k\in S_{t+1}$ is equivalent to $T_{k}\geq t+1$. Thus, \eqref{eq:iter-W} holds for $1\leq t\leq T_k-1$.
Moreover, for $t\geq T_{k}$,
\begin{equation}
	W_{k,t+1}=\prob(\tau_k<t+1|\mathcal{F}_{t+1})=\prob(\tau_k<t|X_{1,1:T_k}, T_k) = W_{k,T_k}.
\end{equation}
This completes our proof.
\end{proof}

We proceed to the proofs of propositions.
\begin{customprop}{1}
Suppose that we obtain the index set $S_{t+1}$ using Algorithm~\ref{alg:one-step-rule1}, given
the index set $S_t$ and information filtration $\mathcal F_t$ at time $t$. Then the LFNR at time $t+1$ satisfies
$$\expe\left(\frac{\sum_{k \in S_{t+1}} \ind\left( \tau_k<t\right) }{|S_{t+1}| \vee 1}\big\vert \mathcal F_t\right)\leq \alpha.$$
\end{customprop}

\begin{proof}[Proof of Proposition~\ref{prop:control}]
	First, it is easy to see that $S_{t+1}$ obtained from Algorithm~\ref{alg:one-step-rule1} is $\mathcal{F}_t$ measurable. Thus,
	\begin{equation}
		\expe\left(\frac{\sum_{k \in S_{t+1}} \ind\left( \tau_k<t\right) }{|S_{t+1}| \vee 1}\big\vert \mathcal F_t\right)
		= \frac{\sum_{k\in S_{t+1}}W_{k,t}}{|S_{t+1}|\vee 1}.
	\end{equation}
	 On the other hand, according to the second and third steps of the algorithm,
	\begin{equation}
	 	\frac{\sum_{k\in S_{t+1}}W_{k,t}}{|S_{t+1}|\vee 1}
	 	= R_n \leq \alpha.
	 \end{equation}
	 Therefore, $\expe\left(\frac{\sum_{k \in S_{t+1}} \ind\left( \tau_k<t\right) }{|S_{t+1}| \vee 1}\big\vert \mathcal F_t\right)\leq \alpha$.

\end{proof}
\begin{customprop}{2}
Let $\TTp$ be defined in Algorithm~\ref{alg:SSS}. Then, $\TTp\in\adset_{\alpha}$.
\end{customprop}
\begin{proof}[Proof of Proposition~\ref{prop:control2}]
	This proposition is proved by combining the results of Proposition~\ref{prop:control} and Lemma~\ref{lemma:control}.
\end{proof}

\bigskip
\begin{customprop}{3}
Given LFNR level $\alpha$ and information filtration $\mathcal F_t$,
the index set $S_{t+1}$ given by Algorithm~\ref{alg:one-step-rule1} is locally optimal at time $t+1$.
\end{customprop}

\begin{proof}[Proof of Proposition~\ref{prop:size}]
Let $S_{t+1}$ be the index set obtained by Algorithm~\ref{alg:one-step-rule1}. By Lemma~\ref{lemma:one-step-ho}, $|S_{t+1}|=\Io([W_{S_{t},t}])$ and $[W_{S_{t+1},t}]=\Ho([W_{S_{t},t}])$. There are two cases: 1) $|S_{t+1}|=0$, and 2) $|S_{t+1}|=n\geq 1$. For the first case, $[W_{S_{t+1},t}]=\ab$.
Note that $\expe\left(\frac{\sum_{k \in S} \ind\left( \tau_k<t\right) }{|S| \vee 1}\big\vert \mathcal F_t\right)=\frac{\sum_{k \in S} W_{k,t}}{|S| \vee 1}$. By the `Moreover' part of Lemma~\ref{lemma:ho-domin}, we can see that the only set $S$ satisfying $\expe\left(\frac{\sum_{k \in S} \ind\left( \tau_k<t\right) }{|S| \vee 1}\big\vert \mathcal F_t\right)\leq \alpha$ is $S=\emptyset$. That is $|S|=0$. Thus, $|S_{t+1}|\geq |S|$.

For the second case where $|S_{t+1}|=n\geq 1$ and any set $|S|$ satisfying $\expe\left(\frac{\sum_{k \in S} \ind\left( \tau_k<t\right) }{|S| \vee 1}\big\vert \mathcal F_t\right)\leq \alpha$, we use Lemma~\ref{lemma:ho-domin} again and obtain that $[W_{S_{t+1},t}]=\Ho([W_{S_{t},t}])\lleq[W_{S,t}]$. This implies $|S_{t+1}|=\card([W_{S_{t+1},t}] )\geq \card([W_{S,t}])=|S|$.
\end{proof}

\section{Proof of Theorem~\ref{thm:large-sample-iid} and  Theorem~\ref{thm:large-complete-dependent}}\label{sec:proof-asymp}
\subsection{Proof of Theorem~\ref{thm:large-sample-iid}}\label{sec:proof-thm:large-sample-iid}
\begin{customthm}{2}
Assume that model {\mhomo} holds and Assumption A1 is satisfied.
 To emphasize the dependence on $K$, we denote the proposed procedure by $\TT^{\strG}_K$, the corresponding information filtration at time $t$ by $\mathcal F_{K,t}^{\strG}$, and the index set at time $t$ by
$S_{K,t}^{\strG}$. Then, the following results hold for each $t \geq 1$.
\begin{enumerate}
	\item $\lim_{K\to\infty} \hat{\lambda}_{K, t}=\lambda_{t}$ a.s., where $\hat{\lambda}_{K, t} = \max\left\{W_{k,t}: k\in S_{K,t+1}^{\strG}\right\}$
is the threshold used by $\TTp_K$.
\item $\lim_{K\to\infty}\lpcr_{t+1}(\TTp_K)= \expe\left(V_{t}\Big|V_{s}\leq \lambda_{s},0\leq s\leq t\right),$ a.s. Moreover,
\begin{equation*}
	\expe\left(V_{t}\Big|V_{s}\leq \lambda_{s},0\leq s\leq t\right)
	=
		\begin{cases}
	1-(1-\theta)^{t},~~ t< \frac{\log(1-\alpha)}{\log(1-\theta)},\\
	\alpha, ~~ t\geq\frac{\log(1-\alpha)}{\log(1-\theta)}.
	\end{cases}
\end{equation*}
\item $\lim_{K\to\infty} K^{-1}\vert S_{K,t+1}^{\strG}\vert =\prob\left(V_1\leq \lambda_1,\cdots, V_t\leq \lambda_t \right)$ a.s.
\end{enumerate}

\end{customthm}

We start with a lemma that is useful for the proof of  Theorem~\ref{thm:large-sample-iid}. Its proof is provided in Section~\ref{sec:proof-lemma-asymp}.
\begin{restatable}{lemma}{lemmacontinuousdensity}\label{lemma:continuous-density}
Under model \mhomo{} and Assumption A1, we have the following results.
\begin{enumerate}
	\item For each $t\geq 1$, $(V_1,\cdots,V_t)$ has a continuous and strictly positive joint density function over $(0,1)^t$ (with respect to the Lebesgue measure).
	\item For any $(v_1,\cdots,v_t)\in (0,1)^t$, $\prob(V_1\leq v_1,\cdots,V_t\leq v_t)>0$.
	\item For any $(v_1,\cdots,v_t)\in (0,1)^t$, the conditional distribution of $V_{t+1}$  given $V_1\leq v_1,\cdots, V_t\leq v_t$ has a continuous and positive density function over $(0,1)$.
\end{enumerate}
	
\end{restatable}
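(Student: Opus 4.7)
All three parts reduce to a single analytic claim: the one-step transition density of the Markov chain $\{V_t\}_{t\ge 0}$ from Lemma~\ref{lemma:monotone-v} is jointly continuous and strictly positive on $(0,1)\times(0,1)$. My plan is to establish this transition-density fact first and then derive Parts~1--3 in turn.

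For the transition density, I will use the update identity $V_{t+1} = L_{t+1}/(r(V_t) + L_{t+1})$, where $L_{t+1} = q(X_{1,t+1})/p(X_{1,t+1})$ and $r(v) = (1-\theta)(1-v)/(\theta + (1-\theta)v)$. By Lemma~\ref{lemma:delta_t} together with the fact that $V_t$ is $\sigma(X_{1,1:t})$-measurable, the conditional density of $X_{1,t+1}$ given $V_t = v$ equals $\delta(v) q + (1-\delta(v)) p$ with $\delta(v) = \theta + (1-\theta) v$. Pushing this forward through $x\mapsto q(x)/p(x)$ yields the conditional density of $L_{t+1}$ given $V_t = v$ as the mixture $\delta(v) g_q(\ell) + (1-\delta(v)) g_p(\ell)$, where $g_p$ and $g_q$ are the densities supplied by Assumption~A1, both continuous and strictly positive on $\mathbb{R}_+$. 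Hence this mixture is jointly continuous and strictly positive in $(v,\ell)\in(0,1)\times\mathbb{R}_+$. A smooth change of variable $\ell = r(v)\,u/(1-u)$ with Jacobian $r(v)/(1-u)^2$ then produces the conditional density of $V_{t+1}$ at $u$ given $V_t = v$, inheriting joint continuity and strict positivity on $(0,1)^2$.

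Part~1 now follows by induction on $t$. The base case $t=1$ is the same change-of-variable argument starting from the deterministic $V_0=0$. For the inductive step, the Markov property factorizes the joint density of $(V_1,\ldots,V_{t+1})$ as the joint density of $(V_1,\ldots,V_t)$ times the transition density evaluated at $(v_t,v_{t+1})$, and both factors are continuous and strictly positive. Part~2 is then immediate by integrating the strictly positive joint density over the open box $(0,v_1)\times\cdots\times(0,v_t)$. For Part~3, the conditional density of $V_{t+1}$ at $w$ given $\{V_s\le v_s,\ 1\le s\le t\}$ equals
\[
\frac{\int_0^{v_1}\!\!\cdots\int_0^{v_t} f_{V_1,\ldots,V_{t+1}}(u_1,\ldots,u_t,w)\,du_t\cdots du_1}{\prob(V_1\le v_1,\ldots,V_t\le v_t)};
\]
continuity in $w$ follows from dominated convergence using local boundedness of $f_{V_1,\ldots,V_{t+1}}$ on compact subsets of $(0,1)^{t+1}$, and strict positivity follows from Parts~1 and~2.

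The main technical nuisance, rather than a deep obstacle, will be careful bookkeeping near the boundary of $(0,1)^2$: the Jacobian $r(v)/(1-u)^2$ degenerates as $v\to 1^-$, and $g_p,g_q$ need not extend continuously to $\ell=0$. Fortunately the induction requires only interior continuity, and $V_t\in(0,1)$ almost surely for $t\ge 1$ since $\theta\in(0,1)$ and $L_s>0$ a.s., so the boundary of $(0,1)^2$ is irrelevant for the argument.
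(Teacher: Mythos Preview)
Your proposal is correct and follows essentially the same approach as the paper: both establish the transition density of $V_{t+1}$ given $V_t=v$ via the mixture law of $X_{1,t+1}$ and the change of variable $\ell = r(v)\,u/(1-u)$, then obtain Part~1 by induction and deduce Parts~2--3 from it. Your treatment of Part~3 is in fact more explicit than the paper's (which simply calls it a straightforward consequence); one small refinement worth noting is that the integration region $(0,v_1)\times\cdots\times(0,v_t)$ is not a compact subset of $(0,1)^t$, so instead of invoking local boundedness of the full joint density you should dominate by $M\cdot f_{V_1,\ldots,V_t}(u_1,\ldots,u_t)$, where $M$ bounds the transition density $K(u_t,w)$ uniformly for $(u_t,w)\in(0,v_t]\times[a,b]$ with $[a,b]\subset(0,1)$ compact.
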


\begin{proof}[Proof of Theorem~\ref{thm:large-sample-iid}]
	For a sufficiently large $t_0$ ($t_0>t$), let $\prob^*$ denote the probability measure for $(V_1,\cdots, V_{t_0})$, and let $\qrob$ be an arbitrary probability measure for a $t_0$-dimensional random vector. We define several mappings iteratively as follows. We initialize the mapping $\Lambda_0(\qrob)=1$ for every $\qrob$. Then, for $t\geq 1$, define
	\begin{equation}
	\begin{split}
			D_{t}(\lambda,\qrob) & = \qrob\left(V_{t}\leq\lambda, \VV_{t-1}\leq \bLam_{t-1}(\qrob)\right),\\
				N_{t}(\lambda,\qrob)&= \expe_{\qrob}\left[V_{t}\ind\left\{V_{t}\leq\lambda, \VV_{t-1}\leq \bLam_{t-1}(\qrob)\right\}\right],\\
			G_{t}(\lambda,\qrob) & = \frac{N_{t}(\lambda,\qrob)}{D_{t}(\lambda,\qrob)}=\expe_{\qrob}\left[V_{t}|V_{t}\leq\lambda, \VV_{t-1}\leq \bLam_{t-1}(\qrob)\right],\\
	\end{split}
	\end{equation}
	and
\begin{equation}
		\Lambda_{t}(\mathbb{Q})=\sup\left\{\lambda:G_{t}(\lambda,\mathbb{Q})\leq\alpha\text{ and }\lambda\in[0,1]\right\}.
	\end{equation}
In the above equations, we use notation $\VV_t=(V_1,\cdots,V_t)$ and $\bLam_t(\qrob)=(\Lambda_1(\qrob), \cdots, \Lambda_t(\qrob))$. In addition, $ \{\VV_t\leq \bLam_t(\qrob) \}$ denotes the event $\{V_1\leq \Lambda_1(\qrob),\cdots, V_t\leq \Lambda_t(\qrob)\}$.

The next lemma, whose proof is given in Section~\ref{sec:proof-lemma-asymp}, provides results about the above mappings. For two probability measures $\qrob$ and $\qrob'$ for a $t_0$-dimensional random vector $\VV_t$, their sup-norm is defined as $\|\qrob-\qrob'\|_{\infty}=\sup_{\vv\in \mathbb{R}^{t_0}}|\qrob(\VV_t\leq \vv)-\qrob'(\VV_t\leq \vv)|$. Then, we say a mapping $f(\qrob')$ is sup-norm continuous at $\qrob'=\qrob$ if $\lim_{\delta\to 0}\sup_{\qrob':\|\qrob'-\qrob\|_{\infty}\leq \delta} |f(\qrob')-f(\qrob)|=0.$
\begin{restatable}{lemma}{lemmamappings}\label{lemma:mappings}
	For each $1\leq t\leq t_0$, we have the following results.
\begin{enumerate}
	\item For any fixed $\qrob$,  $G_t(\lambda,\qrob)$ is non-decreasing in $\lambda$. Moreover, $G_t(\lambda,\prob^*)$ is strictly increasing in $\lambda\in (0,1]$ under Assumption A1.
	\item For any fixed $\lambda\in(0,1]$, $D_{t}(\lambda,\qrob)$, $N_{t}(\lambda,\qrob)$, and $G_t(\lambda,\qrob)$ are sup-norm continuous in $\qrob$ at $\qrob=\prob^*$ under Assumption A1.
	\item $\Lambda_t(\qrob)$ is sup-norm continuous at  $\qrob=\prob^*$ under Assumption A1. In addition, $\Lambda_t(\prob^*)>0$.
\end{enumerate}
\end{restatable}
By definition, $\lambda_{t}=\Lambda_{t}(\prob^*)$, where $\prob^*$ denotes the true probability measure of $(V_1,\cdots,V_{t_0})$. On the other hand, define the empirical measure (recall $V_{k,t}=\prob(\tau_k<t|X_{k,1},\cdots,X_{k,t})$)
	\begin{equation}
		\prob_{K}=\frac{1}{K}\sum_{k=1}^K \delta_{(V_{k,1},\cdots,V_{k,t_0})}.
	\end{equation}
It is not hard to verify that
	\begin{equation}
		\hat{\lambda}_{K,t}=\Lambda_{t}(\prob_{K}).
	\end{equation}
Now we are able to prove the first part of theorem. Let	\begin{equation}
		\mathcal{C}=\left\{
		(-\infty,\xx]:\xx\in \mathbb{R}^{t_0}
		\right\}
	\end{equation}
	where $(-\infty,\xx]$ denotes the set $(-\infty,x_1]\times\cdots\times (-\infty,x_{t_0}]$.
	It is known that $\mathcal{C}$ is a Vapnik-\v{C}hervonenkis class and thus,
		$\lim_{K\to\infty}\sup_{C\in \mathcal{C}}\left|\prob_{K}(\VV_{t_0}\in C) - \prob^*(\VV_{t_0}\in C) \right|=0~~ a.s.$ (see, e.g., \cite{shorack2009empirical}).
	In other words,
\begin{equation}\label{eq:vc-class}
	\lim_{K\to\infty}\|\prob_{K}-\prob^*\|_{\infty}=0 \text{ a.s.}
\end{equation}  This result combined with the third statement of Lemma~\ref{lemma:mappings} implies
	\begin{equation}\label{eq:sup-conv}
		\lim_{K\to\infty} \Lambda_{t}(\prob_{K}) = \Lambda_{t}(\prob^*)~~ a.s.
	\end{equation}
	That is, $\lim_{K\to\infty}\hat{\lambda}_{K,t}=\lambda_t$ a.s. This completes our proof for the first statement of the theorem. We proceed to the second and third statements of the theorem.
Let  
	\begin{equation}
		J_{t}(\mathbb{Q})=\expe_{\mathbb{Q}}\left(V_{t}\ind\left\{\VV_{t}\leq \bLam_{t}(\qrob)\right\}\right)
	\text{ and }
		H_{t}(\mathbb{Q})=\mathbb{Q}\left(\VV_{t}\leq \bLam_t(\qrob)\right).
	\end{equation}
	We can see that the mapping $H_{t}$ is the composition of $D_{t}(\cdot,\qrob)$ and $\Lambda_{t}(\qrob)$. According to Lemma~\ref{lemma:continuous-density} and Lemma~\ref{lemma:mappings}, both mappings are sup-norm continuous at $\qrob=\prob^*$, and as a result, their composition $H_{t}(\qrob)$ is also sup-norm continuous at $\qrob=\prob^*$. Similarly, according to Lemma~\ref{lemma:continuous-density} and Lemma~\ref{lemma:mappings}, we can also see that $J_{t}(\qrob)$ is sup-norm continuous at $\qrob=\prob^*$.

	These results, combined with \eqref{eq:vc-class}, give
	\begin{equation}\label{eq:three-1}
		\lim_{K\to\infty} H_{t}(\prob_K)=		 H_{t}(\prob^*)\text{ a.s.,}
	\end{equation}
	and
	\begin{equation}\label{eq:three-2}
				\lim_{K\to\infty} J_{t}(\prob_K)=		 J_{t}(\prob^*) \text{ a.s.}
	\end{equation}
Note that
\begin{equation}\label{eq:three-3}
	H_{t}(\prob_K)=K^{-1}|S_{t+1}^{\strG}| \text{ and }\frac{J_{t}(\prob_K)}{H_{t}(\prob_K)}=\expe(\pcr_{t+1}(\TT)|\mathcal{F}_t).
\end{equation}
\eqref{eq:three-1}, \eqref{eq:three-2}, and \eqref{eq:three-3} together complete the second and third statements of the theorem.

In the rest of the proof, we show that \eqref{eq:cond-expe-expr} holds.

We first show that for $t\leq L:=\frac{\log(1-\alpha)}{\log(1-\theta)}$, $\lambda_t=1$. We show this by induction. For $t=0$, $\lambda_0=1$ by definition. Assume that for some $t\geq 1$, $\lambda_0=\cdots=\lambda_{t-1}=1$, then
\begin{equation}
	G_t(\lambda,\prob^*) = \expe\left[V_t|V_t\leq \lambda,\VV_{t-1}\leq \bLam_{t-1}(\prob^*)\right] = \expe\left[V_t|V_t\leq \lambda\right].
\end{equation}
In addition, $G_t(1,\prob^*)=\expe(V_t)=\prob(\tau_1<t)=1-(1-\theta)^{t}\leq\alpha$ for $t\leq L$.
By Lemma~\ref{lemma:mappings}, we know that $G_t(\lambda,\prob^*)$ is  increasing in $\lambda$.
Thus,
\begin{equation}
	\lambda_t=\sup\left\{\lambda:G_t(\lambda,\prob^*)\leq \alpha \text{ and }\lambda\in[0,1] \right\}=1.
\end{equation}
This completes the induction.
As a result, for $1\leq t\leq L$, $\expe\left[V_t|V_t\leq \lambda_t,\VV_{t-1}\leq\llll_{t-1}\right]=G_t(1,\prob^*)=1-(1-\theta)^{t}$.

We proceed to the proof of \eqref{eq:cond-expe-expr} for $t\geq L$. Note that $N_t(\lambda,\prob^*)$ and $D_t(\lambda,\prob^*)$ are continuous in $\lambda\in(0,1)$ (note that $\VV_t$ has a joint probability density function by Lemma~\ref{lemma:continuous-density}). Moreover, by Lemma~\ref{lemma:mappings} and Lemma~\ref{lemma:continuous-density}, $D_t(\lambda,\prob^*)>0$ for $\lambda>0$. Thus, for each $t$, $G_{t}(\lambda_t,\prob^*)=\alpha$ is equivalent to
\begin{equation}\label{eq:greater-alpha}
	G_{t}(1,\prob^*)\geq \alpha.
\end{equation}
We will show \eqref{eq:greater-alpha} $t> L$ by induction.
Let $\lfloor L\rfloor$ be the largest integer smaller or equal to $L$. According to the definition of $L$, we can see that
$$
G_{\lfloor L\rfloor+1}(1,\prob^*) = \expe (V_{\lfloor L\rfloor+1})= 1-(1-\theta)^{\lfloor L\rfloor+1}>\alpha.$$
This proves the base case for the induction.

Assume that for $1\leq s\leq t-1$, $G_{s}(1,\prob^*)>\alpha$. Then,
\begin{equation}\label{eq:gt-1-iter}
	G_{t}(1,\prob^*)
	= \expe\left[V_t|\VV_{t-1}\leq \llll_{t-1}\right]
	=\expe\left[ \expe(V_{t}|X_{1,1:t-1}) \middle|\VV_{t-1}\leq \llll_{t-1}\right],
\end{equation}
where $\llll_{t-1}=(\lambda_1,\cdots,\lambda_{t-1})$.
On the other hand,
\begin{equation}
\begin{split}
		&\expe(V_{t}|X_{1,1:t-1})\\
	=&\expe\left[ \prob\left(\tau_1<t|X_{1,1:t}\right)\middle|X_{1,1:t-1}\right]\\
	= & \prob\left(\tau_1<t|X_{1,1:t-1}\right)\\
	=& \prob\left(\tau_1\leq t-1|X_{1,1:t-1}\right)\\
	= & \delta_{1,t-1}\\
	= & \theta+(1-\theta)V_{t-1},
\end{split}
\end{equation}
where the last two equations are due to Lemma~\ref{lemma:delta_t}. The above display and \eqref{eq:gt-1-iter} give
\begin{equation}
		G_{t}(1,\prob^*)= \expe\left[ \theta+(1-\theta)V_{t-1} \middle|\VV_{t-1}\leq \llll_{t-1}\right] = \theta+(1-\theta) \expe\left[ V_{t-1} |\VV_{t-1}\leq \llll_{t-1} \right].
\end{equation}
By induction assumption, we have
\begin{equation}
	\expe\left[ V_{t-1} |\VV_{t-1}\leq \llll_{t-1} \right]=\alpha.
\end{equation}
The above two equations give
\begin{equation}
	G_{t}(1,\prob^*) = \theta+(1-\theta)\alpha>\alpha.
\end{equation}
This completes our proof.
\end{proof}
{\begin{remark}
A key observation in the above proof is that $\hat{\lambda}_{K, t}=\Lambda_{t}(\prob_{K})$ while $\lambda_t=\Lambda_{t}(\prob^*)$, where $\prob_{K}$ is the empirical measure and $\prob^*$ is the underlying probability measure of the process $\{V_{k,t}\}_{1\leq t\leq t_0}$. Thus, to show that $\hat{\lambda}_{K, t}$ converges to $\lambda_t$ (i.e., $\Lambda_{t}(\prob_{K})$ converges to $\Lambda_{t}(\prob^*)$), it suffices  to show that the functional $\Lambda_t(\cdot)$ is continuous and the empirical measure $\prob_{K}$ converges to $\prob^*$ in some sense as $K\to\infty$. In the proof, the above heuristics are justified through Vapnik-\v{C}hervonenkis (VC) theory. In particular, as a standard result in VC theory, the empirical measure converges to the underlying measure uniformly over the set $\mathcal{C}=\left\{
		(-\infty,\xx]:\xx\in \mathbb{R}^{t_0}
		\right\}$. That is, $\prob_{K}$ converges to $\prob^*$ in  $\|\cdot\|_{\infty}$ norm almost surely. The supporting lemma (Lemma~\ref{lemma:mappings}) is mainly  arguing that the functional of interest is continuous under this norm.

		Moreover,  VC theory and theory of empirical processes in general are helpful in understanding the convergence of empirical measure over general probability spaces. Based on VC theory, many additional results (e.g., convergence rate) can be developed in addition to the uniform convergence result over the set $\mathcal{C}$ mentioned above. We refer the readers to the book \citep{shorack2009empirical} and references therein for a comprehensive review.
\end{remark}

}

\subsection{Proof of Theorem~\ref{thm:large-complete-dependent}}\label{sec:proof-thm:large-complete-dependent}
\begin{customthm}{3}
Suppose that data follow a special case of the model given in Example \ref{example:model-partial} when $\eta = 1$ and
$\tau_0\sim Geom(\theta)$, and Assumption A2 holds.
 Let
	\begin{equation*}
		W_t=\prob\left(\tau_0 < t\Big|X_{k,s},1\leq k\leq K,1\leq s\leq t\right),
	\end{equation*}
	and
	\begin{equation*}
		T=\min\{t: W_t>\alpha\}.
	\end{equation*}
	Then, $\TTp_K=(T,\cdots, T)$.
Moreover, the following asymptotic results hold.
	\begin{enumerate}
		\item $\lim_{K\to\infty} (T-\tau_0) =1$ a.s.,
		\item $\lim_{K\to\infty}\lpcr_{t+1}(\TTp_K)=0$ a.s., 
	\item $ \lim_{K\to\infty} K^{-1}\vert S_{K,t+1}^{\strG}\vert = \ind(\tau_0\geq t)$ a.s.
	\end{enumerate}
\end{customthm}
\begin{proof}[Proof of Theorem~\ref{thm:large-complete-dependent}]
	We first note that under the model considered in this theorem, $W_{1,t}=\cdots=W_{K,t}= \prob(\tau_0<t|\mathcal{F}_t)$. Thus, according to $\TTp$, if $W_{1,t}\leq \alpha$, then $\sum_{k\in S_{t}}W_{t,k}\leq \alpha|S_{t}|$, and $S_{t+1}=S_t$. Moreover, if for some $t$ such that $S_{t}=\{1,\cdots,K\}$ and $W_{1, t+1}>\alpha$, then for any $S\neq\emptyset$, $\sum_{k\in |S|}W_{k,t+1}=W_{k,t+1}|S|>\alpha |S|$, and thus $S_{t+1}=\emptyset$. Thus,  $\TTp=(T,\cdots, T)$. In other words, $S_{t}=\{1,\cdots, K\}$ for $t\leq T$ and $S_{t}=\emptyset$ for $t>T$.

	Note that for $t\leq T$, $\fil_t=\sigma(\{W_{k,s}\}, 1\leq s\leq t, 1\leq k\leq K )$. Let $\tilde{W}_{k,t}=\prob(\tau_0<t|X_{k,s}, 1\leq k\leq K, 1\leq s\leq t)$, which is the conditional probability without deactivating any stream. Then, $W_{k,t}=\tilde{W}_{k,t}$ for $t\leq T$ where we recall $T=\inf\{t: \tilde{W}_{1,t}>\alpha\}$.
	We have
	\begin{equation}\label{eq:w-tilde}
	\begin{split}
		\tilde{W}_{k,t}
		=& \frac{\sum_{s=0}^{t-1}\theta(1-\theta)^s \prod_{r=s+1}^t\prod_{k=1}^K q(X_{k,r})/p(X_{k,r})}{\sum_{s=0}^{t-1}\theta(1-\theta)^s \prod_{r=s+1}^t\prod_{k=1}^K q(X_{k,r})/p(X_{k,r}) + (1-\theta)^{t}}\\
		= &\frac{\sum_{s=0}^{t-1}\theta(1-\theta)^s  \exp\{\sum_{k=1}^Kl_{k,s,t}\}}{\sum_{s=0}^{t-1}\theta(1-\theta)^s  \exp\{\sum_{k=1}^Kl_{k,s,t}\}+(1-\theta)^t},
	\end{split}
	\end{equation}
	where we define $l_{k,s,t}=\sum_{r=s+1}^t\log(q(X_{k,r})/p(X_{k,r}))$.

	For each $u\in \mathbb{Z}_+\cup\{0\}$, let $A_u=\{\tau_0=u\}$. By the strong law of large numbers, under Assumption A2,	
	\begin{equation}\label{eq:W-complte-dependent}
	 	\prob\left(\lim_{K\to\infty}\frac{1}{K}\sum_{k=1}^K l_{k,s,t} =\expe(l_{1,s,t}|\tau_0=u) \Big|A_{u}\right)=1
	\end{equation}
	for each $s,t,u\in \mathbb{Z}_+\cup\{0\}$ with $s<t$. In particular,
	\begin{equation}
		\expe(l_{1,s,t}|\tau_0=u)
		=
		\begin{cases}
			-(t-s)\expe_{Z_1\sim p}\log (p(Z_1)/q(Z_1))<0 & \text{ if } t\leq u\\
			 \expe_{Z_2\sim q} \log(q(Z_2)/p(Z_2))>0&\text{ if } t = u+1 \text{ and } s=u.
		\end{cases}
	\end{equation}
	Thus, for each $s<t\leq u$
	we have
	\begin{equation}\label{eq:LLN-1}
	 	\prob\left(\lim_{K\to\infty}\sum_{k=1}^K l_{k,s,t} = -\infty \Big|A_{u}\right)=1,
	\end{equation}
	and for $t=u+1=s+1$,
	\begin{equation}\label{eq:LLN-2}
			 	\prob\left(\lim_{K\to\infty}\sum_{k=1}^K l_{k,s,t} = \infty \Big|A_{u}\right)=1.
	\end{equation}
	According to \eqref{eq:w-tilde}, \eqref{eq:W-complte-dependent} and \eqref{eq:LLN-1}, we have that for each $t\leq u$
	\begin{equation}
			 	\prob\left(\lim_{K\to\infty}\tilde{W}_{k,t}=0\Big|A_{u}\right)=1.
	\end{equation}
	Moreover, for $t\geq u+1$,
	\begin{equation}
					 	\prob\left(\lim_{K\to\infty}\tilde{W}_{k,t}=1\Big|A_{u}\right)=1.
	\end{equation}
	
	Combining the above two equations for different $u\in \mathbb{Z}_+\cup\{0\}$, we arrive at
	\begin{equation}
					 	\prob\left(\lim_{K\to\infty}\tilde{W}_{k,t} =\ind(t\geq \tau_0+1)\right)=1.
	\end{equation}
	In other words,
	\begin{equation}
		\lim_{K\to\infty}\tilde{W}_{1,t}=\ind(t\geq \tau_0+1) \text{ a.s.}
	\end{equation}

Now we turn to the analysis of $W_{k,t}$ and $S_{t}$ for the proposed procedure. Let $\omega$ be a sample path with $\lim_{K\to\infty}\tilde{W}_{k,t}(\omega)=\ind(t\geq \tau_0(\omega)+1)$ for all $t=1,2,\cdots$. Then, there exists $K_0(\omega)$ large enough such that $\tilde{W}_{1,t}(\omega)<\alpha$  for $t\leq \tau_0(\omega)$ and $\tilde{W}_{1,\tau_0(\omega)+1}(\omega)>\alpha$ for all $K\geq K_0(\omega)$. Then, we have $T(\omega)=\inf\{t: \tilde{W}_{1,t}(\omega)>\alpha\}=\tau_0(\omega)+1$. Note that the set of such sample path $\omega$ has a probability of one.
Thus,
\begin{equation}
	\lim_{K\to\infty} (T-\tau_0)=1 \text{ and } \lim_{K\to\infty}W_{k,t}= 0 \text{ for } t\leq \tau_0 \text{ a.s.}
\end{equation}
This proves the first statement of the theorem. For the second statement, we have
\begin{equation}
	\lim_{K\to\infty} \expe\left(\pcr_{t+1}(\TTp)|\mathcal{F}_t\right)
	=\lim_{K\to\infty}\frac{\sum_{k=1}^K \ind(T> t) W_{k,t}}{\{\sum_{k=1}^K \ind(T> t)\}\vee 1}
	=\lim_{K\to\infty}W_{k,t}\ind(T> t)=0 \text{ a.s.}
\end{equation}
For the third statement, we have
\begin{equation}
	\lim_{K\to\infty}K^{-1}|S_{t+1}|=\lim_{K\to\infty} \ind(T> t)= \ind(\tau_0\geq t) \text{ a.s.}
\end{equation}
\end{proof}
\subsection{Proof of supporting lemmas in Section~\ref{sec:proof-thm:large-sample-iid}}\label{sec:proof-lemma-asymp}
\lemmacontinuousdensity*
\begin{proof}[Proof of Lemma~\ref{lemma:continuous-density}]
Note that the second statement of the lemma is obvious given the first statement, and the third statement is a straightforward application of a combination of the first and second statements. Thus, it suffices to show the first statement of the lemma. In what follows, we prove the first statement by induction.

For $Z_1$ follow the density function $p(\cdot)$, $Z_2$ follows the density function $q(\cdot)$, let $f_1(\cdot)$ and $f_2(\cdot)$ be the density functions of $q(Z_1)/p(Z_1)$ and $q(Z_2)/p(Z_2)$. By Assumption A1, $f_i(z)>0$ for all $z>0$ and $i=1,2$.

For $t=1$, under the model \mhomo{}, $X_{1,1}$ follows the mixture density $(1-\theta)p(\cdot)+\theta q(\cdot)$. Thus, $q(X_{1,1})/p(X_{1,1})$ has the density function $(1-\theta) f_1+\theta f_2$, which is strictly positive and continuous over $\mathbb{R}_+$. Note that
$V_1=\frac{q(X_{1,1})/p(X_{1,1})}{(1-\theta)/\theta+q(X_{1,1})/p(X_{1,1})}$. By standard calculation of density of random variable after transformation, we can see that the density of $V_1$ is
\begin{equation}\label{eq:density-v1}
	f_{V_1}(v)=\frac{c}{(1-v)^2}\left\{(1-\theta) f_1\left(\frac{cv}{1-v}\right)+\theta f_2\left(\frac{cv}{1-v}\right)\right\},
\end{equation}
where $c=(1-\theta)/\theta$. This density function is strictly positive and continuous for $v\in(0,1)$.

Assume the induction assumption that the joint density for $(V_1,\cdots,V_t)$, denoted by $f_{V_1,\cdots,V_t}(v_1,\cdots,v_t)$, is strictly positive and continuous over $(0,1)^{t}$. We proceed to showing $f_{V_{1},\cdots,V_{t+1}}(v_1,\cdots,v_{t+1})$ is strictly positive and continuous over $(0,1)^{t+1}$. Recall that $V_{t+1}=\frac{q(X_{t+1,1})/p(X_{t+1,1})}{(1-\theta)(1-V_t)/(\theta+(1-\theta)V_t)+q(X_{1,1})/p(X_{1,1})}$. With a similar derivation as that for \eqref{eq:density-v1}, we have the conditional density of $V_{t+1}$ given $V_1=v_1,\cdots, V_t=v_t$ is
\begin{equation}
\begin{split}
			&f_{V_{t+1}|V_1=v_1,\cdots, V_t=v_t}(v)\\
			=&\frac{c_t}{(1-v)^2}\left\{(1-\theta_{t}) f_1\left(\frac{c_tv}{1-v}\right)+\theta_t f_2\left(\frac{c_tv}{1-v}\right)\right\},
\end{split}
\end{equation}
where we define $c_t=\frac{(1-\theta)(1-v_t)}{\theta+(1-\theta)v_t}>0$ and
$\theta_t=\prob(\tau_1\leq t|V_1=v_1,\cdots,V_t=v_t)=v_t(1-\theta)+\theta\in(0,1)$. It is easy to see that both $c_t$ and $\theta_t$ are continuous in $v_t$. As a result, $f_{V_{t+1}|V_1=v_1,\cdots, V_t=v_t}(v_{t+1})$ is strictly positive and is continuous in $v_1,\cdots, v_{t+1}$ for $v_1,\cdots, v_{t+1}\in (0,1)$ and so is $f_{V_1,\cdots,V_{t+1}}(v_{1},\cdots,v_{t+1})=f_{V_1,\cdots,V_t}(v_1,\cdots, v_t)f_{V_{t+1}|V_1=v_1,\cdots, V_t=v_t}(v_{t+1})$. This completes our induction and the proof of the lemma.
\end{proof}
\lemmamappings*
\begin{proof}[Proof of Lemma~\ref{lemma:mappings}]
For $t=0,1,\cdots$ and $\lambda<\lambda'$, let $\tV$ be a random variable following the same distribution as $V_{t}|\VV_{t-1}\leq \bLam_{t-1}(\qrob)$. Then, by the definition of conditional expectation, we have
\begin{equation}
 			\begin{split}
 				&G_{t}\left(\lambda',\mathbb{Q}\right)- G_{t}\left(\lambda,\qrob\right)\\
 				= & Z^{-1} \left[
 				\expe_{\qrob}\left(\tV\ind\left\{\tV\leq \lambda'\right\}\right)\qrob\left(\tV\leq \lambda\right)-\expe_{\qrob}\left(\tV\ind\left\{\tV\leq \lambda\right\}\right)\qrob\left(\tV\leq \lambda'\right)
 				\right]\\
 					= &  Z^{-1} \left[
 				\expe_{\qrob}\left(\tV\ind\left\{\lambda<\tV\leq \lambda'\right\}\right)\qrob\left(\tV\leq \lambda\right)-\expe_{\qrob}\left(\tV\ind\left\{\tV\leq \lambda\right\}\right)\qrob\left(\lambda<\tV\leq \lambda'\right)
 				\right]
 				 				 			\end{split} 		
 			\end{equation}
 			where
 				$Z=\qrob\left(\tV\leq \lambda\right)\qrob\left(\tV\leq \lambda'\right)$.  Let $\tV'$ be an independent copy of $\tV$, then the above display implies
 				\begin{equation}
 				 				\begin{split}\label{eq:g-lam-mono-eq}
&G_{t}\left(\lambda',\mathbb{Q}\right)- G_{t}\left(\lambda,\qrob\right)\\
 				= & Z^{-1} \left[
 				\expe_{\qrob}\left(\tV' \ind\left\{\lambda<\tV'\leq \lambda',\tV\leq \lambda\right\}\right)-\expe_{\qrob}\left(\tV\ind\left\{\lambda<\tV'\leq \lambda',\tV\leq \lambda\right\}\right)
 				\right] \\
 				= &Z^{-1}
 				\expe_{\qrob}\left[\left(\tV'-\tV\right) \ind\left\{\lambda<\tV'\leq \lambda',\tV\leq \lambda\right\}\right] 				,
 					\end{split}
 				\end{equation}
Because $\left(\tV'-\tV\right) \ind\left\{\lambda<\tV'\leq \lambda',\tV\leq \lambda\right\}\geq 0$,  $G_{t}\left(\lambda',\mathbb{Q}\right)- G_{t}\left(\lambda,\qrob\right)\geq 0$ from the above display.

In what follows, we use induction to prove the rest of the lemma. Namely, for $\lambda\in (0,1)$, we will prove the following statements for $t=1,2,\cdots,t_0$:
\begin{equation}\label{eq:state-1}
G_t(\lambda,\prob^*) \text{ is strictly increasing in }\lambda;
\end{equation}
\begin{equation}\label{eq:state-2}
	D_{t}(\lambda,\qrob), N_{t}(\lambda,\qrob), \text{ and } G_{t}(\lambda,\qrob) \text{ are sup-norm continuous at } \qrob=\prob^*;
\end{equation}
\begin{equation}\label{eq:state-3}
	\Lambda_{t}(\qrob) \text{ is sup-norm continuous at } \qrob=\prob^*.
\end{equation}

We start with the base case that $t=1$. In this case, the conditional distribution $V_1|\VV_0\leq \bLam_{0}(\qrob)$ is the same as the unconditional distribution of $V_1$ for any $\qrob$. According to Lemma~\ref{lemma:continuous-density}, $V_1$ has a strictly positive and continuous density function over $(0,1)$ under $\prob^*$. Thus, $\prob^*\left(\left(\tV'-\tV\right) \ind\left\{\lambda<\tV'\leq \lambda',\tV\leq \lambda\right\}\geq 0\right)>0$ for $\tV$ and $\tV'$ are identically distributed as $V_1$.
According to \eqref{eq:g-lam-mono-eq},  $G_1(\lambda',\prob^*)-G_1(\lambda,\prob^*)> 0$. That is, $G_1(\lambda,\prob^*)$ is strictly increasing in $\lambda$. This proves the base case for \eqref{eq:state-1}. For \eqref{eq:state-2} and \eqref{eq:state-3} the proof of the base cases is similar to that of the induction given below. Thus, we omit the proof for their base cases here.


Now we assume that \eqref{eq:state-1}, \eqref{eq:state-2}, and \eqref{eq:state-3} hold for $t=1,2,\cdots, s-1$. We proceed to prove these equations for $t=s$. First, note that $V_t|\VV_{t-1}\leq \bLam_{t-1}(\prob^*)$ has a continuous and strictly positive density function over $(0,1)$. Thus, \eqref{eq:state-1} is proved by combining \eqref{eq:g-lam-mono-eq} with similar arguments as those for the base case where $t=1$.

{\bf Proof of \eqref{eq:state-2} for $t=s$.}
By the induction assumption, $\Lambda_1(\qrob),\cdots,\Lambda_{s-1}(\qrob)$ is sup-norm continuous in $\qrob$ at $\qrob=\prob^*$. This implies that $(\lambda,\bLam_{s-1}(\qrob))$, a vector-valued mapping, is also sup-norm continuous in $\qrob$ at $\qrob=\prob^*$. On the other hand, $(\lambda,\bLam_{s-1}(\prob^*))\in(0,1]^s$ by induction assumptions, and $\VV_t$ has a continuous joint probability cumulative function at $(\lambda,\bLam_{s-1}(\prob^*))$ (by Lemma~\ref{lemma:continuous-density}). Combining these results, we can see that  $\prob^*\left(V_{s}\leq\lambda, \VV_{s-1}\leq  \bLam_{s-1}(\qrob)\right)$ is sup-norm continuous at $\qrob=\prob^*$.




Now we analyze the mapping $D_{s}(\lambda,\qrob)=\qrob\left(V_{s}\leq\lambda, \VV_{s-1}\leq \bLam_{s-1}(\qrob)\right)$.
\begin{equation}
\begin{split}
	&\left|D_{s}(\lambda,\qrob)-D_{s}(\lambda,\prob^*)\right|\\
	 =&  \left|\qrob\left(V_{s}\leq\lambda, \VV_{s-1}\leq \bLam_{s-1}(\qrob)\right)-\prob^*\left(V_{s}\leq\lambda, \VV_{s-1}\leq \bLam_{s-1}(\prob^*)\right)\right|\\
	 \leq & \left|\qrob\left(V_{s}\leq\lambda, \VV_{s-1}\leq \bLam_{s-1}(\qrob)\right)-\prob^*\left(V_{s}\leq\lambda, \VV_{s-1}\leq \bLam_{s-1}(\qrob)\right)\right|\\
	 &+\left|\prob^*\left(V_{s}\leq\lambda, \VV_{s-1}\leq  \bLam_{s-1}(\qrob)\right)-\prob^*\left(V_{s}\leq\lambda, \VV_{s-1}\leq \bLam_{s-1}(\prob^*)\right)\right|\\
	 \leq & \|\qrob-\prob^*\|_{\infty}\\
	 &+\left|\prob^*\left(V_{s}\leq\lambda, \VV_{s-1}\leq  \bLam_{s-1}(\qrob)\right)-\prob^*\left(V_{s}\leq\lambda, \VV_{s-1}\leq \bLam_{s-1}(\prob^*)\right)\right|.
\end{split}
\end{equation}
Therefore, 
\begin{equation}
\begin{split}
	 &\limsup_{\|\qrob-\prob^*\|_{\infty}\to 0}\left|D_{s}(\lambda,\qrob)-D_{s}(\lambda,\prob^*)\right|\\
	 =&\lim_{\|\qrob-\prob^*\|_{\infty}\to 0} \|\qrob-\prob^*\|_{\infty} \\
&+ \lim_{\|\qrob-\prob^*\|_{\infty}\to 0} \left|\prob^*\left(V_{s}\leq\lambda, \VV_{s-1}\leq  \bLam_{s-1}(\qrob)\right)-\prob^*\left(V_{s}\leq\lambda, \VV_{s-1}\leq \bLam_{s-1}(\prob^*)\right)\right|\\
	 =&0.
\end{split}
	\end{equation}
	That is, $D_{s}(\lambda,\qrob)$ is sup-norm continuous at $\prob^*$. Moreover, by Lemma~\ref{lemma:continuous-density} and $(\lambda,\bLam_{s-1}(\prob^*))\in(0,1]^s$, we have $D_{s}(\lambda,\prob^*)>0$. This further implies that $D_{s}(\lambda,\qrob)^{-1}$ is also sup-norm continuous at $\prob^*$.

We proceed to the analysis of $N_{s}(\lambda,\qrob)$. We have
\begin{equation}
\begin{split}
		N_{s}(\lambda,\qrob)=&\expe_{\qrob}\left[V_{s}\ind\left\{V_{s}\leq\lambda, \VV_{s-1}\leq \bLam_{s-1}(\qrob)\right\}\right]\\
		= &\expe_{\qrob}\left[\int_{0}^1\ind\{r< V_s\}dr\ind\left\{V_{s}\leq\lambda, \VV_{s-1}\leq \bLam_{s-1}(\qrob)\right\}\right]\\
		 = &\int_{0}^1 \qrob\left(r< V_s\leq \lambda, \VV_{s-1}\leq \bLam_{s-1}(\qrob)\right) dr\\
		 = &  \qrob\left(V_s\leq \lambda, \VV_{s-1}\leq \bLam_{s-1}(\qrob)\right) \\
		& - \int_{0}^{\lambda} \qrob\left(V_s\leq r, \VV_{s-1}\leq \bLam_{s-1}(\qrob)\right) dr\\
		= & D_{s}(\lambda,\qrob)-  \int_{0}^{\lambda} D_{s}(r,\qrob) dr.\label{eq:part-N-1}
\end{split}
\end{equation}
We have already shown that the first term $D_{s}(\lambda,\qrob)$ on the right-hand side of the above display is sup-norm continuous at $\prob^*$. We take a closer look at the second term,
\begin{equation}
	\begin{split}
		&\left|\int_{0}^{\lambda} D_{s}(r,\qrob) dr-\int_{0}^{\lambda} D_{s}(r,\prob^*) dr\right|\\
		\leq & \int_{0}^{\lambda}\left|\qrob\left(V_{s}\leq r, \VV_{s-1}\leq \bLam_{s-1}(\qrob)\right)-\prob^*\left(V_{s}\leq r, \VV_{s-1}\leq \bLam_{s-1}(\qrob)\right)\right| dr\\
	 &+\int_{0}^{\lambda}\left|\prob^*\left(V_{s}\leq r, \VV_{s-1}\leq  \bLam_{s-1}(\qrob)\right)-\prob^*\left(V_{s}\leq r, \VV_{s-1}\leq \bLam_{s-1}(\prob^*)\right)\right| dr\\
	 	\leq & \|\qrob-\prob^*\|_{\infty}\\
	 &+\int_{0}^{\lambda}\left|\prob^*\left(V_{s}\leq r, \VV_{s-1}\leq  \bLam_{s-1}(\qrob)\right)-\prob^*\left(V_{s}\leq r, \VV_{s-1}\leq \bLam_{s-1}(\prob^*)\right) \right|dr.\label{eq:bound-intde}
	\end{split}
\end{equation}
Since $\bLam_{s-1}(\qrob)$ is sup-norm continuous at $\qrob=\prob^*$, for any $\varepsilon>0$, there exists $\delta>0$ such that $\|\qrob-\prob^*\|_{\infty}\leq \delta$ implies $\|\bLam_{s-1}(\qrob)-\bLam_{s-1}(\prob^*)\|\leq \varepsilon$. Then, for each $r\in[0,1]$ and $\|\qrob-\prob^*\|_{\infty}\leq \delta$, $\|(r,\bLam_{s-1}(\qrob))-(r,\bLam_{s-1}(\prob^*))\|\leq \varepsilon$, and
\begin{equation}\label{eq:cdf-bound}
\begin{split}
		&\sup_{\|\qrob-\prob^*\|\leq\delta}\left|\prob^*\left(V_{s}\leq r, \VV_{s-1}\leq  \bLam_{s-1}(\qrob)\right)-\prob^*\left(V_{s}\leq r, \VV_{s-1}\leq \bLam_{s-1}(\prob^*)\right)\right|\\
	\leq &\sup_{\|\vv_{s}-\vv_{s}'\|\leq\varepsilon, \vv_s,\vv_s'\in[0,1]^s}\left|
	\prob^*\left(\VV_s\leq \vv_s\right)-\prob^*\left(\VV_s\leq \vv_s'\right)
	\right|.
\end{split}
\end{equation}
By Lemma~\ref{lemma:continuous-density}, $\VV_s$ has a continuous density function. Thus, its cumulative distribution function, $\prob^*\left(\VV_s\leq \vv_s\right)$, is continuous over $[0,1]^s$. As $[0,1]^s$ is compact, this continuity implies that the cumulative distribution is also uniformly continuous over $[0,1]^s$. That is, for any $\epsilon_1$ small enough, there is $\epsilon>0$, such that
$$\sup_{\|\vv_{s}-\vv_{s}'\|\leq\varepsilon, \vv_s,\vv_s'\in[0,1]^s}\left|
	\prob^*\left(\VV_s\leq \vv_s\right)-\prob^*\left(\VV_s\leq \vv_s'\right)
	\right|\leq\varepsilon_1.$$
Combine the above inequality with \eqref{eq:bound-intde} and \eqref{eq:cdf-bound}, we can see that for any $\varepsilon_1>0$, there is $0<\delta<\varepsilon_1$ such that for $\|\qrob-\prob^*\|_{\infty}\leq \delta$,
\begin{equation}
\begin{split}
	\left|\int_{0}^{\lambda} D_{s}(r,\qrob) dr-\int_{0}^{\lambda} D_{s}(r,\prob^*) dr\right|
		\leq \delta+ \varepsilon_1\leq 2\varepsilon_1.
\end{split}
\end{equation}
Therefore, $\int_{0}^{\lambda} D_{s}(r,\qrob) dr$ is sup-norm continuous at $\qrob=\prob^*$. This result, combined with \eqref{eq:part-N-1}, shows that $N_s(\lambda,\qrob)$ is sup-norm continuous at $\qrob=\prob^*$.

Finally, the sup-norm continuity of $G_{s}(\lambda,\qrob)$ is implied by that of $D_{s}(\lambda,\qrob)^{-1}$ and $N_{s}(\lambda,\qrob)$ for $\lambda\in (0,1]$.

{\bf Proof of \eqref{eq:state-3} for $t=s$.}
Recall $\Lambda_{s}(\mathbb{Q})=\sup\left\{\lambda:G_{s}(\lambda,\mathbb{Q})\leq\alpha\text{ and }\lambda\in[0,1]\right\}.$
We discuss two cases.

{\bf Case 1: $\Lambda_{s}(\prob^*)=1$.}
For any sufficiently small $\varepsilon>0$, by the strict increasing property of $G_{s}(\lambda,\prob^*)$ there exists $\varepsilon_1>0$ such that $G_{s}(\lambda',\prob^*)<G_{s}(\Lambda_{s}(\prob^*),\prob^*)-2\varepsilon_1$ for all $\lambda'\leq\Lambda_{s}(\prob^*)-\varepsilon$. On the other hand, according to the sup-norm continuity of $G_{s}(\Lambda_{s}(\prob^*)-\varepsilon,\qrob)$ at $\qrob=\prob^*$, there exists $\delta>0$ such that $|G_{s}(\Lambda_{s}(\prob^*)-\varepsilon,\qrob)-G_{s}(\Lambda_{s}(\prob^*)-\varepsilon,\prob^*)|\leq \varepsilon_1$ for all $\|\qrob-\prob^*\|_{\infty}\leq \delta$. Then, for all $\|\qrob-\prob^*\|_{\infty}\leq \delta$ and $\lambda'\leq\Lambda_{s}(\prob^*)-\varepsilon$, we have
\begin{equation}
	\begin{split}
		&G_{s}(\lambda',\qrob)\\
		\leq& G_{s}(\Lambda_{s}(\prob^*)-\varepsilon,\qrob)\\
		\leq & G_{s}(\Lambda_{s}(\prob^*)-\varepsilon,\prob^*)+|G_{s}(\Lambda_{s}(\prob^*)-\varepsilon,\qrob)-G_{s}(\Lambda_{s}(\prob^*)-\varepsilon,\prob^*)|\\
		\leq &G_{s}(\Lambda_{s}(\prob^*)-\varepsilon,\prob^*)+\varepsilon_1\\
		\leq &G_{s}(\Lambda_{s}(\prob^*),\prob^*)-\varepsilon_1\\
		\leq & \alpha-\varepsilon_1.
\end{split}
\end{equation}
This implies $1-\varepsilon=\Lambda_{s}(\prob^*)-\varepsilon\leq \Lambda_{s}(\qrob)\leq 1$ for all $\|\qrob-\prob^*\|_{\infty}\leq \delta$.

{\bf Case 2: $\Lambda_{s}(\prob^*)<1$.}
Using similar arguments as those for the Case 1, we arrive at that for any $\varepsilon>0$ there exists $\delta>0$ such that  $\Lambda_{s}(\prob^*)-\varepsilon\leq \Lambda_{s}(\qrob)$ for all $\|\qrob-\prob^*\|_{\infty}\leq \delta$. We proceed to an upper bound of $\Lambda_{s}(\qrob)$.

Note that in this case, $G_{s}(\Lambda_{s}(\prob^*),\prob^*)=\alpha$. According to the definition of $\Lambda_{s}(\prob^*)$, for any $\varepsilon>0$, then there
exists $\varepsilon_1>0$ such that $G_{s}(\lambda',\prob^*)>\alpha+2\varepsilon_1$ for all $\lambda'\geq\Lambda_{s}(\prob^*)+\varepsilon$. On the other hand, according to the sup-norm continuity of $G_{s}(\Lambda_{s}(\prob^*)+\varepsilon,\qrob)$ at $\qrob=\prob^*$, there exists $\delta$ such that $|G_{s}(\Lambda_{s}(\prob^*)+\varepsilon,\qrob)-G_{s}(\Lambda_{s}(\prob^*)+\varepsilon,\prob^*)|\leq \varepsilon_1$ for all $\|\qrob-\prob^*\|_{\infty}\leq \delta$. Then, for all $\|\qrob-\prob^*\|_{\infty}\leq \delta$ and $\lambda'>\Lambda_{s}(\prob^*)+\varepsilon$, we have
\begin{equation}
\begin{split}
		&G_{s}(\lambda',\qrob)\\
		\geq& G_{s}(\Lambda_{s}(\prob^*)+\varepsilon,\qrob)\\
		\geq &\alpha+2\varepsilon_1-|G_{s}(\Lambda_{s}(\prob^*)+\varepsilon,\qrob)-G_{s}(\Lambda_{s}(\prob^*)+\varepsilon,\prob^*)|\\
		\geq & \alpha+\varepsilon_1.
\end{split}
\end{equation}
This implies that for $\lambda'>\Lambda_{s}(\prob^*)+\varepsilon$ and $\|\qrob-\prob^*\|_{\infty}\leq \delta$, $G_{s}(\lambda',\qrob)>\alpha$. Thus,  $\Lambda_{s}(\qrob)\leq \Lambda_{s}(\prob^*)+\varepsilon$ for $\|\qrob-\prob^*\|_{\infty}\leq \delta$. Combining the upper bound and lower bound of $\Lambda_{s}(\qrob)$, we arrive at 
\begin{equation}
	|\Lambda_{s}(\qrob)-\Lambda_{s}(\prob^*)|\leq\varepsilon
\end{equation}
for $\|\qrob-\prob^*\|_{\infty}\leq \delta$.

This completes the proof of \eqref{eq:state-3}.

Finally, we show $\Lambda_{t}(\prob^*)>0$.
This is true because  $G_{t}(\lambda,\prob^*)$ is continuous and strictly increasing in $\lambda$ and $\lim_{\lambda\to 0+} G_{t}(\lambda,\prob^*)=0<\alpha$. 

\end{proof}
\section{Calculations for Example~\ref{example:noopt}}
We start with calculating $\prob\left(\tau_k=0|X_{k,1}=x_{k,1},\cdots, X_{k,t}=x_{k,t}\right)$. Let $t_1=t_4=3$ and $t_2=t_3=1$. Under the model specified in the example, we have $\tau_k=0$ or $\tau_k=t_k$ a.s. for $k=1,\cdots, 4$. As a result, we have
\begin{equation}
	\prob(\tau_k\leq t-1|X_{k,1}=x_{k,1},\cdots, X_{k,t}=x_{k,t}) = 1
\end{equation}
for $t\geq t_k+1$.

To simplify the calculation for the other cases, we  first prove the following auxiliary result:
under the model specified in this example,
for any $x_{k,1},\cdots,x_{k,t}\in \{0,1\}$ and $0\leq t \leq t_k$,
\begin{equation}\label{eq:best-and-worst-case}
\begin{split}
		&\prob\left(\tau_k\leq t-1|X_{k,1}=0,\cdots, X_{k,t}=0\right)\\
		\leq & \prob\left(\tau_k\leq t-1|X_{k,1}=x_{k,1},\cdots, X_{k,t}=x_{k,t}\right)\\
		\leq & \prob\left(\tau_k \leq t-1|X_{k,1}=1,\cdots, X_{k,t}=1\right).
\end{split}
\end{equation}
Indeed, direct calculation gives
\begin{equation}\label{eq:all-case}
\begin{split}
		&\prob\left(\tau_k\leq t-1|X_{k,1}=x_{k,1},\cdots, X_{k,t}=x_{k,t}\right)\\
		= & \frac{\prob(\tau_k=0)(0.51)^{\sum_{s=1}^t x_{k,t}}(0.49)^{t-\sum_{s=1}^t x_{k,t}}}{\prob(\tau_k=0)(0.51)^{\sum_{s=1}^t x_{k,t}}(0.49)^{t-\sum_{s=1}^t x_{k,t}} + \prob(\tau_k=t_k)(0.5)^t }.
\end{split}
\end{equation}
The above display is monotonically increasing in $\sum_{s=1}^t x_{k,t}$. Thus, \eqref{eq:best-and-worst-case} is proved.

Let $\tilde{W}_{k,t}:=\prob\left(\tau_k\leq t-1|X_{k,1}=x_{k,1},\cdots, X_{k,t}=x_{k,t}\right)$. Using \eqref{eq:best-and-worst-case} and \eqref{eq:all-case}, we obtain that for $0\leq t\leq t_k$,
\begin{equation}
 \tilde{W}_{k,t}\in\left[\frac{\prob(\tau_k=0)(0.49)^{t}}{\prob(\tau_k=0)(0.49)^{t} + \prob(\tau_k=t_k)(0.5)^t }, \frac{\prob(\tau_k=0)(0.51)^{t}}{\prob(\tau_k=0)(0.51)^{t} + \prob(\tau_k=t_k)(0.5)^t }\right].
\end{equation}
Plugging  $\prob(\tau_k=0)$ and $\prob(\tau_k=t_k)=1-\prob(\tau_k=0)$ into the above equations, we obtain that 		$\tilde{W}_{k,t}=1$ for $t\geq 4$, and for $0\leq t\leq 3$, the a.s. range of $\tilde{W}_{k,t}$s are given below (numbers are rounded to the third decimal place).
$$
\begin{tabu}{c|ccc}
\hline
\tilde{W}_{k,t}\in & t=1  & t=2 & t=3   \\ \hline
k=1                                            & [0.098,0.102]  & [0.096,0.104] & [0.095,0.105]   \\
k=2                                            & [0.395,0.405]  & \{1\}   & \{1\}    \\
k=3                                            & [0.425,0.435] &  \{1\}   & \{1\} \\
k=4                                            & [0.545,0.555] & [0.540,0.560]   & [0.535,0.565] \\ \hline
\end{tabu}
$$
With these numbers, the following inequalities can be verified.
\begin{equation}
\begin{split}
		\tilde{W}_{1,1}<\alpha< \tilde{W}_{2,1}<\tilde{W}_{3,1}<\tilde{W}_{4,1},\\
		\frac{1}{3}(\tilde{W}_{1,1}+ \tilde{W}_{2,1}+\tilde{W}_{3,1})\leq 0.314 <\alpha = 0.34,\\
		\frac{1}{3}(\tilde{W}_{1,1}+ \tilde{W}_{2,1}+\tilde{W}_{4,1})\geq 0.346 >\alpha\\
				\frac{1}{2}(\tilde{W}_{1,1}+ \tilde{W}_{4,1})\leq 0.329 <\alpha.
\end{split}
\end{equation}
The above inequalities implies that $\expe[\pcr_2(\TT)|\fil_1]\leq \alpha$ is equivalent to
\begin{equation}
	S_2\in\big\{
\{1,2,3\},\{1,2\},\{1,3\},\{1,4\},\{1\},\emptyset
	\big\}.
\end{equation}
 Now we consider $S_3$. We can verify the following inequalities.
\begin{equation}
 	\begin{split}
 	\tilde{W}_{1,2}<\alpha<\tilde{W}_{4,2}<\tilde{W}_{2,2}=\tilde{W}_{3,2},\\
 	\frac{1}{2}(\tilde{W}_{1,2}+ \tilde{W}_{2,2})=\frac{1}{2}(\tilde{W}_{1,2}+ \tilde{W}_{3,2})\geq 0.548 >\alpha\\
 		\frac{1}{2}(\tilde{W}_{1,2}+ \tilde{W}_{4,2})\leq 0.332 <\alpha.
 	\end{split}
 \end{equation}
The above inequalities implies that $\expe[\pcr_3(\TT)|\fil_2]\leq \alpha$ is equivalent to that $S_3\subset S_2$ and
$$
S_3\in\big\{
\{1,4\}, \{1\},\emptyset
\big\}.
$$
Similarly, for $S_4$, we have
\begin{equation}
 	\begin{split}
 	\tilde{W}_{1,3}<\alpha<\tilde{W}_{4,3}<\tilde{W}_{2,3}=\tilde{W}_{3,3},\\
 	\frac{1}{2}(\tilde{W}_{1,2}+ \tilde{W}_{2,2})=\frac{1}{2}(\tilde{W}_{1,2}+ \tilde{W}_{3,2})\geq 0.547 >\alpha\\
 		\frac{1}{2}(\tilde{W}_{1,2}+ \tilde{W}_{4,})\leq 0.336 <\alpha.
 	\end{split}
\end{equation}
This implies that $\expe[\pcr_4(\TT)|\fil_3]\leq \alpha$ is equivalent to that $S_4\subset S_3$ and
$$
S_4\in\big\{
\{1,4\}, \{1\},\emptyset
\big\}.
$$
Finally, since $\tilde{W}_{k,t}=1$ for all $t\geq 4$ and $k=1,\cdots,4$, we obtain $S_t=\emptyset$ for $t\geq 5$.

Enumerating all the index sets satisfying the constraint, we obtain that $\sup_{\TT\in\adset_{\alpha}}\expe(\uti_{2}(\TT))=7$ and the maximum achieved if and only if $S_1=\{1,2,3,4\}$ and $S_2=\{1,2,3\}$. In addition, $\sup_{\TT\in\adset_{\alpha}}\expe(\uti_{4}(\TT))=10$ and the maximum is achieved if and only if $S_1=\{1,2,3,4\}$, $S_2 = \{1,4\}$, $S_3=\{1,4\}$ and $S_4=\{1,4\}$. However, these two maxima cannot be achieved at the same time as they require different choices of $S_2$.

\bibliography{SeqChange,coupling}
\bibliographystyle{apalike}
\end{document}